\tikzstyle{arc}=[->, shorten <=3pt, shorten >=3pt, >=stealth, line width=1.5pt]
\tikzstyle{edge}=[shorten <=2pt, shorten >=2pt, >=stealth, line width=1.5pt]
\tikzstyle{vertex}=[circle, fill=white, draw, minimum size=6pt, inner sep=0pt,
\providecommand{\keywords}[1]
{
    \vspace{0.5em}
  \small	
  \textbf{Key words.} #1
}
\providecommand{\AMS}[1]
{
    \vspace{1em}
  \small	
  \textbf{AMS subject classifications.} #1
}
\newtheorem{theorem}{Theorem}
\newtheorem{definition}{Definition}
\newtheorem{conjecture}{Conjecture}
\newtheorem{lemma}[theorem]{Lemma}
\newtheorem{corollary}[theorem]{Corollary}
\newtheorem{proposition}[theorem]{Proposition}
\newtheorem{observation}[theorem]{Observation}
\newtheorem{example}[theorem]{Example}
\newcommand{\red}[1]{#1}
\DeclareMathOperator{\Aut}{Aut}
\DeclareMathOperator{\minority}{minority}
\DeclareMathOperator{\majority}{majority}
\DeclareMathOperator{\NP}{NP}
\DeclareMathOperator{\Csp}{CSP}
\DeclareMathOperator{\Sym}{Sym}
\DeclareMathOperator{\sw}{sw}
\newcommand{\bA}{{\mathfrak A}}
\newcommand{\bB}{{\mathfrak B}}
\newcommand{\bC}{{\mathfrak C}}
\newcommand{\bR}{{\mathfrak R}}
\newcommand{\bH}{{\mathfrak H}}
\newcommand{\mi}{\boldsymbol{-}}
\title{Forbidden Tournaments \\
and the Orientation Completion Problem\thanks{Both authors have been funded by the European Research Council (Project POCOCOP, ERC Synergy Grant 101071674). Views and opinions expressed are however those of the authors only and do not necessarily reflect those of the European Union or the European Research Council Executive Agency. Neither the European Union nor the granting authority can be held responsible for them.}}
\author[1]{Manuel Bodirsky\thanks{manuel.bodirsky@tu-dresden.de}}
\author[1]{Santiago Guzm\'an-Pro\thanks{santiago.guzman\_pro@tu-dresden.de}}
\affil[1]{Institut f\"ur Algebra, TU Dresden}
\begin{document}
\date{}

\maketitle

\begin{abstract}
   For a fixed finite set of finite tournaments ${\mathcal F}$, the \emph{${\mathcal F}$-free orientation problem} asks whether a given finite undirected graph $G$ has an \emph{$\mathcal F$-free orientation}, i.e., whether the edges of $G$ can be oriented so that the resulting digraph does not embed any of the tournaments from ${\mathcal F}$. We prove that for every ${\mathcal F}$, this problem is in P or NP-complete. Our proof reduces the classification task to a complete complexity classification of the \emph{orientation completion problem} for ${\mathcal F}$, which is the variant of the problem above  where the input is a directed graph instead of an undirected graph, introduced by Bang-Jensen, Huang, and Zhu (2017). 
   Our proof uses results from the theory of constraint satisfaction, and a result of Agarwal and Kompatscher (2018) about infinite permutation groups and transformation monoids. 
\end{abstract}

\keywords{Orientations with forbidden patterns, homomorphisms, infinite graphs, computational complexity}

\AMS{05C20, 05C60, 05C63, 05C85, 03C98}



\section{Introduction}

 
For a fixed finite set of finite oriented graphs ${\mathcal F}$, the \emph{${\mathcal F}$-free
orientation problem} asks whether a given finite undirected graph $G$ has an \emph{$\mathcal F$-free
orientation}, i.e., whether the edges of $G$ can be oriented so that the resulting digraph does not
contain any $F\in \mathcal F$ as an induced oriented graph. This problem was first studied
from a structural perspective: Skrien~\cite{skrienJGT6} proposed structural characterisations of
graphs that admit an $\mathcal F$-free orientation, where $\mathcal F$ is a fixed set of oriented
paths on $3$-vertices. The most notorious result in this direction that a connected graph
$G$ is a proper circular-arc graph if and only if it admits a $\{(\{1,2,3\},\{(1,2),(1,3)\}),
(\{1,2,3\},\{(2,1),(3,1)\})\}$-free orientation~\cite{skrienJGT6}.

From an algorithmic perspective, 
the $\mathcal F$-free
orientation problem can be easily reduced to $2$-SAT if $\mathcal F$ is a set of oriented
paths on $3$ vertices~\cite{bangjensenJCTB59}. Later, in \cite{GuzmanProForbiddenPatterns}
the authors extended the previous observation, and showed that for several sets of oriented
graphs on $3$ vertices the $\mathcal F$-free orientation problem is in P --- leaving only
open the symmetric cases \begin{align*}
\mathcal F & = \{(\{1,2,3\},\{(1,2),(1,3)\}), (\{1,2,3\},\{(1,2),(2,3),
(3,1)\})\} \\
\text{ and } \mathcal F & = \{(\{1,2,3\},\{(2,1),(3,1)\}), (\{1,2,3\},\{(1,2),(2,3), (3,1)\})\}.
\end{align*}
On the other hand, the Roy-Gallai-Hasse-Vitaver Theorem~\cite{gallaiPCT,
hasseIMN28, vitaverDAN147, royIRO1} implies that the $k$-colouring problem can be
interpreted as an $\mathcal F$-free orientation problem, i.e., there is a finite set of 
oriented graphs $\mathcal F_k$ such that a graph $G$ is $k$-colourable if and only if it
admits an $\mathcal F_k$-free orientation. This yields natural instances of NP-hard 
$\mathcal F$-free orientation problems.

Recently, a thorough study of problems that stem from orientation problems
was initiated by Bang-Jensen, Huang, and Zhu~\cite{bangjensenJGT87}.
For a fixed class of oriented graphs $\mathcal{C}$, the \textit{orientation
completion problem} asks whether a partially oriented  graph $G$ can be completed
to an oriented graph in $\mathcal{C}$ by orienting its non-oriented edges. In
particular, given a finite set of oriented graphs $\mathcal F$, the \emph{$\mathcal F$-free
orientation completion problem} generalises the $\mathcal F$-free orientation problem.
In \cite{bangjensenJGT87}, the authors study the complexity of the orientation
completion problem for several classes of digraphs such as in-tournaments, local 
tournaments, and locally transitive tournaments.  They show that for each of these classes,
the orientation completion problem is in P or $\NP$-complete.

In this work, we prove that for any finite set of finite tournaments ${\mathcal F}$, the
$\mathcal F$-free orientation problem is in P or NP-complete. Our proof reduces the
classification task to the $\mathcal F$-free orientation
completion problem, for which we then provide a complete complexity classification. This
intermediate result is interesting in its own right,  addressing the study initiated
by Bang-Jensen, Huang, and Zhu~\cite{bangjensenJGT87}.

The rest of this work is structured as follows. In \cref{sect:MT}, we introduce
the necessary model theoretic background for this work. Similarly, in \cref{sect:CSP},
we provide context for the theory of constraint satisfaction. In \cref{sect:dicho-oc},
we prove that for each finite set of tournaments $\mathcal F$, there is a Boolean structure
whose CSP
is polynomial-time equivalent to the $\mathcal F$-free orientation problem. This yields
a dichotomy for the $\mathcal F$-free orientation completion problem, and moreover,
we propose a complete classification of the complexity of these problems in terms of
$\mathcal F$ and the $\mathcal F$-free tournaments. In \cref{sect:dicho-o},
we build on the previous classification to classify the complexity of the 
$\mathcal F$-free orientation problem. We will do so by using a result of Agarwal and
Kompatscher~\cite{AgarwalKompatscher} about infinite permutation groups and transformation monoids
which we introduce in \cref{sect:sym}.
Finally, in Section~\ref{sect:examples} we present our classification from \cref{sect:dicho-oc,sect:dicho-o} in graph theoretic terms,
and propose some applications which we believe are interesting in their own right to graph theorists. Moreover, these examples can help readers that are less familiar with constraint satisfaction techniques to gain intuition before reading the technical proofs of this paper --- readers with a model theory or constraint satisfaction background and motivation can skip this section.

In the remaining of this section we first extend the motivation of our work, by mentioning
some relations of the $\mathcal F$-free orientation  problem to previously studied problems
in graph theory, finite model theory, constraint satisfaction theory, and infinite model theory. We conclude this section by
introducing the formal setting under which we study the orientation and orientation completion problems. 

\subsection{Related Work}

Similar to $\mathcal F$-free orientation problems, Damaschke~\cite{damaschke1990} considers
characterisations of graph classes by means of finitely many forbidden ordered graphs. For
instance, if $(P_3,\le)$ is the linear ordering of $P_3 = (\{1,2,3\},\{12,13\})$ where $1\le 2\le 3$,
then a graph $G$ is chordal if and only if it admits a
$(P_3,\le)$-free 
linear ordering~\cite{damaschke1990}.  Shortly after, Duffus, Ginn, and R\"odl~\cite{duffusRSA7} 
consider the complexity classification of the following ordering problem: given a linearly ordered graph
$(G,\le)$, decide if an input graph $H$ admits a $(G,\le)$-free linear ordering. They showed that
for almost all $2$-connected graphs $G$, the  $(G,\le)$-free ordering is NP-complete, and
conjectured that this is the case for all $2$-connected graphs unless $G$ is a clique. Similar
problems have been considered for circular orderings~\cite{guzmanAMC438}, and for so-called
tree-layouts of graphs~\cite{paulPREPRINT} 
(for a comprehensive  study of such problems from a structural perspective see~\cite{guzmanPHD}).
Here we initiate a parallel study to the one started by Duffus, Ginn, and 
R\"odl~\cite{duffusRSA7} by considering  orientation instead of ordering problems.

Both the ${\mathcal F}$-free orientation problem and the ${\mathcal F}$-free orientation completion problem can be viewed as special cases of a significantly larger class of computational problems, namely the class of problems that can be expressed in the logic \emph{MMSNP$_2$}. In the context of digraphs, a computational problem expressed in 
MMSNP$_2$ asks ``given a digraph $D$,  is there an edge colouring of $D$ that avoids 
some fixed finite set of edge-coloured digraphs?''.
This logic relates to Feder and Vardi's famous logic \emph{MMSNP} (for \emph{monotone monadic strict NP}) as Courcelle's logic MSO$_2$ relates to MSO~\cite{BarsukovThesis}. It has the same expressive power as \emph{guarded disjunctive Datalog}~\cite{OBDA}, which is a formalism studied in database theory.
It can be shown that every CSP in MMSNP$_2$ can be expressed as a Constraint Satisfaction Problem (CSP) for a  reduct of a finitely bounded homogeneous structure~\cite{BodirskyKnaeuerStarke}, and hence falls into the scope of the so-called \emph{tractability conjecture}~\cite{BPP-projective-homomorphisms}. This conjecture
states that such CSPs 
are in P or NP-complete, and even provides a mathematical condition 
to describe the boundary between the cases in P and the NP-complete cases.
This condition has numerous equivalent characterisations~\cite{Book,BartoPinskerDichotomy,BKOPP-equations}, but despite recent progress~\cite{MottetPinskerSmooth} the tractability conjecture for MMSNP$_2$ is still wide open. 
In contrast, the P versus NP-complete dichotomy is true for MMSNP~\cite{FederVardi,Kun}
(using the complexity dichotomy for finite-domain CSPs~\cite{ZhukFVConjecture,BulatovFVConjecture}), and even the tractability conjecture has been verified in this case~\cite{MMSNP-Journal}.
The graph orientation problems studied here cannot be expressed in MMSNP, but it is straightforward to formulate them in MMSNP$_2$. Our result not only shows a complexity dichotomy for the ${\mathcal F}$-free orientation problems, but verifies the tractability conjecture for this subclass of MMSNP$_2$.



\subsection{Formal Setting}
\label{sect:setting} 

This work lies in the intersection of graph theory, model theory, and computational
complexity. For this reason, we begin by carefully introducing basic notation
and nomenclature; in this section, we start with terminology from graph theory.
A \textit{digraph} $D$ consists of a vertex set $V(D)$ and a
binary relation $E(D) \subseteq V(D)^2$. The elements of $E(D)$ are called edges,
and whenever there are vertices $x,y\in V(D)$ such that $(x,y) \in E(D)$ and $(y,x) \in E(D)$,
we write $xy\in E(D)$, and we say that $xy$ is a \textit{symmetric edge} of $D$ 
--- notice that in graph theory an edge might refer to a symmetric edge in this context. Most digraphs considered
here will be \emph{loopless}, i.e., they do not contain edges of the form $(x,x)$ for $x \in V(D)$.
Whenever the digraph is clear from the context, we will simply write $V$ for $V(D)$, and
$E$ for $E(D)$.

A (loopless) \textit{graph} $G$ will be viewed as a (loopless) digraph where 
$E(G)$ is a symmetric relation, and an \emph{oriented graph} $O$ is a digraph where
$E(O)$ is an anti-symmetric relation (i.e., none of the edges in $E(O)$ is symmetric).
It will be convenient to denote by $U$ the
relation $E\cup E^{-1}$, i.e, given a digraph $D$ we have
$(x,y) \in U$ if and only if $(x,y)\in E(D)$ or $(y,x)\in E(D)$.
The \textit{underlying graph} of a digraph $D$ is the graph $u(D)$ with vertex set
$V(D)$ and edge set $U(D) = U$. The relation $U$ will be very important and highly
used in this work so, when $D$ is clear from context
we will write $U$ for $U(D)$. Given a digraph $D$, we will write
$(V,U)$ for the underlying graph of $U$, and $(D,U)$ for the structure that contains
both the oriented edges and the edge relation of the underlying graph.

 An \textit{orientation} of a graph $G$ is an oriented graph
$G'$ such that $u(G') = G$. A \textit{tournament} is an orientation of the complete
graph $K_n$ with $n$ vertices, for some $n \geq 1$, and a graph is \textit{semicomplete} if its underlying graph is complete. We denote by $T_n$ the 
\textit{transitive tournament} on $n$ vertices, and by $\overrightarrow{C}_n$ the
directed cycle on $n$ vertices.

Given digraphs $D$ and $H$ we say that $D$ is a \textit{subdigraph} of $H$
if $V(D)\subseteq V(H)$ and $E(D)\subseteq E(H)$.
 We say that $D$ is 
\textit{spanning} subdigraph of $H$ if $V(D) = V(H)$. In particular,
if $G'$ is an orientation of a graph $G$, then $G'$ is a spanning subdigraph
of $G$.  
A \textit{homomorphism}
$\varphi\colon D\to H$ is a function $\varphi
\colon V(D)\to V(H)$ such that if $(x,y)\in E(D)$, then $(\varphi(x),
\varphi(y)) \in E(H)$ --- this notion naturally generalises to homomorphisms
of relational structures, which we introduce later. If such a homomorphism exists, 
we write $D\to H$, otherwise we write $D\not\to H$. An \textit{embedding} is 
an injective homomorphism $\varphi\colon D\to H$ such that $(x,y)\in E(D)$ if and
only if $(\varphi(x), \varphi(y)) \in E(H)$. Notice that if $H$ is an oriented graph
and $T$ is a tournament, then every homomorphism $\varphi\colon T\to H$ is an
embedding. Finally, given a set of digraphs $\mathcal F$, we say that a digraph
$D$ is $\mathcal F$\textit{-free} if there is no embedding $\varphi\colon F\to D$ for
any $F\in \mathcal F$. We now formalize the orientation and orientation completion
problems in the setting described above.

    {\vspace{1em} \noindent \scshape  $\mathcal F$-free orientation problem\par}%
    \begin{itemize}\vspace{-0.5em}
        \item  Input:  a finite graph $G$;
        \item Question: is there an $\mathcal F$-free  orientation $G'$ of $G$?
     \end{itemize}

     {\vspace{0.5em} \noindent \scshape  $\mathcal F$-free orientation completion problem\par}%
     
    \begin{itemize}\vspace{-0.5em}
        \item Input: a finite digraph $D$;
     
     \item  Question: is there a spanning subdigraph $D'$ of $D$ such that
     $D'$ is an $\mathcal F$-free oriented graph?
     \end{itemize}
    
Clearly, for every fixed set of oriented graphs  ${\mathcal F}$ both the ${\mathcal F}$-free orientation and the
${\mathcal F}$-free orientation completion problem are in the complexity class NP.
As previously mentioned, this work studies the $\mathcal F$-free orientation (completion)
problem when $\mathcal F$ is a set of tournaments.
Given a tournament $T$, we will often write $T$-free orientation (completion)
problem instead of $\{T\}$-free orientation problem. For instance, since every
graph admits an acyclic orientation, the $\overrightarrow{C_3}$-free orientation
problem is trivial and polynomial-time tractable. On the contrary, the $\overrightarrow{C_3}$-free
orientation completion problem is NP-complete (\cref{cor:3-vertices-or-com}).
Note that if ${\mathcal F}$ contains a tournament with only one vertex, then there is no 
${\mathcal F}$-free oriented graph with a non-empty set of vertices, and hence both of the 
computational problems above are trivial. So we tacitly assume from now on that all tournaments in 
${\mathcal F}$ have at least two vertices.

\section{Model Theory Set Up}
\label{sect:MT}
Relational structures generalise digraphs
and allow multiple relations with relations of arbitrary finite arity. They are a natural tool in our study of graph problems. 
To define them formally, we need the concept of a \emph{relational signature}, which is a set $\tau$ of relation symbols $R,S,\dots$, each equipped with an arity $k \in {\mathbb N}$. A $\tau$-structure $\bA$ consists of a set $A$ (the domain) and for each $R \in \tau$ of arity $k$ a relation 
$R^{\bA} \subseteq A^k$ (in general, for structures  $\bA,\bB,\bC\dots$ we will denote by $A,B,C,\dots$ their corresponding domains). 
Clearly, a digraph $D$ (and hence also a graph) can be viewed as a structure with domain $V(D)$ and the signature $\{E\}$ where $E$ is a binary relation symbol that 
denotes the edge relation $E(D)$ of the graph. 

A \emph{substructure} of $\tau$-structure $\bA$ is a $\tau$-structure $\bB$ such that 
for every $R \in \tau$ of arity $k$ we have $R^{\bB} = R^{\bA} \cap B^k$; however, note that a substructure of a digraph when viewed as a structure corresponds to \emph{induced} subgraphs in graph theory, rather than subgraphs as introduced earlier. 
The \emph{union} of two $\tau$-structures $\bA$ and $\bB$ is the $\tau$-structure $\bC$ with domain $C := A \cup B$ and the relation $R^{\bC} := R^{\bA} \cup R^{\bB}$ 
for every $R \in \tau$. 
If $\bA$ is a $\tau$-structure and $\bA'$ is a $\sigma$-structure with the same domain, for $\sigma \subseteq \tau$,
and $R^{\bA} = R^{\bA'}$ for every 
$R \in \sigma$, 
then $\bA'$ is called a \emph{reduct} of $\bA$,
and $\bA$ is called a \emph{expansion} of $\bA'$. 

A \emph{homomorphism} between two $\tau$-structures $\bA$ and $\bB$ is a function $h \colon A \to B$ such that for every $R \in \tau$ of arity $k$ we have $a = (a_1,\dots,a_k) \in R^{\bA} \Rightarrow f(a) := (f(a_1),\dots,f(a_k)) \in R^{\bB}$. 
The \emph{constraint satisfaction problem} of a $\tau$-structure $\bB$ is the class of all finite $\tau$-structures $\bA$ with a homomorphism to $\bB$; for fixed $\bB$ of finite relational signature $\tau$, this class can be viewed as a computational problem, where the input consists of an arbitrary finite $\tau$-structure $\bA$, and the question is to decide whether $\bA \in \Csp(\bB)$. For example, $\Csp(K_3)$ can be viewed as the famous graph 3-colouring problem. 
Two structures are called \emph{homomorphically equivalent} if there are \red{homomorphisms} between the structures in both ways. Clearly, homomorphically equivalent structures have the same CSP. 

An \emph{endomorphism of $\bA$} is a homomorphism from $\bA$ to $\bA$. The set of all endomorphisms of a structure $\bA$ forms a transformation monoid.
It is well-known that a transformation monoid $M$ is an endomorphism monoid of a relational structure if and only if it is \emph{locally closed}, i.e., if $f \colon A \to A$ is such that for every $n \in {\mathbb N}$ and $a \in A^n$ there exists $g \in M$ such that $f(a) = g(a)$, then $\red{f} \in M$;
 these are the closed sets of the topology of \emph{pointwise convergence}, which is the product topology on $A^A$ where the \red{topology} on $A$ is taken to be discrete. 

A homomorphism is called \emph{strong} if the implication $\Rightarrow$ in the definition of homomorphisms is replaced by an equivalence $\Leftrightarrow$. 
An \emph{embedding} is an injective strong homomorphism.  A structure $\bA$ is called a \emph{core} if all endomorphisms of $\bA$ are embeddings. 
An \emph{isomorphism} is a bijective embedding. An \emph{automorphism}
of a structure $\bA$ is an isomorphism of $\bA$ with itself. Note that if $\bA$ is a finite core structure, then all endomorphisms of $\bA$ are automorphisms; this statement is false for general infinite structures $\bA$. 

A structure is called \emph{homogeneous} if every isomorphism between finite substructrues can be extended to an automorphism. 
It is a well-known fact that a permutation group is the automorphism group of a relational structure $\bA$ if and only if it is closed with respect to the restriction of the topology on $A^A$ above to the set of all permutations, which we denote by $\Sym(A)$. 
If $P$ is a set of permutations, we write $\langle P \rangle$ for the smallest permutation group that contains $P$ and is closed in $\Sym(A)$. 
\red{Homogeneous structures with finite relational signature are known to be \emph{$\omega$-categorical}, i.e., all countable model of their first-order theory are isomorphic~\cite{Hodges}.}

In our study of graph orientation problems, it will be convenient to pass from classes of finite structures to a single countably infinite structure, using Fra\"iss\'e theory. We do not need the full power of the theory, but exclusively work with the following fact. 

\begin{theorem}[see, e.g.,~\cite{Hodges}]
\label{thm:Fraisse}
Let $\tau$ be a finite relational signature. 
Let $\mathcal C$ be a class of finite $\tau$-structures which is closed under
substructures, isomorphisms, and unions.
Then there exists a countably infinite homogeneous $\tau$-structure $\bA$ such that $\mathcal C$ equals the class of finite $\tau$-structures that have an embedding into $\bA$. The structure $\bA$ is unique up to isomorphism, and called the \emph{Fra{\"i}ss\'e-limit} of ${\mathcal C}$.  
\end{theorem}

\begin{example}\label{expl:Rado}
    Let $\mathcal C$ be the class of all 
    finite graphs. 
    Then ${\mathcal C}$ satisfies the assumptions of Theorem~\ref{thm:Fraisse}, and the Fra{\"i}ss\'e-limit of $\mathcal C$ is called the \emph{Rado graph}, which we denote by $\bR$. 
    For $n \geq 2$, the \emph{Henson graph $\bH_n$} is the Fra{\"i}ss\'e-limit
    of the class $\mathcal C$ of all 
    finite graphs that do not embed $K_n$. 
\end{example}

A class $\mathcal C$ of finite $\tau$-structures is called \emph{finitely bounded} if there exists a finite set ${\mathcal F}$ of finite $\tau$-structures such that $\bA \in {\mathcal C}$ if and only if there is no structure in ${\mathcal F}$ which embeds into $\bA$; then \red{we} refer to the elements of ${\mathcal F}$ as the \emph{bounds} for ${\mathcal C}$.
We say that a structure $\bB$ is \red{\emph{finitely bounded}} if the class of all finite structures $\bA$ that embeds into $\bB$ is finitely bounded. 

\begin{example}\label{expl:Henson}
    The Rado graph $\bR$ and the Henson graphs are finitely bounded. For the Rado graph, the bounds contain the one-vertex loop graph, and the two-vertex graph with a single \red{directed} edge (which forces the edge relation of the Fra{\"i}ss\'e-limit to be loopless and symmetric). 
\end{example}

We now turn to definitions that are specific for our study of graph orientation problems.
Let $\mathcal F$ be a finite set of finite tournaments. Then the class of all finite 
${\mathcal F}$-free oriented graphs satisfies the conditions from Theorem~\ref{thm:Fraisse}
(here we use our general assumption that all tournaments in ${\mathcal F}$ have at least two vertices), 
and hence has a countably infinite homogeneous Fraiss\'e-limit $D_{\mathcal F} = (V;E)$. 
Note that $D_{\mathcal F}$ is finitely bounded: as bounds we take the structures in ${\mathcal F}$ and
additionally the loop digraph and the two-element digraph which contains a symmetric edge. Clearly,
$\Csp(D_{\mathcal F})$ is in P since it suffices to check whether \red{the} given directed graph contains
one of the tournaments from ${\mathcal F}$ as a subgraph, which can be tested in polynomial time (where
the degree of the polynomial is bounded by the maximal number of elements of the structures in ${\mathcal F}$).

The infinite structures $D_\mathcal F$ turn out to be closely related to $\mathcal F$-free
orientation (completion) problems via the following
expansions and reducts.
    
\begin{itemize}
    \item Let $H_{\mathcal F} = (V;U)$ be the underlying graph of 
    $D_{\mathcal F}$, that is, $U =
    E \cup E^{-1}$. Then a finite undirected graph $G$ has an ${\mathcal F}$-free
    orientation if and only if it has a homomorphism to $H_{\mathcal F}$. 
    Conversely, a finite directed graph has a homomorphism to $H_{\mathcal F}$ if and only if the underlying graph has an ${\mathcal F}$-free orientation.
    Hence,
    the ${\mathcal F}$-free orientation problem and $\Csp(H_{\mathcal F})$ are essentially the same problem. 
    \item Similarly as in the previous item, the ${\mathcal F}$-free orientation completion problem may be viewed as
    $\Csp(D_{\mathcal F},U)$. 
\end{itemize}

If all tournaments in ${\mathcal F}$ contain a directed cycle, then every finite graph $G$ has an ${\mathcal F}$-free orientation, since we may orient the edges along an arbitrary linear order of the vertices. Hence, the $\mathcal F$-free orientation problem is trivial and in P. Note that in this case, $H_{\mathcal F}$ is the Rado graph. 

Otherwise, there exists a smallest $n = n_{\mathcal F} \in {\mathbb N}$ 
such that $\mathcal F$ contains the transitive tournament $T_n$ with $n$ vertices. In this case,
there exists a largest $k = k_{\mathcal F} \in {\mathbb N}$ such that $K_k$ has an ${\mathcal F}$-free orientation~\cite{erdosMTA9}.
Finally, $m_{\mathcal F}$ denotes the \red{maximum number of vertices of a tournament in} ${\mathcal F}$. 
Now we present some easy observations.

\begin{observation}\label{obs:kandn}
It follows from the definition of $n_\mathcal F$ that if  $k \le n_\mathcal F -1$, then $T_k$ is $\mathcal F$-free. Hence, the inequality
$k_{\mathcal F} \ge  n_{\mathcal F} - 1$ holds. 
Since every tournament in $\mathcal F$ has at least two vertices, we have $n_\mathcal F \ge 2$ and $k_\mathcal F \ge 1$.
\end{observation}

\begin{observation}
A notable special case is the situation that the inequality from the previous observation is actually an equality. 
\begin{itemize}
\item If $n_{\mathcal F} = 2$ then no graph with a non-empty edge set has an ${\mathcal F}$-free orientation, $k_{\mathcal F} = 1$, and the ${\mathcal F}$-free orientation problem is trivial and in P. 
\item Suppose that there is no $\mathcal F$-free tournament with $n_{\mathcal F}$ vertices. 
    Then the ${\mathcal F}$-free orientation problem is equivalent to finding an $n_\mathcal F$-element clique in a
    given finite graph, and hence in P. This generalises the previous two cases. Note that in this case we have
    $k_{\mathcal F} = n_{\mathcal F} -1$. 
\end{itemize}
\end{observation}


\begin{lemma}\label{lem:HF-core}
    Let $\mathcal F$ be a finite set of finite tournaments. If $n_{\mathcal F} \geq 3$,
    then the structure $H_{\mathcal F}$ is a  core. 
\end{lemma}
\begin{proof}
First consider the case that $k := k_{\mathcal F}$ as defined above is even.  
    The relation $x \neq y$ can be defined primitively positively by the formula
    \begin{align*}
        \exists u_1,\dots,u_k \big (U(x,u_1) \wedge \cdots \wedge U(x,u_{k/2}) \wedge U(u_{k/2+1},y) \wedge \cdots \wedge U(u_k,y) \big). 
    \end{align*}
    Note that if $x=y$ then $x,u_1,\dots,u_k$ would induce a clique which does not have an ${\mathcal F}$-free orientation. 
    Similarly, one may show that 
 the complement of the edge relation has a primitive positive definition by removing \red{an} edge from $K_{k+1}$. This shows that every endomorphism of $H_{\mathcal F}$ is an embedding.
 The case that $k$ is odd can be handled similarly. 
\end{proof}

\section{Constraint Satisfaction Preliminaries}
\label{sect:CSP}
\red{Finite-domain CSPs exhibit a complexity dichotomy~\cite{Zhuk20}; this result has been announced in 2017
by Bulatov~\cite{BulatovFVConjecture} and by Zhuk~\cite{ZhukFVConjecture}, confirming a conjecture of Feder and Vardi~\cite{FederVardi}.} To state their result in its strongest form, we need to introduce the concept of a \emph{polymorphism} of a $\tau$-structure $\bB$, which is a \red{homomorphism} $f \colon B^k \to B$, for some $k \in {\mathbb N}$\red{.} 
The result of Bulatov and Zhuk can be phrased as follows. 

\begin{theorem}[\cite{BulatovFVConjecture,ZhukFVConjecture}]
    Let $\bB$ be a finite structure. If $\bB$ has a polymorphism which is a \emph{weak near unanimity operation}, i.e., which satisfies for all $x,y \in B$ that 
    $$f(x,\dots,x,y) = f(x,\dots,y,x) = \cdots = f(y,x,\dots,x),$$
    then $\Csp(\bB)$ is in P. 
\end{theorem}
Before Bulatov and Zhuk proved this theorem, it was already known 
that finite structures without a weak near unanimity polymorphism can simulate the three-coloring problem in a very specific way: using primitive positive interpretations. Actually, this hardness condition works for arbitrary (not only finite domain) structures, and we recall it in detail in the following. 

A \emph{primitive positive formula} is a formula $\phi(y_1,\dots,y_k)$ of the form
$$ \exists x_1,\dots,x_n (\psi_1 \wedge \cdots \wedge \psi_m)$$
where $\psi_1,\dots,\psi_m$ are atomic formulas with variables from $x_1,\dots,x_n$ and the free variables $y_1,\dots,y_k$. Note that the equality symbol $=$, the symbol $\top$ for true, and the symbol $\bot$ for false are permitted in atomic formulas. A relation $R \subseteq B^k$ is called \emph{primitively positively definable} in a $\tau$-structure $\bB$ if there exists a primitive positive formula $\phi(y_1,\dots,y_k)$ over the signature $\tau$ such that $R = \{(b_1,\dots,b_k) \mid \bB \models \phi(b_1,\dots,b_k) \}$. 
Suppose that 
$\phi$ is a primitive positive formula 
build with relation symbols from the signature $\tau$ and without the equality symbol. The \emph{canonical database} $\bA$ of $\phi$ 
is the $\tau$-structure whose elements are the (free and existentially quantified) variables of $\phi$, and where $a \in R^{\bA}$, for a relation symbol $R \in \tau$ of arity $k$, 
if $\phi$ contains the conjunct $R(a)$. 
We stress that the following two lemmata also hold for structures $\bB$ with an infinite domain. 

\begin{lemma}[see, e.g., \cite{JBK,Book}]\label{lem:pp-reduce}
    Suppose that $R$ is a relation with a primitive positive definition in $\bB$. Then there is a polynomial-time reduction from $\Csp(\bB,R)$ to 
    $\Csp(\bB)$. 
\end{lemma}

It turns out that primitive positive definability over a structure $\bB$ with a finite domain can be characterised using 
the polymorphisms of $\bB$ (we mention that the result below also holds for countably infinite $\omega$-categorical structures $\bB$~\cite{BodirskyNesetrilJLC}; however, this fact is not needed in our proofs).

\begin{theorem}[see, e.g.,~\cite{BoKaKoRo,Geiger}]\label{thm:ppdef-pol}
Let $\bB$ be a relational structure with a
finite domain. A relation $R$ has a primitive positive definition on $\bB$
if and only if $R$ is preserved by all polymorphisms of $\bB$.
\end{theorem}


If $\bB$ is a $\tau$-structure, and $\bA$ is a $\sigma$-structure, then 
a \emph{primitive positive interpretation} of $\bA$ in $\bB$ is a partial map $I$ from $B^d$ to $A$ such that for each atomic formula $\phi(y_1,\dots,y_k)$ over the signature $\sigma$ there exists a primitive positive formula $\phi_I$ that defines $\big 
\{(b_1,\dots,b_k) \in B^{kd} \mid \bA \models \phi(I(b_1),\dots,I(b_k)) \big \}.$
We refer to $d$ as the \emph{dimension} of $I$. 

\begin{lemma}[see, e.g.,~\cite{Book}]\label{lem:pp-interpret-reduce}
    Suppose that $\bA$ has a primitive positive interpretation in $\bB$. Then there is a polynomial-time reduction from $\Csp(\bA)$ to 
    $\Csp(\bB)$. 
\end{lemma}

It is well-known and easy to prove that primitive positive interpretations can be composed (see, e.g.,~\cite{Book}).

\begin{corollary}\label{cor:pp-interpret-hard}
Suppose that $K_3$ has a primitive positive interpretation in $\bB$. 
Then $\Csp(\bB)$ is NP-hard. 
\end{corollary}

The special case of the result of Bulatov and of Zhuk for \emph{Boolean structures}, i.e., structures with a two-element domain, is of particular importance in the present paper. This case has been known \red{for} much longer, and it is referred to as Schaefer's theorem. In this special case, one can spell out concrete descriptions of the weak near unanimity polymorphisms that imply polynomial-time tractability of the CSP.  

\begin{theorem}[Schaefer's theorem]\label{thm:schaefer}
Let $\bB$ be a structure with a domain of size two.
Then either $\bB$ interprets $K_3$ 
primitively positively, 
or $\bB$ has one of the following weak near unanimity polymorphisms
\begin{itemize}
    \item the binary \emph{minimum or maximum operation}, i.e., 
    an operation $f$ satisfying $f(x,y)=f(y,x)$ and $f(x,x) = x$ for all $x,y \in B$, 
    \item the \emph{ternary majority operation}, i.e., the (unique!) operation $f$ satisfying $f(x,x,y) = f(x,y,x) = f(y,x,x) = x$ for all $x,y \in B$, 
    \item the \emph{ternary minority operation}, i.e., the (unique!) operation $f$ satisfying $f(x,x,y) = f(x,y,x) = f(y,x,x) = y$ for all $x,y \in B$, 
    \item a constant operation, i.e., an operation satisfying $f(x) = f(y)$ for all $x,y \in B$. 
\end{itemize}
In all of these cases, $\Csp(\bB)$ is in P. 
\end{theorem}

A full complexity classification for Boolean CSPs up to logspace reductions can found in~\cite{AllenderSchaefer}; also see~\cite{PPPoset}. 
The following is a well-known fact from linear algebra and will be useful later.  

\begin{lemma}\label{lem:minority-leqs} 
A Boolean relation $R \subseteq \{0,1\}^n$
is preserved by the Boolean minority operation if and only if $R$ is the solution space of a system of linear equalities over the two-element field ${\mathbb F}_2$.  
\end{lemma}

The \emph{infinite-domain tractability conjecture} of Bodirsky and Pinsker from 2011 first appeared in~\cite{BPP-projective-homomorphisms}; the formulation below is equivalent to it by results from~\cite{BKOPP-equations}.  

\begin{conjecture}
    Let $\bB$ be a reduct of a finitely bounded countable homogeneous structure. If 
    $\bB$ does not interpret primitively positively a graph which is homomorphically equivalent to $K_3$, then $\Csp(\bB)$ is in P. 
\end{conjecture}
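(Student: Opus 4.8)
The plan is to attack the \emph{infinite-domain tractability conjecture} along the lines of the now-standard ``algebraic approach'' to constraint satisfaction. It should be stressed at the outset that this is one of the central open problems of the area, and what follows is a roadmap rather than a theorem; the present paper establishes the conjecture only for the concrete subclass of $\mathrm{MMSNP}_2$ arising from forbidden-tournament orientation problems, via \cref{thm:schaefer} and \cref{cor:pp-interpret-hard}. The overall architecture has two halves: an \emph{algebraic} step that, under the non-interpretability hypothesis, produces strong ``finite-domain-like'' structure on the polymorphism clone of $\bB$, and an \emph{algorithmic} step that turns that structure into a polynomial-time procedure.

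First I would reduce to a convenient template. Given a reduct $\bB$ of a finitely bounded homogeneous structure, one replaces $\bB$ by its \emph{model-complete core}, which has the same CSP and is again $\omega$-categorical; one wants the core to again be a reduct of a finitely bounded homogeneous structure --- a statement that is itself a conjecture of Bodirsky and Pinsker, verified in many cases but open in general, so a subtlety already surfaces here. Granting it, one expands by finitely many constants, which affects the CSP only up to polynomial-time reductions and allows one to ignore the distinction between pp-interpretations with and without parameters, and then passes to the \emph{Ramsey expansion} of the ambient finitely bounded homogeneous structure, which exists by structural Ramsey theory. In this homogeneous, finite-language, Ramsey setting polymorphisms can be made \emph{canonical}, i.e.\ induced by well-defined operations on orbits of tuples.

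On the algebraic side, by the equivalences collected in \cite{BartoPinskerDichotomy,BKOPP-equations} (building on \cite{BPP-projective-homomorphisms}), the hypothesis that $\bB$ does not pp-interpret a graph homomorphically equivalent to $K_3$ is equivalent to the polymorphism clone of the model-complete core admitting no continuous clone homomorphism onto the clone of projections on $\{0,1\}$, and this in turn is equivalent to the existence of a \emph{pseudo-Siggers polymorphism}: a $6$-ary operation $s$ and unary operations $e_1, e_2$ in the clone with $e_1(s(x,y,x,z,y,z)) = e_2(s(y,x,z,x,z,y))$. Using the Ramsey expansion one extracts from $s$ a \emph{canonical} pseudo-Siggers polymorphism, whose induced behaviour on orbits is a genuine finite-domain Taylor algebra. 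The first real obstacle appears precisely here: unlike in the finite case, it is not known that canonical pseudo-Siggers behaviour forces canonical weak-near-unanimity or cyclic behaviour in \emph{every} arity, nor that the behaviour on orbits controls the truly infinite part of the template tightly enough to read off an algorithm.

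Finally, the algorithmic side needs a uniform polynomial-time procedure, and the candidates are the familiar ones: bounded-width / local-consistency (Datalog) algorithms, applicable when the clone omits the relevant ``linear'' obstructions; few-subpowers / Gaussian-elimination-type algorithms, applicable in the presence of canonical Maltsev-like polymorphisms; and combinations of the two, in particular the \emph{smooth approximations} method of \cite{MottetPinskerSmooth}, which at present proves the conjecture only under additional hypotheses. Such an algorithm is not run on the infinite template directly but on a large finite \emph{sample}: by finite boundedness and $\omega$-categoricity, satisfiability of a finite instance can be decided by checking homomorphisms into a sufficiently large finite substructure, so a finite-domain algorithm applied to an appropriate finite approximation suffices --- provided the approximation inherits the good polymorphisms. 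Making these two halves meet --- proving that the canonical-polymorphism structure produced by the algebraic step always triggers one of the known algorithms --- is the main obstacle, and is exactly the point at which the conjecture remains open; I would not expect to resolve the general case here, and indeed the contribution of this paper is to carry out an analogous but fully completable program in the restricted, Boolean setting of forbidden tournaments.
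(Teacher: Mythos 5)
The statement you were asked about is not a theorem of this paper at all: it is stated verbatim as a \emph{conjecture} (the infinite-domain tractability conjecture of Bodirsky and Pinsker), and the paper offers no proof of it --- it only \emph{verifies} the conjecture for the specific templates $H_{\mathcal F}$ arising from forbidden-tournament orientation problems (Theorem~\ref{thm:main}), and for $(D_{\mathcal F},U)$ in Theorem~\ref{thm:or-comp-classification}. Your response --- explicitly declining to claim a proof, and instead laying out the standard algebraic roadmap together with its known gaps --- is therefore the correct answer, and there is nothing in the paper to compare it against. Your roadmap is, moreover, an accurate account of the state of the art: the reduction to model-complete cores (with the caveat, which you rightly flag, that closure of the class of reducts of finitely bounded homogeneous structures under taking model-complete cores is itself only conjectural in general), the passage to Ramsey expansions and canonical polymorphisms, the equivalence of the non-interpretability hypothesis with the existence of pseudo-Siggers (or, equivalently by~\cite{BKOPP-equations}, pseudo weak near unanimity) polymorphisms, and the unresolved problem of converting canonical Taylor behaviour into a uniform polynomial-time algorithm, for which smooth approximations~\cite{MottetPinskerSmooth} is currently the leading but still partial technique. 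The only thing I would add is that the paper's actual contribution is precisely the kind of ``fully completable program in a restricted Boolean setting'' you describe in your last sentence: the reduction to the two-element structures $\bB_{\mathcal F}$ and $\bC_{\mathcal F}$ lets Schaefer's theorem play the role that the missing general algorithmic step would have to play in the full conjecture.
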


We will verify a strong form of this conjecture for the ${\mathcal F}$-free orientation problem (Theorem~\ref{thm:main}; we only require that $\bB = H_{\mathcal F}$ does not interpret $K_3$ primitively positively).
A known obstruction for $\bB$ to admit a primitive positive interpretation of $K_3$
is the existence of a so-called \emph{pseudo weak near uanimity polymorphism}, which is a polymorphism \red{$f$} of $\bB$ of arity $k \geq 2$ such that there are endomorphisms $e_1,\dots,e_k$ satisfying for all $x,y \in B$ that
\begin{align}
    e_1(f(x,\dots,x,y)) = e_2(f(x,\dots,y,x)) = \cdots = e_k(f(y,x,\dots,x)).
    \label{eq:pwnu} 
\end{align}
We will verify in Theorem~\ref{thm:main} that if $H_{\mathcal F}$ does not admit a primitive positive interpretation of $K_3$, then it has a pseudo weak near unanimity polymorphism.

\begin{example}
    \label{expl:Rado-pwnu}
    Recall from Section~\ref{sect:MT} that if all tournaments in $\mathcal F$ contain a directed cycle, then $H_{\mathcal F}$ is isomorphic to the Rado graph ${\mathfrak R}$. 
    Using the homogeneity of ${\mathfrak R}$, it is easy to construct an embedding $f \colon {\mathfrak R}^3 \to {\mathfrak R}$.  
    One may find embeddings $e_1,e_2,e_3$ of ${\mathfrak R}$ 
    into itself such that
    $e_1(f(x,x,y)) = e_2(f(x,y,x)) = e_3(f(y,x,x))$, so 
    $\mathfrak R$ has a ternary pseudo \red{weak} near unanimity polymorphism \red{(in this case, we even have a self-embedding $e_0$ of ${\mathfrak R}$ such that we may add $ = e_0(f(x,x,x))$ to the equation, which is why $f$ is called a \emph{pseudo near unanimity}, i.e., we can drop the adjective `weak' in this case).} The same construction works for the Henson graphs. 
\end{example}


\section{The Orientation Completion Problem}
\label{sect:dicho-oc}
We divide this section into two parts. In the first one, we show that for each finite
set of finite tournaments $\mathcal{F}$ there is a Boolean structure whose CSP is
equivalent to the $\mathcal{F}$-free orientation completion problem. This naturally
yields a complexity classification of the $\mathcal F$-free orientation completion 
problem in terms of Schaefer's cases and the constructed Boolean structure.  In the
second part, we observe that these cases reduce to only two possibilities: either
the Boolean structure primitively positively interprets $K_3$, or it is preserved by
the minority polymorphism. In particular, this means that either the $\mathcal F$-free
orientation completion problem reduces (in log-space) to linear equations over
$\mathbb Z_2$, or otherwise,  the $\mathcal F$-free orientation completion problem
is NP-complete. We provide several examples in \cref{sect:examples}.

\subsection{Equivalence to Boolean CSPs}
If $T$ is a tournament with vertex set $\{1,\dots, n\}$, $n \ge 2$, we define
$b_T\in \{0,1\}^{n \choose 2}$ as follows. The entries of $b_T$ will be indexed by $2$-element
subsets $\{i,j\}$ of $\{1,\dots, n\}$ written as $(b_T)_{ij}$. For all $\{i,j\}\subseteq \{1,\dots, n\}$
with $i < j$ we have that $(b_T)_{ij} = 1$ if and only if $E(i,j)$. This coding clearly yields a
bijection between  $\{0,1\}^{n \choose 2}$ and labeled tournaments with vertex set $\{1,\dots, n\}$.
We illustrate this coding in \cref{fig:3-vertices}.


Let $\bB_{\mathcal F}$ be the structure with domain $\{0,1\}$  whose signature contains for every $n \in  \{2,\dots,m_\mathcal F \}$ 
the relation symbol $P_n$ of arity $n \choose 2$ which denotes in ${\bB_{\mathcal F}}$ the relation consisting of all  
$b_T \in \{0,1\}^{n \choose 2}$
such that the tournament $T$ is ${\mathcal F}$-free.
The structure $(\bB_\mathcal{F}, {\bf 0},{\bf 1})$ is the expansion of
$\bB_\mathcal{F}$ by the two unary singleton relations
${\bf 0} := \{0\}$ and ${\bf 1} := \{1\}$.

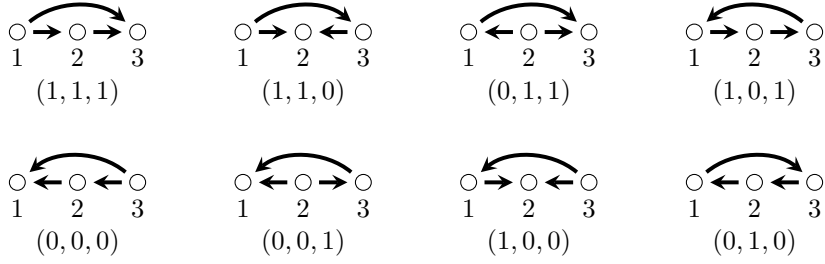
\begin{figure}[ht!]
\centering
\begin{tikzpicture}

  \begin{scope}[xshift = -4.5cm, scale = 0.8]
    \node [vertex, label = below:$1$] (1) at (-1,0) {};
    \node [vertex, label = below:$2$] (2) at (0,0) {};
    \node [vertex, label = below:$3$] (3) at (1,0) {};
    \node (L1) at (0,-1) {$(1,1,1)$};
        
    \draw [arc] (1) to (2);
    \draw [arc] (2) to (3);
    \draw [arc] (1) to [bend left = 40] (3);
  \end{scope}

  \begin{scope}[xshift = -1.5cm, scale = 0.8]
    \node [vertex, label = below:$1$] (1) at (-1,0) {};
    \node [vertex, label = below:$2$] (2) at (0,0) {};
    \node [vertex, label = below:$3$] (3) at (1,0) {};
    \node (L1) at (0,-1) {$(1,1,0)$};
        
    \draw [arc] (1) to (2);
    \draw [arc] (3) to (2);
    \draw [arc] (1) to [bend left = 40] (3);
  \end{scope}

  \begin{scope}[xshift = 1.5cm, scale = 0.8]
    \node [vertex, label = below:$1$] (1) at (-1,0) {};
    \node [vertex, label = below:$2$] (2) at (0,0) {};
    \node [vertex, label = below:$3$] (3) at (1,0) {};
    \node (L1) at (0,-1) {$(0,1,1)$};
        
    \draw [arc] (2) to (1);
    \draw [arc] (2) to (3);
    \draw [arc] (1) to [bend left = 40] (3);
  \end{scope}

  \begin{scope}[xshift = 4.5cm,  scale = 0.8]
    \node [vertex, label = below:$1$] (1) at (-1,0) {};
    \node [vertex, label = below:$2$] (2) at (0,0) {};
    \node [vertex, label = below:$3$] (3) at (1,0) {};
    \node (L1) at (0,-1) {$(1,0,1)$};
        
    \draw [arc] (1) to (2);
    \draw [arc] (2) to (3);
    \draw [arc] (3) to [bend right = 40] (1);
    \end{scope}

    \begin{scope}[xshift = -4.5cm, yshift = -2cm, scale = 0.8]
    \node [vertex, label = below:$1$] (1) at (-1,0) {};
    \node [vertex, label = below:$2$] (2) at (0,0) {};
    \node [vertex, label = below:$3$] (3) at (1,0) {};
    \node (L1) at (0,-1) {$(0,0,0)$};
        
    \draw [arc] (3) to (2);
    \draw [arc] (2) to (1);
    \draw [arc] (3) to [bend right = 40] (1);
  \end{scope}

  \begin{scope}[xshift = -1.5cm, yshift = -2cm, scale = 0.8]
    \node [vertex, label = below:$1$] (1) at (-1,0) {};
    \node [vertex, label = below:$2$] (2) at (0,0) {};
    \node [vertex, label = below:$3$] (3) at (1,0) {};
    \node (L1) at (0,-1) {$(0,0,1)$};
        
    \draw [arc] (2) to (1);
    \draw [arc] (2) to (3);
    \draw [arc] (3) to [bend right = 40] (1);
  \end{scope}

  \begin{scope}[xshift = 1.5cm, yshift = -2cm, scale = 0.8]
    \node [vertex, label = below:$1$] (1) at (-1,0) {};
    \node [vertex, label = below:$2$] (2) at (0,0) {};
    \node [vertex, label = below:$3$] (3) at (1,0) {};
    \node (L1) at (0,-1) {$(1,0,0)$};
        
    \draw [arc] (3) to (2);
    \draw [arc] (1) to (2);
    \draw [arc] (3) to [bend right = 40] (1);
  \end{scope}

  \begin{scope}[xshift = 4.5cm, yshift = -2cm, scale = 0.8]
    \node [vertex, label = below:$1$] (1) at (-1,0) {};
    \node [vertex, label = below:$2$] (2) at (0,0) {};
    \node [vertex, label = below:$3$] (3) at (1,0) {};
    \node (L1) at (0,-1) {$(0,1,0)$};
        
    \draw [arc] (3) to (2);
    \draw [arc] (2) to (1);
    \draw [arc] (1) to [bend left = 40] (3);
  \end{scope}

\end{tikzpicture}
\caption{The eight labeled tournaments on $3$ vertices. The labels correspond
to the associated tuple $(x_{1,2},x_{1,3},x_{2,3})$ where $x_{i,j} = 1$ if 
$(i,j)\in E(G)$, and  $x_{i,j} = 0$ if $(j,i)\in E(G)$ for $1\le i < j \le 3$.}
\label{fig:3-vertices}
\end{figure}

\begin{example}
For the sake of clarity, we provide an explicit description of $\bB_\mathcal{F}$
if $\mathcal{F}$ is a set of tournaments on $3$ vertices.  Firstly, it is easy
to see that the ternary relation $P_3^{\bB_\mathcal{F}}$ is empty if 
$\mathcal{F} = \{T_3,\overrightarrow{C_3}\}$, i.e., there is no
$\mathcal{F}$-free orientation of $K_3$ in this case. If
$\mathcal{F} = \varnothing$,  then $P_3^{\bB_\varnothing} = \{0,1\}^3$, i.e.,
any orientation of $K_3$ is $\mathcal{F}$-free.  If $\mathcal{F} = \{T_3\}$
the relation $P_3^{\bB_\mathcal{F}}$  is the set $\{(1,0,1),(0,1,0)\}$,
since the  $T_3$-free orientations of $K_3$ correspond to both cyclic orientations
of $K_3$.
Finally, if $\mathcal{F} = \overrightarrow{C_3}$, then
$P_3^{\bB_\mathcal{F}} = \{0,1\}^3\setminus \{(1,0,1), (0,1,0)\}$.
\end{example}

A reduction similar to the reduction in the next theorem has been
described in~\cite{BodMot-Unary}.

\begin{theorem}\label{thm:reduction-to-boolean}
    The following statements hold for any finite set of finite tournaments $\mathcal{F}$.
    \begin{enumerate}
        \item There is a polynomial-time reduction from the ${\mathcal F}$-free
            orientation problem to $\Csp(\bB_{\mathcal F})$. 
         \item There is a polynomial-time reduction from the ${\mathcal F}$-free
            orientation completion problem to $\Csp(\bB_{\mathcal F},{\bf 0},{\bf 1})$. 
    \end{enumerate}
\end{theorem}
\begin{proof}
    Let $D$ be a given input digraph of the ${\mathcal F}$-free orientation
    completion problem, and fix an enumeration $(v_1,\dots,v_n)$ of the vertex
    set $V(D)$. Create a variable $x_{i,j}$ for each $i,j \in \{1,\dots,n\}$ with
    $i<j$. 
    Now suppose that $v_{i_1},\dots,v_{i_\ell}$, for $i_1<\cdots<i_\ell$, induce a semicomplete digraph.
    If \red{$\ell \le m_{\mathcal F}$}, then we add the constraint \red{$P_\ell(x_{i_1,i_2},x_{i_1,i_3},\dots,
    x_{i_{\ell-1},i_\ell})$}. Finally, for each pair of vertices $i< j$ we add the constraint
    ${\bf 1}(x_{i,j})$ if $(i,j)\in E(D)$ and $(j,i)\not\in E(D)$; otherwise, if
    $(i,j)\not\in E(D)$ and $(j,i)\in E(D)$, we add the constraint ${\bf 0}(x_{i,j})$.
    Clearly, the resulting instance of $\Csp(\bB_{\mathcal F}, {\bf 0},{\bf 1})$ has
    a solution if and only if $D$ can be completed to an ${\mathcal F}$-free
    oriented graph.
    With similar arguments but omitting the unary constraints ${\bf 0}(x)$ and ${\bf 1}(x)$,
    we obtain a polynomial-time reduction from the $\mathcal{F}$-free orientation
    problem to $\Csp(\bB_\mathcal{F})$.
\end{proof}

In the rest of this section, we show that there is a polynomial-time reduction 
from $\Csp(\bB_\mathcal{F})$ to the $\mathcal{F}$-free orientation completion
problem, and use this reduction to classify the 
complexity of the $\mathcal{F}$-free orientation completion problem. 

Given a digraph $D = (V,E)$ and an edge $(x,y)\in E$, we write $D - (x,y)$ to denote
the digraph $(V, E\setminus\{(x,y)\})$.
Consider a set of tournaments $\mathcal{F}$ and a symmetric edge $xy$ of $D$. 
We say that $xy$ is  \textit{free} in $D$ (with respect to $\mathcal{F}$)
if $D-(x,y)$
and $D-(y,x)$ can be completed to an
$\mathcal{F}$-free oriented graph. We say that a pair $(x,y)$ \textit{forces}
a pair $(u,v)$ in $D$ (with respect to $\mathcal{F}$) if $xy$ and $uv$ are free
symmetric edges in $D$, and every $\mathcal{F}$-free orientation completion of
$D-(y,x)$ contains $(u,v)$ as an oriented edge. In other words, if $xy$
and $uv$ are free edges in $D$, we say that $(x,y)$
forces $(u,v)$ if any orientation completion $D'$ of $D$ such that 
$(x,y),(v,u)\in E(D')$ contains some tournament $T$ of $\mathcal{F}$.
For instance, in the digraph $D_1$ (see \cref{fig:forced-edges}) the pair
$(x,y)$ forces the pair $(u,v)$ with respect to $\{\overrightarrow{C_3}\}$.
Recall that if $\mathcal F$ contains a transitive tournament, we denote by $n_\mathcal F$
the minimum number of vertices of a transitive tournament in $\mathcal F$.

\begin{lemma}\label{lem:forced-edges}
Let $\mathcal F$ be a non-empty finite set of tournaments, and $m$ the minimum number of vertices in a tournament
of $\mathcal F$. In this case, the following statements are equivalent.
\begin{enumerate}
    \item An oriented graph $D$ is $\mathcal F$-free if and
    only if it contains no tournament on $m$ vertices. 
    \item For each digraph $D$ there is an $\mathcal{F}$-free orientation completion of 
    $D$ if and only if every orientation completion of  $D$ is $\mathcal{F}$-free.
    \item For every digraph $D$, if a pair $(x,y)$ forces a pair $(u,v)$ in $D$ with respect
    to $\mathcal{F}$, then $x = u$ and $y = v$.
    \item For every semicomplete digraph $D$ on $m$ vertices, if a pair $(x,y)$ forces a pair $(u,v)$ in $D$ 
    with respect to $\mathcal{F}$, then $x = u$ and $y = v$.
\end{enumerate}
\end{lemma}
\begin{proof}
Suppose that the first item holds. It immediately follows that a digraph $D$ admits
an orientation completion if and only if $D$ contains no semicomplete digraph on $m$
vertices. In this case, any orientation completion of $D$ is $\mathcal F$-free. Thus, the first
item implies the second one. 

Assume the second statement to be true. Since there is a tournament on $m$ vertices in $\mathcal F$, 
it must be the case that no orientation of $K_m$ is $\mathcal{F}$-free. By the choice of $m$
it must be the case that $\mathcal{F}$ contains all tournaments on $m$ vertices up to isomorphism, 
and that $\mathcal F$ contains no tournament on less than $m$ vertices. Thus, 
an oriented graph is $\mathcal F$-free if and only if it contains no tournament on $m$
vertices.

Directly from the definition of ``$(x,y)$ forces $(u,v)$'' one can notice that the negation
of the third statement implies the negation of the second one. Equivalently, the second item
implies the third one, and trivially, the third item implies the fourth one. Finally, we argue
that the fourth item implies the first one by contraposition. So, assuming the first item is
not true, we know that there  must be at least one $\mathcal F$-free tournament on $m$ vertices.
Let $T$ be a tournament in $\mathcal{F}$ with $m$ vertices, and  $(x_1,y_1),\dots, (x_n,y_n)$
be the edges of $T$.  Consider the semicomplete digraph $T^i$ recursively defined as
$T^i: = (V(T), E(T^{i-1})\cup \{(y_i,x_i)\})$,
where $T^0 := T$. In particular, notice
that $T^n$ is the complete graph on $m$ vertices, so $T^n$
admits an $\mathcal{F}$-free orientation. Moreover, by the symmetries of complete graphs, 
for any edge $(x,y)\in T^n$ there is an $\mathcal{F}$-free orientation completion of 
$T^n-(x,y)$. Let $l$ be minimal such that $T^l$ can be completed to
an $\mathcal{F}$-free tournament. Clearly, $x_ly_l$ is a symmetric edge in $T^l$,
and since $l\le n-1$, there is non-symmetric edge in $T^l$, namely, $(x_{l+1},y_{l+1})\in E(T^l)$
and $(y_{l+1},x_{l+1})\not\in E(T^l)$. From these observations, and by the choice of 
$l$, it follows that $x_ly_l$ and $x_{l+1}y_{l+1}$ are (different) symmetric edges in 
$T^{l+1}$, and $(x_{l+1},y_{l+1})$ forces $(y_l,x_l)$. The claim follows.
\end{proof}

It is evident that for every set of tournaments $\mathcal{F}$, every digraph $D$,  and every
symmetric edge $xy\in E(D)$, the edge $(x,y)$ forces itself. In the proof of the following
lemma, we will implicitly use the following observations several times:
\begin{enumerate}
    \item If $(x,y)$ forces $(u,v)$, then $(v,u)$ forces $(y,x)$. 
    \item If $(x,y)$ forces $(u,v)$ and $(u,v)$ forces $(a,b)$, then 
    $(x,y)$ forces $(a,b)$.
    \item If $\varphi\colon D\to D'$ is a homomorphism, and $(x,y)$ forces
    $(u,v)$ in $D$, then $(\varphi(x),\varphi(y))$ forces $(\varphi(u),\varphi(v))$
    in $D'$. 
\end{enumerate}

The following lemma shows that given a digraph $D$ with a pair $(x,y)$ that forces
a pair $(u,v)$, we can construct a digraph $D'$ with a pair $(x', y')$ that
forces a pair $(u',v')$, and the latter also forces the former. Moreover, $D'$
can be chosen in such \red{a way that} the vertices $x',y'$ are ``far apart'' from 
the pair $u',v'$. To this end, we consider the following notion of distance.
Given a pair of vertices $x,y$ in a connected digraph $D$, we denote by  $d(x,y)$
the distance between $x$ and \red{$y$} in the underlying graph $u(D)$.  That is,
the number of edges in a shortest path between $x$ and $y$ in $u(D)$, \red{which can traverse edges ignoring their direction}.
The previously mentioned construction is described in the proof of the following lemma, 
and illustrated in \cref{fig:forced-edges}. 

\begin{lemma}\label{lem:mutual-implication}
    The following statements are equivalent for a finite set of finite tournaments
    $\mathcal{F}$.
    \begin{enumerate}
        \item There is a digraph $D$ with two pairs of vertices $(x,y)$ and
        $(u,v)$ such that $(x,y)$ forces $(u,v)$, and $(x,y)\neq (u,v)$.
        \item There is a digraph $D$ with two pairs of vertices $(x,y)$ and
        $(u,v)$ such that $(x,y)$ forces $(u,v)$, and $|\{x,y,u,v\}| = 4$.
        \item For every positive integer $k$, there is a digraph $D$ with two pairs
        of vertices $(x,y)$ and $(u,v)$ such that $(x,y)$ and $(u,v)$ force each other,
        and $d(a,b) \ge k$ for \red{any} $a\in\{x,y\}$ and $b\in \{u,v\}$.
    \end{enumerate}
\end{lemma}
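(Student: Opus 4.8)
The plan is to prove the cycle of implications $(3) \Rightarrow (1) \Rightarrow (2) \Rightarrow (3)$, where the first two implications are essentially immediate and the real content lies in $(2) \Rightarrow (3)$.

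First, $(3) \Rightarrow (1)$ is trivial: take $k = 1$ in $(3)$, and the resulting digraph $D$ has a pair $(x,y)$ forcing $(u,v)$ with $d(x,u) \ge 1$, hence $(x,y) \neq (u,v)$. Next, for $(1) \Rightarrow (2)$, suppose $(x,y)$ forces $(u,v)$ in $D$ with $(x,y) \neq (u,v)$. If already $|\{x,y,u,v\}| = 4$ we are done, so suppose the four vertices are not distinct. Using the observations listed before the lemma -- in particular that $(x,y)$ forces $(u,v)$ implies $(v,u)$ forces $(y,x)$, and that forcing is preserved by homomorphisms (applied to a suitable disjoint-union-with-identifications construction, i.e. gluing two copies of $D$ along an appropriate subset) -- one builds a new digraph in which the two forced edges are vertex-disjoint. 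Concretely, I would take two disjoint copies $D_1, D_2$ of $D$, with pairs $(x_1,y_1)$ forcing $(u_1,v_1)$ in $D_1$ and $(x_2,y_2)$ forcing $(u_2,v_2)$ in $D_2$, and identify $(u_1,v_1)$ with $(x_2,y_2)$; then in the glued digraph $(x_1,y_1)$ forces $(u_2,v_2)$ by transitivity, and one checks that by choosing the copies and the original non-equality carefully the four relevant vertices $x_1,y_1,u_2,v_2$ are pairwise distinct. (One must verify that the glued edges remain free, which follows from freeness in each copy.)

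The main work is $(2) \Rightarrow (3)$. Given a digraph $D$ with $(x,y)$ forcing $(u,v)$ and $|\{x,y,u,v\}| = 4$, the idea is to iterate the gluing construction to push the forced pair arbitrarily far from the forcing pair while retaining mutual forcing. Fix $k$. Form a chain of $k$ (or more) copies $D_0, D_1, \dots, D_{k-1}$ of $D$, where in $D_i$ the pair $(x^{(i)}, y^{(i)})$ forces $(u^{(i)}, v^{(i)})$, and glue consecutive copies by identifying $(u^{(i)}, v^{(i)})$ with $(x^{(i+1)}, y^{(i+1)})$. By transitivity of forcing, $(x^{(0)}, y^{(0)})$ forces $(u^{(k-1)}, v^{(k-1)})$ in the resulting digraph $D'$; and by the first observation ($(x,y)$ forces $(u,v)$ $\Rightarrow$ $(v,u)$ forces $(y,x)$) together with transitivity, these two pairs force each other. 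Since consecutive glued copies share only two vertices and the internal structure of each copy is bounded, any path in $u(D')$ from $\{x^{(0)}, y^{(0)}\}$ to $\{u^{(k-1)}, v^{(k-1)}\}$ must traverse each of the $k$ "gluing interfaces", contributing at least one edge per copy, so $d(a,b) \ge k$ for $a \in \{x^{(0)}, y^{(0)}\}$, $b \in \{u^{(k-1)}, v^{(k-1)}\}$. Actually, to get distance exactly $\ge k$ one may need to take more copies or note each copy of $D$ contributes distance at least $1$ between its interface pairs; in the worst case one simply uses $\lceil k / c \rceil$ copies where $c$ is a lower bound (possibly $1$) on the distance in $D$ between $\{x,y\}$ and $\{u,v\}$, which is positive precisely because $|\{x,y,u,v\}|=4$ forces $x \neq u$ etc. -- though even if that distance were $0$ in $u(D)$ the vertices are distinct, and a short separate argument (or a harmless preprocessing step adding a pendant path) handles it.

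The key technical obstacle I anticipate is verifying that the glued digraph $D'$ genuinely preserves the forcing relation -- i.e. that the edges being glued remain \emph{free} in $D'$ (both orientations extend to $\mathcal{F}$-free completions), and that forcing is not destroyed by the new completions made available through the other copies. The right tool is observation (3): a homomorphism $\varphi \colon D' \to D''$ transports forcing. Here one argues in the other direction: an $\mathcal{F}$-free orientation completion of $D'$ restricts to one on each copy $D_i$ (since each $D_i$ is a substructure and $\mathcal{F}$-freeness is inherited by substructures), so the forcing constraints active within each copy remain in force in $D'$, and chaining them gives the global forcing. Freeness of the glued edges follows by taking, for a prescribed orientation of the glued edge, the $\mathcal{F}$-free completions witnessing freeness in each individual copy and observing they agree on the shared vertices and so patch together -- one must check no new tournament in $\mathcal{F}$ is created across the interface, which holds because distinct copies share at most two vertices, fewer than the size of any tournament in $\mathcal{F}$ (recall all tournaments in $\mathcal{F}$ have at least two vertices, and in fact if some had exactly two vertices the orientation problem is degenerate; the interesting case has $m_{\mathcal F} \ge 3$, so two shared vertices cannot complete a forbidden pattern spanning two copies).
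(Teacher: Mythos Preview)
Your overall strategy matches the paper's, but there are two genuine gaps.

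\textbf{Gap in $(2) \Rightarrow (3)$: mutual forcing does not follow from the chain alone.} You claim that once the chain gives ``$(x^{(0)},y^{(0)})$ forces $(u^{(k-1)},v^{(k-1)})$'', observation~1 together with transitivity yields that the two pairs force each other. This is false: observation~1 only gives $(v^{(k-1)},u^{(k-1)})$ forces $(y^{(0)},x^{(0)})$, which is \emph{not} the statement $(u^{(k-1)},v^{(k-1)})$ forces $(x^{(0)},y^{(0)})$. One-directional forcing need not be symmetric. The paper repairs this with an extra step you are missing: after building the chain $D'$, it takes a second disjoint copy $D''$ of $D'$ and glues the two end-to-end in a cycle (identifying $u_k'\sim x_1''$, $v_k'\sim y_1''$, $u_k''\sim x_1'$, $v_k''\sim y_1'$). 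In the resulting digraph, $(x_1',y_1')$ forces $(u_k',v_k')=(x_1'',y_1'')$, which in turn forces $(u_k'',v_k'')=(x_1',y_1')$; now the two pairs genuinely force each other.

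\textbf{Gap in $(1) \Rightarrow (2)$: the naive gluing does not always produce four distinct vertices.} Your construction identifies $(u_1,v_1)$ with $(x_2,y_2)$ and asserts that ``choosing the copies and the original non-equality carefully'' makes $x_1,y_1,u_2,v_2$ distinct. But if, say, $y=v$ in the original $D$, then $v_1=y_1$ and $v_2=y_2$, and your identification forces $y_1=v_1=y_2=v_2$, so $(x_1,y_1)$ and $(u_2,v_2)$ still share a vertex. (The case $x=u$ fails similarly.) The paper handles this by a case analysis and, in the problematic case $y=v$, uses a \emph{swapped} gluing: identify $y_1$ with $u_2$ and $u_1$ with $y_2$. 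Then $(x_1,y_1)$ forces $(u_1,y_1)=(y_2,u_2)$, and observation~1 in $D_2$ gives $(y_2,u_2)$ forces $(y_2,x_2)$; transitivity yields $(x_1,y_1)$ forces $(y_2,x_2)$, and now the four vertices $x_1,\,y_1=u_2,\,y_2=u_1,\,x_2$ are pairwise distinct.

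Your final paragraph on why gluing preserves freeness and forcing is essentially correct (any tournament in the glued digraph lies entirely in one copy since non-interface vertices from different copies are non-adjacent), but note that the paper packages this cleanly via observation~3 applied to the quotient homomorphism from the disjoint union onto the glued digraph.
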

\begin{proof}
    It is evident that the third statement implies the first one. 
    Now, we prove that the first item implies the second one. Suppose that 
    $(x,y) \neq (u,v)$. If $|\{x,y,u,v\}| = 4$, then there is nothing left to 
    prove. So suppose that $|\{x,y,u,v\}| = 3$. Notice that up to symmetry, there
    are two cases to consider: when $y = u$, and when $y = v$. We consider the
    latter one. Consider two copies of $D$, $D_1$ and $D_2$, where $(x_i,y_i)$
    forces $(u_i,y_i)$ in $D_i$ for each $i\in\{1,2\}$. In this case, let $D'$ be 
    the digraph obtained from the disjoint union of $D_1$ with $D_2$ after 
    identifying $y_1$ with $u_2$ and $u_1$ with $y_2$. Using the enumerated 
    observations preceding this lemma,  we conclude that $(x_1,y_1)$ forces 
    $(y_2,x_2)$ in $D'$. The case when $y = u$ follows with a similar construction.

    Finally, we show that the second statement implies the third one.
    Let $D$, $x$, $y$, $u$, and $v$ be as in the second statement, and $k\ge 2$.
    Consider $k$ copies $D_1,\dots, D_k$ of $D$ where $(x_i,y_i)$ forces $(u_i,v_i)$ in
    $D_i$ for each $i\in\{1,\dots, k\}$. It is not hard to notice that
    by considering the disjoint union $D_1 + \cdots + D_k$, and identifying $u_i$ 
    with $x_{i+1}$, and $v_i$ with $y_i$ for $i\in[k-1]$, we obtain a digraph $D'$
    where $(x_1, x_2)$ forces $(u_k,v_k)$, and $d(a,b) \ge k$ for \red{any} $a\in\{x_1,y_1\}$
    and $b\in\{u_k,v_k\}$. Finally,  consider $D'$ together with a disjoint copy
    $D''$ of itself, and the following identifications $u_k' \sim x_1''$, $u_k'' \sim x_1'$,
    $v_k' \sim y_1''$, and $v_k'' \sim y_1'$. It is not hard to see that we obtain a
    digraph $(D' + D'')/_{\sim}$ with two pairs of vertices $(x_1', y_1')$ and $(u_1', v_1')$
    that force each other and  $d(a,b) \ge k$ for \red{any} $a\in\{x_1',y_1'\}$ and $b\in \{u_1',v_1'\}$. 
    The lemma is now proved. 
    \end{proof}

\begin{figure}[ht!]
\centering
\begin{tikzpicture}[scale = 0.8]

\begin{scope}[xshift = -6cm]
    \node [vertex, label = below:{$u$}] (u) at (-1,0) {};
    \node [vertex, label = above:{$x$}] (x) at (-1,2) {};
    \node [vertex, label = above:{$y = v$}] (v) at (1,2) {};
      
    \foreach \from/\to in {u/v, x/v}     
    \draw [edge] (\from) to (\to);

    \draw [arc] (u) to (x);

    \node (L1) at (0,-1) {$D_1$};
    
  \end{scope}

  \begin{scope}[xshift = 0cm]
    \node [vertex, label = above:{$x$}] (x) at (-1,2) {};
    \node [vertex, label = above:{$y$}] (y) at (1,2) {};
    
    \node [vertex, label = below:{$u$}] (u) at (-1,0) {};
    \node [vertex, label = below:{$v$}] (v) at (1,0) {};
      
    \foreach \from/\to in {x/y, u/v, u/y}     
    \draw [edge] (\from) to (\to);

    \foreach \from/\to in {u/x, y/v}     
    \draw [arc] (\from) to (\to);

    \node (L1) at (0,-1) {$D_2$};
    
  \end{scope}

  \begin{scope}[xshift = 6cm]
    \node [vertex, label = above:{$x$}] (x) at (-1,4) {};
    \node [vertex, label = above:{$y$}] (y) at (1,4) {};
    
    \node [vertex, label = below:{$u$}] (u) at (-1,0) {};
    \node [vertex, label = below:{$v$}] (v) at (1,0) {};
    
    \node [vertex] (1) at (-2.6,2) {};
    \node [vertex] (2) at (-.6,2) {};
    \node [vertex] (3) at (.6,2) {};
    \node [vertex] (4) at (2.6,2) {};
    
    \foreach \from/\to in {1/2, 1/y, x/4, 3/4, 3/v, u/2, x/y, u/v}     
    \draw [edge] (\from) to (\to);

    \foreach \from/\to in {1/x, y/2, u/1, 2/v, 3/u, x/3, v/4, 4/y}     
    \draw [arc] (\from) to (\to);

    \node (L1) at (0,-1) {$D_3$};
    
  \end{scope}
  
\end{tikzpicture}
 \caption{Three digraphs $D_1$, $D_2$, and $D_3$ where in each case, the
 pair $(x,y)$ forces the pair $(u,v)$ with respect to $\{\overrightarrow{C_3}\}$. 
 In $D_1$, the cardinality of $\{x,y,u,v\}$ is $3$. In $D_2$,
 $|\{x,y,u,v\}| = 4$, and it is obtained from 
 $D_1$ as in the proof of \cref{lem:mutual-implication}. Finally, in $D_3$,
 $d(a,b) \ge 2$ for \red{any} $a\in\{x,y\}$ and $b\in \{u,v\}$, and $D_3$
 is obtained from $D_2$ as in the proof of \cref{lem:mutual-implication} for
 $k = 2$.}
\label{fig:forced-edges}
\end{figure}
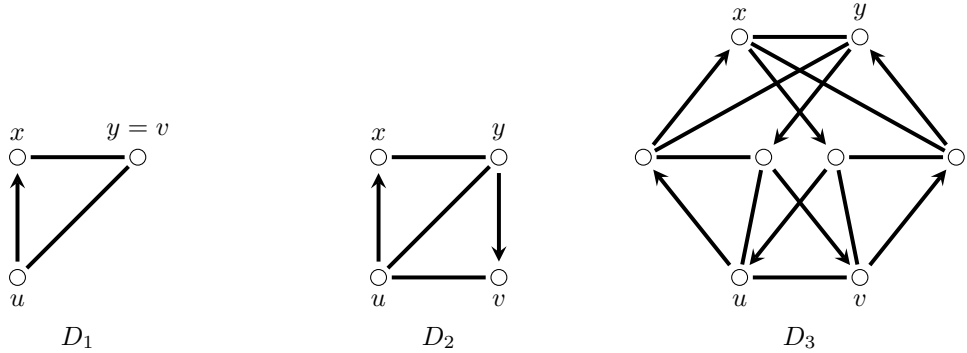

Recall that the $\mathcal{F}$-free orientation completion problem and $\Csp(D_\mathcal{F},
U)$ are trivially polynomial-time equivalent.  We will see that
there is a primitive positive interpretation of $(\bB_\mathcal{F}, {\bf 0},{\bf 1})$ in $(D_\mathcal{F},
U)$. This will yield  a polynomial-time reduction from $\Csp(\bB_\mathcal{F},
{\bf 0},{\bf 1})$ to $\Csp(D_\mathcal{F}, U)$, and thus, also to the $\mathcal{F}$-free
orientation completion problem. To do so, we first consider the $4$-ary relation  $S_4$ defined by
\begin{align}
S_4(x_1, x_2, x_3, x_4) \Leftrightarrow
\big ((E(x_1,x_2) \land E(x_3,x_4))
\lor (E(x_2,x_1) \land E(x_4,x_3)) \big ) .
\label{eq:S4}
\end{align}
Intuitively, $S_4$ encodes that the first and last pair of vertices are adjacent, and
both edges have the same direction. 

\begin{lemma}
\label{lem:pp-definition-S}
For every non-empty finite set of tournaments $\mathcal{F}$, there is a primitive positive
definition of $S_4$ in $(D_\mathcal{F}, U)$.
\end{lemma}
\begin{proof}
    By \cref{lem:forced-edges}, and by the third part of \cref{lem:mutual-implication},
    there is a digraph $D$ with two pairs of vertices $(x,y)$ and $(u,v)$ such that
    $(x,y)$ and $(u,v)$ force each other, and $d(a,b) \ge 4$ for \red{any} $a\in\{x,y\}$ and
    $b\in \{u,v\}$. Interpret $D$ as an $\{E,U\}$-structure $D_U$, where $(x,y)\in E(D_U)$
    if and only if $(x,y)\in E(D)$ and $(y,x)\not\in E(D)$, and $(x,y)\in U(D_U)$ if and only
    if $(x,y)\in E(D)$ and $(y,x)\in E(D)$. In other words, the interpretation of $U$ in $D'$
    corresponds to the symmetric edges of $D$, and the interpretation of $E$ corresponds
    to the anti-symmetric edges of $D$ \red{--- we point out that, contrary to the case of
    $(D_\mathcal F, U)$, in the case of $D_U$ the interpretation 
    of $U$ does not correspond to the symmetric closure of the interpretation of $E$}.
    Let $\phi(x_1,\dots, x_n)$ be the canonical conjunctive
    query of $D_U$, where $x_1$ corresponds to $x$, $x_2$ to $y$, $x_3$ to $u$, and $x_4$ to $v$.
    With this setting, if $\phi_S(x_1,x_2,x_3,x_4)$ is the primitive positive-formula $\exists x_5,
    \dots, x_n\phi(x_1,\dots, x_n)$, then $\phi_S$ implies $S_4$. Now, we briefly argue that
    $S_4$ also implies $\phi_S$. Let $y_1,\dots, y_4$ be four vertices of \red{$D_\mathcal F$} such
    that $S_4(y_1,y_2,y_3,y_4)$, and without loss of generality assume that $(y_1,y_2),(y_3,y_4)\in
    E(D_\mathcal F)$. Suppose that $|\{y_1,y_2,y_3,y_4\}| = 4$ and let $D'$ be \red{an} $\mathcal F$-free
    orientation of $D$ where $(x_1,x_2),(x_3,x_4)\in E(D')$. For each $i,j\in\{1,2,3,4\}$, if there
    is an edge $(y_i,y_j)$ in $D_\mathcal F$, then add an edge $(x_i,x_j)$ to $D'$ obtaining an
    oriented graph $D''$. Notice that since $d(x_i,x_j) \ge 4$ for $i\in\{1,2\}$ and $j\in\{3,4\}$,
    any triangle of $D''$ is a triangle of $D'$ and so, $D''$ is an $\mathcal F$-free oriented graph. 
    Thus, there is partial automorphism that maps $y_i\mapsto x_i$ for $i\in\{1,2,3,4\}$, and by homogeneity
    of $D_\mathcal F$ this can be extended to an automorphism $f\colon D_\mathcal F\to D_\mathcal F$. 
    Since every edge of $D'$ is an edge of $D''$ and $\phi_S$ is a primitive positive formula, it is the case
    that $\phi_S$ is true of $x_1,x_2,x_3,x_4$ in $D_\mathcal F$. Therefore, since primitive positive formulas are
    preserved by automorphisms, and $f^{-1}(x_i) = y_i$ for $i\in\{1,2,3,4\}$, we conclude that
    $D_\mathcal F\models \phi_S(y_1,y_2,y_3,y_4)$. Finally, the cases when $|\{y_1,y_2,y_3,y_4\}|\in
    \{2,3\}$ follow similarly, but instead of obtaining
    $D''$ from $D'$ by adding new edges, we obtain $D''$ from $D'$ by identifying $x_i$ with $x_j$ whenever
    $y_i = y_j$. Again, the observation that $D''$ is a $\mathcal F$-free oriented graph follows from the
    fact that $d(x_i,x_j)\ge 4$ for $i\in\{1,2\}$ and $j\in\{3,4\}$. With the corresponding identifications,
    we obtain a partial automorphism of $D_\mathcal F $ that defined by mapping $y_i\mapsto x_i$ for $i\in\{1,2,3,4\}$.
    Finally, using the fact that primitive positive formulas are preserved under homomorphisms (and $D''$ is a homomorphic
    image of $D'$) and under automorphisms, we conclude that $D_\mathcal F\models \phi_S(y_1,y_2,y_3,y_4)$.
\end{proof}

Now, we show that using the relation $S_4$ we can primitively positively interpret
$(\bB_\mathcal{F}, {\bf 0},{\bf 1})$ in $(D_\mathcal F, U, S_4)$. Also, recall that $H_\mathcal F$
is the underlying graph of $D_\mathcal F$.
We also see that we can primitively positively interpret
$\bB_\mathcal{F}$ in $(H_\mathcal F, S_4)$.

\begin{lemma}
\label{lem:interpret-B}
For every finite set of finite tournaments $\mathcal{F}$, there is a primitive 
positive interpretation of $(\bB_\mathcal{F},{\bf 0},{\bf 1})$ in $(D_{\mathcal F},U,S_4)$,
and a primitive positive interpretation of $\bB_\mathcal F$ in $(H_{\mathcal F},S_4)$.
\end{lemma}
\begin{proof}
    We first consider the primitive positive interpretation of $(\bB_\mathcal{F},{\bf 0},{\bf 1})$ in 
    $(D_{\mathcal F},U,S_4)$. The dimension of the interpretation is $2$, and the domain 
    formula is $\top_I(x,y) := U(x,y)$. Equality ${=_I}(x_1,y_1, x_2,y_2)$ is defined by
     $S_4(x_1,y_1, x_2,y_2)$, and the unary relations ${\bf 0}_I(x,y)$ and ${\bf 0}_I(x,y)$ are
    defined by $E(y,x)$ and $E(x,y)$, respectively. 
    Finally, for each positive integer $n$, the $2{n \choose 2}$-ary relation
    \[ \delta_{P_n}(x_{1,2},y_{1,2},x_{1,3},y_{1,3},\dots, x_{n-1,n},y_{n-1,n}) \]
    expresses that there  are $n$ vertices $k_1,\dots, k_n$
    such that:
    \begin{enumerate}
        \item  $U(k_i,k_j)$ for each $i,j\in [n]$ \red{(i.e., the vertices
        $k_1,\dots, k_n$ must induce a tournament in $D_\mathcal F$, because $D_{\mathcal F}$ is an oriented graph)}, and
        \item for each pair $i < j$ the $4$-tuple $(x_{i,j},y_{i,j},k_i,k_j)$ belongs to 
        $S_4$, i.e., the edges $k_ik_j$ and $x_{i,j} y_{i,j}$ have the same orientation
        in $D_\mathcal F$.
    \end{enumerate}
    The same interpretation without the defining formulas $\delta_0$ and $\delta_1$
    yield a primitive positive interpretation of $\bB_\mathcal F$ in $(H_\mathcal F, S_4)$. Notice 
    that in this case, the first item means that the vertices
    $k_1,\dots, k_n$ induce a clique in $H_\mathcal F$.
\end{proof}

Using these two lemmas, we can easily prove the following statement. 

\begin{proposition}
\label{prop:DFU-interprets-BF01}
    Let $\mathcal F$ be a finite set  of finite tournaments. If $\mathcal F$ is not
    the empty set, then there is primitive positive interpretation of
    $(\bB_F,{\bf 0},{\bf 1})$ in $(D_\mathcal{F}, U)$. 
\end{proposition}
\begin{proof}
    By \cref{lem:pp-definition-S}, $S_4$ has a primitive positive definition in 
    $(D_\mathcal{F}, U)$, and by \cref{lem:interpret-B},
    there is primitive positive interpretation of $(\bB_\mathcal F, {\bf 0},{\bf 1})$ in
    $(D_\mathcal{F}, U,S_4)$. 
\end{proof}

\cref{thm:reduction-to-boolean} asserts that the $\mathcal F$-free orientation completion
problem reduces in polynomial-time to $\Csp(\bB_\mathcal F, {\bf 0},{\bf 1})$. 
Also, as mentioned in \cref{sect:MT}, the $\mathcal F$-free orientation
completion problem and $\Csp(D_\mathcal F, U)$ are polynomial-time
equivalent. Finally, \cref{prop:DFU-interprets-BF01} together with
\cref{lem:pp-interpret-reduce} show that $\Csp(B_\mathcal F,{\bf 0},{\bf 1})$
reduces in polynomial-time to $\Csp(D_\mathcal F, U)$. Thus, the following statement
is proved by the arguments in this paragraph.

\begin{theorem}\label{thm:or-comp-CSP}
    The following problems are polynomial-time equivalent
    for each set finite set of finite tournaments $\mathcal F$.
    \begin{enumerate}
        \item The $\mathcal F$-free orientation completion problem.
        \item $\Csp(\bB_\mathcal{F},{\bf 0},{\bf 1})$.
        \item $\Csp(D_\mathcal F, U)$.
    \end{enumerate}
\end{theorem}

If a Boolean structure $\bB$ contains no constant endomorphism, then it is a core, 
and hence $\Csp(\bB)$ and $\Csp(\bB,{\bf 0},{\bf 1})$ are polynomial-time equivalent (see, e.g.,~\cite{Book}).
Notice that for a set of tournaments $\mathcal{F}$ the structure $\bB_\mathcal{F}$
contains a constant endomorphism if and only if for each $n\le m_\mathcal F$ either 
$P_n$ is empty or $P_n$ contains  both constant tuples \red{$(0,\dots, 0)$ and $(1,\dots, 1)$}.
In particular, if $\mathcal F$ contains a transitive tournament and there is
at least one $\mathcal F$-free tournament on tournament on $n_\mathcal F$ vertices,
then $P_{n_\mathcal F}$ is neither empty nor contains a constant tuple, and thus, $\bB_\mathcal F$ 
contains no constant endomorphism. With these arguments in mind, the following statement
 is an immediate implication of \cref{thm:or-comp-CSP}.

 \begin{corollary}\label{thm:or-comp-CSP-no-Tk}
    Let $\mathcal F$ be a finite set of finite tournaments that contains a transitive tournament. 
    If  there is at least one $\mathcal F$-free tournament with $n_\mathcal F$-vertices, then
    the following problems are polynomial-time equivalent.
    \begin{enumerate}
        \item The $\mathcal F$-free orientation completion problem.
        \item $\Csp(\bB_\mathcal{F},{\bf 0},{\bf 1})$.
        \item $\Csp(\bB_\mathcal F)$.
        \item $\Csp(D_\mathcal F, U)$.
    \end{enumerate}
\end{corollary}

\subsection{Complexity Classification}

\cref{thm:or-comp-CSP} together with Schaefer's theorem yield a
classification of the $\mathcal{F}$-free orientation completion problem
in terms of the Boolean structure $(\bB_\mathcal F,{\bf 0},{\bf 1})$. In this
section, we see that if $(\bB_F,{\bf 0},{\bf 1})$ does not primitively positively
interpret $K_3$, then it is preserved by the Boolean minority operation, or by a 
constant operation.

\begin{lemma}\label{lem:Tn-collapse}
    Let $\mathcal F$ be a finite set of finite tournaments. The following statements are equivalent
    for each positive integer $n \le m_\mathcal F$ such that $T_n$ is $\mathcal F$-free.
    \begin{enumerate}
        \item Every tournament on $n$ vertices is $\mathcal F$-free.
        \item $P_n = \{0,1\}^{n \choose 2}$.
        \item $P_n$ is preserved by the minimum operation.
        \item $P_n$ is preserved by the maximum operation.
        \item $P_n$ is preserved by the majority operation. 
        \item $P_n$ is preserved by the minority operation.
    \end{enumerate}
\end{lemma}
\begin{proof}
    The first two items are clearly equivalent, and the second item implies 3--6.
    Denote by $b_{\bf 0}$ (resp.\ $b_{\bf 1}$) the constant $0$ (resp.\ constant $1$) 
    tuples of arity $n \choose 2$. Since $T_n$ is $\mathcal F$-free, the tuples $b_{\bf 0}$
    and $b_{\bf 1} $ belong to $P_n$.
    It is not hard to notice that for any pair of tuples $b,b'\in P_n$
    the equalities $\majority(b_{\bf 1}, b, b') = \max(b, b')$, and 
    $\majority(b_{\bf 0}, b, b') = \min(b, b')$ hold. Thus, if $P_n$ is preserved
    by the majority operation, then it is preserved by the minimum and the maximum operations. 

    To conclude the proof, we show that each of the statements 3, 4, 6 imply the first two. 
    Suppose that $P_n$ is preserved by the minimum operation. For $i,j\in [n]$ with $i < j$,
    we denote by $b^{ij}$ the tuple where $(b^{ij})_{kl} = 1$ if and only if $i = k$ and $j = l$.
    We show that each $b^{ij}$ belongs to $P_n$. Given $i < j$ consider the
    following  permutations of $T_n$ where the edge set is defined by the linear ordering of $[n]$:
    \begin{align*}
    T^1 & :=(n,n-1, \dots, j+1, j-1,j-2, \dots, i, j, i-1, i-2,\dots 1), \\ \text{ and } \quad 
    T^2 & :=(n,n-1, \dots, j+1, i, j, j-1,\dots, i+1, i-1, i-2, \dots, 1).
    \end{align*}
    With a simple computation of the minimum operation one can notice that  $\min(b_{T^1},b_{T^2}) = b^{ij}$.
    Since $T_n$ is $\mathcal F$-free, it follows that $b_{T^1},b_{T^2}
    \in P_n$, and so $b^{ij}\in P_n$. It is not hard to notice that the tuple $b^{ij}$ encode
    those tournaments obtained from $T_n$ be reversing the orientation of one arc. Similarly, the
    tuples $c^{ij}$ where $(c^{ij})_{kl} = 0$ if and only if $i = k$ and $l = j$, encode
    the same family of tournaments (up to isomorphism). Thus, it is also the case
    that for each $i,j\in [n]$ with $1\le i < j \le n$, all tuples $c^{ij}$ belong to $P_n$.
    Finally, consider a set of pairs
    $\{(i_1,j_1), \dots, (i_l,j_l)\}\subseteq [n]^2$ with $i_k < j_k$ for each $k\in [l]$.
    By   composing the minimum operation as $\min(c^{i_1j_1},\min(c^{i_2j_2},
    \dots ))$, we obtain a tuple $b$ where $b_{ij} = 0$ if and only if $(i,j) = (i_k,j_k)$
    for some $k \in [l]$. Thus, we conclude that each $b\in  \{0,1\}^{n \choose 2}$ belongs to $P_n$,
    i.e., $P_n = \{0,1\}^{n \choose 2}$, and so, the third item implies the second one. The case when
    $\mathcal{F}$ is preserved by the maximum operation follows with dual arguments. 

    Finally, suppose that $P_n$ is preserved by the minority operation. For a pair of tuples
    $b,b'$ we denote by $b + b'$ the coordinate-wise addition modulo $2$, and notice that
    $\minority(b_{\bf 0},b,b') = b + b'$. Thus, since $P_n$ is preserved
    by the minority operation, and $b_{\bf 0}\in P_n$, we conclude that $P_n$ is closed
    under addition modulo $2$. Evidently, every tuple $b \in \{0,1\}^{n\choose 2}$
    can be expressed as a sum of tuples of the form $b^{ij}$ (introduced in the previous
    paragraph). Hence, it suffices to show that $b^{ij}\in P_n$ for each pair $i<j$. To do
    so, consider the  following  permutations of $T_n$ where the edge set is define by the
    linear ordering of $[n]$:
    \begin{align*}
    T^1 & :=(1, \dots, i-1, i, i+2, \dots, j, i+1, j+1, j+2, \dots, n), \\ \text{ and } \quad 
    T^2 & :=(1, \dots, i-1, i+1, i+2, \dots, j, i, j+1, j+2, \dots, n).
    \end{align*}
    It is not hard to notice that $b^{ij} = b_{T^1} + b_{T^2}$, and since  $T_n$ 
    is $\mathcal F$-free and $P_n$ is closed under addition modulo $2$, we conclude
    that $b^{ij} \in P_n$. So, by the arguments above we conclude that $P_n = \{0,1\}^{n
    \choose 2}$. The equivalence between 1--6 is now proved. 
\end{proof}

Recall that $m_\mathcal F$ denotes the maximum number of vertices of a tournament in 
$\mathcal F$. 

\begin{lemma}\label{lem:collapse}
    Let $\mathcal F$ be a finite set of finite tournaments.  The following statements are 
    equivalent for each positive integer $n \le m_\mathcal F$. 
    \begin{enumerate}
        \item Either all tournaments on $n$ vertices are $\mathcal F$-free, or no tournament on
        $n$ vertices is $\mathcal F$-free.
        \item Either $P_n = \varnothing$ or $P_n = \{0,1\}^{n \choose 2}$.
        \item $P_n$ is preserved by the minimum operation.
        \item $P_n$ is preserved by the maximum operation.
        \item $P_n$ is preserved by the majority operation.
    \end{enumerate}
\end{lemma}
\begin{proof}
    It is evident that the first two items are equivalent, and that each of these implies
    the rest. We show that each of 3--5 imply the first two. To do so, we will see
    that in each case,  if $P_n \neq \varnothing$, then $P_n$ contains the constant tuple
    $b_{\bf 1}$ (where all entries are $1$),  i.e., $T_n$ is $\mathcal F$-free. We will thus
    conclude by \cref{lem:Tn-collapse} that $P_ n =  \{0,1\}^{n \choose 2}$.
    To begin with, suppose that  $P_n$ is preserved by the maximum operation, and that there
    is some tuple  $b_T\in P_n$ for some $\mathcal F$-free tournament $T$ with vertex set $[n]$. 
    If $b_T = b_{\bf 1}$, there is nothing left to prove, so suppose $b_{ij} = 0$ for some
    $1\le i < j \le n$. Let $T'$ be the permutation of $T$ obtained from transposing
    $i$ with $j$. Since $(b_T)_{ij} = 0$ and $(b_{T'})_{ij} = 1$, it follows that  the
    number of coordinates which equal $1$ in
    $\max(b_T, b_{T'})$ is strictly larger than those which equal $1$ in $b_T$. 
    Since $P_n$ is preserved by maximum operation, we can iterate this procedure to see
    that $b_{\bf 1} \in P_n$. Thus $T_n$ is $\mathcal F$-free and so, using 
    \cref{lem:Tn-collapse} we conclude that $P_n = \{0,1\}^{n \choose 2}$.
    The case when $P_n$ is preserved by the minimum operation follows from  dual arguments. 
    
    Finally, suppose that $P_n$ is preserved by majority, and that there is some $\mathcal F$-free
    tournament with vertex set $[n]$, i.e., $b_T\in P_n$. We begin by showing that there is a tuple
    $b_{T'}\in P_n$ such that $(b_{T'})_{1j} = 1$ for all $1 < j \le n$. Let $l$ be the maximum
    outdegree of $T$, and suppose $l < n-1$ (otherwise, there is nothing left to prove). 
    Consider two permutations $T^1$ and $T^2$ of  $T$ such that for $i\in \{1,2\}$, the vertex $1$ is the vertex
    of largest outdegree of $T^i$ and $\{i+1, \dots, i+l\}$ are its outneighbours. Consider a
    third labeling $T^3$ such that $(1,2), (1,i+1) \in E(T^3)$. Clearly, if $b = \majority(b_{T^1},b_{T^2},b_{T^3})$,
    then $b_{1j} = 1$ for all $2\le j \le l+1$. Since $P_n$ is preserved by the majority operation, we can
    proceed inductively to find a tuple $b \in P_n$ such that $b_{1j} = 1$ for all $1 < j \le n$.
    With a similar finite inductive argument over $k \in [n]$, we can find a tuple $b \in P_n$ such that
    $b_{ij} = 1$ for all $i\le k$ and $j > i$. Thus, we conclude that $b_{\bf 1} \in P_n$ and so, 
    $T_n$ is  $F$-free. Hence, by \cref{lem:Tn-collapse}, we conclude that $P_n = \{0,1\}^{n \choose 2}$.
    The claim follows.
\end{proof}

Building on \cref{lem:collapse}, we prove the following statement.

\begin{lemma}\label{lem:minority-BF}
    Let $\mathcal F$ be a finite set of finite tournaments. If
    $\bB_{\mathcal F}$ does not interpret $K_3$ primitively positively,
    then $\bB_{\mathcal F}$ is preserved by the Boolean minority operation or a constant operation. 
\end{lemma}
\begin{proof}
    If $\bB_{\mathcal F}$ does not interpret $K_3$ primitively positively, then, by Schaefer's theorem, 
    $\bB_\mathcal F$ is preserved by the minimum, the maximum, the majority, the minority, or the constant
    operation. If either of the last two cases holds, the claim is proved. Otherwise, suppose that 
    $\bB_\mathcal F$ is preserved by the minimum, the maximum, or the majority operation. 
    Then, for each $n\le m_\mathcal F$ the relation $P_n$ is preserved by one of these operations. 
    By \cref{lem:collapse}, we conclude that for each $n \le m_\mathcal F$ either $P_n =\varnothing$
    or $P_n = \{0,1\}^{n \choose 2}$. Hence, each $P_n$ is trivially preserved by any constant operation,
    and so, $\bB_{\mathcal F}$ is preserved by a constant operation. 
\end{proof}


We are now ready to state the proposed classification for the complexity of $\mathcal F$-free
orientation completion problems.

\begin{theorem}\label{thm:or-comp-classification}
    Let ${\mathcal F}$ be a finite set of finite tournaments. 
    Then exactly one of the following two cases applies.
    \begin{itemize}
        \item $K_3$ has a primitive positive interpretation in $({\mathfrak B}_{\mathcal F},\bf{0},\bf{1})$ and in
        $(D_{\mathcal F},U)$. In this case, $\Csp(D_{\mathcal F},U)$ and the
        ${\mathcal F}$-free orientation completion problem are NP-complete.  
        \item $(B_\mathcal F, \bf 0, \bf 1)$ has the the minority operation as polymporphism, and 
        $(D_{\mathcal F},U)$ has a ternary pseudo \red{weak} near unanimity polymorphism \red{(see Equation~\eqref{eq:pwnu})}. In this case, 
        $\Csp(D_{\mathcal F},U)$ and the ${\mathcal F}$-free orientation \red{completion} problem are in P. 
    \end{itemize} 
\end{theorem}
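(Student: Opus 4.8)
The plan is to combine the reduction machinery already established with the two collapse lemmas and Schaefer's theorem, and then transfer the resulting polymorphism from the Boolean world back to $(D_{\mathcal F},U)$.

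First I would argue the two cases are mutually exclusive: if $(\mathfrak B_{\mathcal F},\mathbf 0,\mathbf 1)$ has the Boolean minority as a polymorphism then, by Lemma~\ref{lem:minority-leqs}, $\Csp(\mathfrak B_{\mathcal F},\mathbf 0,\mathbf 1)$ reduces to solving linear equations over $\mathbb F_2$ and is in P, hence it cannot interpret $K_3$ primitively positively (otherwise P${}={}$NP, or more carefully: a pp-interpretation of $K_3$ would make the CSP NP-hard by Corollary~\ref{cor:pp-interpret-hard}, contradicting tractability unless P${}={}$NP — but in fact pp-interpretability of $K_3$ is incompatible with having minority as a polymorphism purely algebraically, since minority is a pseudo-WNU and $K_3$ has no WNU polymorphism, and pp-interpretations preserve the relevant algebraic obstructions). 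So the two bullets cannot both hold.

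Next, for the dichotomy itself: by Schaefer's theorem (Theorem~\ref{thm:schaefer}), either $\mathfrak B_{\mathcal F}$ (and hence also the richer structure $(\mathfrak B_{\mathcal F},\mathbf 0,\mathbf 1)$, whose pp-interpretations are at least as powerful) interprets $K_3$ primitively positively, or $\mathfrak B_{\mathcal F}$ has one of the four listed polymorphisms. In the first case, by Proposition~\ref{prop:DFU-interprets-BF01} (valid when ${\mathcal F}\neq\varnothing$; the case ${\mathcal F}=\varnothing$ is trivial since then $\mathfrak B_\varnothing$ has all tuples and a constant polymorphism, landing in the second bullet) we get a pp-interpretation of $(\mathfrak B_{\mathcal F},\mathbf 0,\mathbf 1)$ in $(D_{\mathcal F},U)$, and composing pp-interpretations yields one of $K_3$ in $(D_{\mathcal F},U)$; then Corollary~\ref{cor:pp-interpret-hard} and Theorem~\ref{thm:or-comp-CSP} give NP-completeness of $\Csp(D_{\mathcal F},U)$ and of the ${\mathcal F}$-free orientation completion problem (membership in NP being noted in the introduction). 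In the second case, Lemma~\ref{lem:minority-BF} upgrades ``one of the four polymorphisms'' to ``minority or a constant operation''; a constant operation on a domain that includes the singletons $\mathbf 0,\mathbf 1$ is impossible for $(\mathfrak B_{\mathcal F},\mathbf 0,\mathbf 1)$ unless every $P_n$ is empty, in which case one checks $\mathfrak B_{\mathcal F}$ still has the minority polymorphism vacuously (and the completion problem is trivially in P anyway), so we may assume $(\mathfrak B_{\mathcal F},\mathbf 0,\mathbf 1)$ has the Boolean minority polymorphism. Tractability of $\Csp(\mathfrak B_{\mathcal F},\mathbf 0,\mathbf 1)$, hence of the completion problem, then follows from Schaefer/Lemma~\ref{lem:minority-leqs} and Theorem~\ref{thm:or-comp-CSP}.

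The main obstacle — and the step that needs genuine work rather than bookkeeping — is producing the ternary pseudo weak near unanimity polymorphism of $(D_{\mathcal F},U)$ claimed in the second bullet. Tractability of the associated finite-domain CSP does not by itself hand us a polymorphism of the infinite structure $(D_{\mathcal F},U)$; we need to construct one concretely. The plan here is to use the homogeneity and finite boundedness of $D_{\mathcal F}$: one builds, by a back-and-forth / amalgamation argument over finite substructures, an embedding $f\colon (D_{\mathcal F},U)^3\to(D_{\mathcal F},U)$ behaving like a ``canonical'' operation, using the fact that the Boolean minority controls how orientations of cliques combine (so that applying $f$ coordinatewise to three ${\mathcal F}$-free configurations never creates a forbidden tournament, precisely because the coded Boolean relations $P_n$ are closed under the minority which is the sum mod $2$); then one identifies self-embeddings $e_1,e_2,e_3$ of $(D_{\mathcal F},U)$ witnessing Equation~\eqref{eq:pwnu}, again via homogeneity, exactly as sketched for the Rado graph in Example~\ref{expl:Rado-pwnu}. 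I expect this construction to be the technical heart, and the rest of the theorem to be a short assembly of the lemmas above.
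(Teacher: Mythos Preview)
Your overall structure matches the paper's, and the mutual-exclusivity argument and the pseudo-WNU construction sketch are both on the right track. However, there is a genuine gap in your case analysis.

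You apply Schaefer's theorem to $\mathfrak B_{\mathcal F}$ and then try to push the conclusion over to $(\mathfrak B_{\mathcal F},\mathbf 0,\mathbf 1)$. The problematic branch is when Lemma~\ref{lem:minority-BF} hands you a \emph{constant} polymorphism of $\mathfrak B_{\mathcal F}$. You claim this forces every $P_n$ to be empty, but that is false: constant~$0$ preserves $P_n$ as soon as $P_n$ is empty \emph{or} contains the all-zero tuple. Take ${\mathcal F}=\{\overrightarrow{C_3}\}$: here $P_3=\{0,1\}^3\setminus\{(1,0,1),(0,1,0)\}$ contains $(0,0,0)$, so $\mathfrak B_{\mathcal F}$ has constant~$0$ as a polymorphism and does \emph{not} interpret $K_3$; yet minority fails on $P_3$ (e.g.\ $\operatorname{minority}((1,1,0),(0,1,1),(1,1,1))=(0,1,0)\notin P_3$). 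So $(\mathfrak B_{\mathcal F},\mathbf 0,\mathbf 1)$ has neither constant nor minority, and in fact interprets $K_3$ (this is the NP-complete case of Corollary~\ref{cor:3-vertices-or-com}). Your argument would wrongly try to place this example in the second bullet.

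The fix, which is what the paper does, is to apply Schaefer directly to $(\mathfrak B_{\mathcal F},\mathbf 0,\mathbf 1)$. Constant is then excluded outright by the presence of $\mathbf 0$ and $\mathbf 1$, so one of $\min$, $\max$, majority, or minority preserves $(\mathfrak B_{\mathcal F},\mathbf 0,\mathbf 1)$, hence also $\mathfrak B_{\mathcal F}$. Now Lemma~\ref{lem:collapse} (the engine behind Lemma~\ref{lem:minority-BF}) shows that $\min$/$\max$/majority force each $P_n$ to be empty or all of $\{0,1\}^{\binom n2}$, and such relations are trivially preserved by minority; so in every surviving branch $(\mathfrak B_{\mathcal F},\mathbf 0,\mathbf 1)$ has minority. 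With that correction in place, the rest of your outline goes through essentially as in the paper.
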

\begin{proof}
    The two cases are mutually disjoint: if $(B_\mathcal F, \bf 0, \bf 1)$ is has the minority operation as polymorphism,
    then every structure with a first-order interpretation in it has such a polymorphism as well
    (see, e.g., Corollary 6.5.16 in~\cite{Book}), but $K_3$ does not have such a polymorphism (see, e.g., Proposition 6.1.43  in~\cite{Book}). 
    
    Now we see that one of the two cases holds, and first suppose that $K_3$ has a primitive positive interpretation in
    $(\bB_{\mathcal F},{\bf 0},{\bf 1})$. Since $(\bB_{\mathcal F},{\bf 0},{\bf 1})$ has a primitive positive interpretation in
    $(D_{\mathcal F},U)$, by composing these interpretations, we obtain a  primitive positive interpretation of $K_3$ in
    $(\bB_{\mathcal F}, {\bf 0},{\bf 1})$. The NP-hardness of $\Csp(D_{\mathcal F},U)$ and the ${\mathcal F}$-free orientation completion
    problem now follows from Lemma~\ref{lem:pp-interpret-reduce}.

    Otherwise, if $K_3$ does not have a primitive positive interpretation in $(\bB_{\mathcal F},{\bf 0},{\bf 1})$, then Schaefer's theorem 
    (Theorem~\ref{thm:schaefer}) implies that $\Csp(\bB_\mathcal{F},{\bf 0}, {\bf 1})$ is in P, and 
    $(\bB_\mathcal{F},{\bf 0}, {\bf 1})$ has a polymorphism which is a constant operation, or the minimum, maximum, 
    majority, or minority operation.
    \cref{lem:minority-BF}, and the obvious fact that $(\bB_\mathcal{F},{\bf 0}, {\bf 1})$ cannot be preserved by constant operations, imply that
    $(\bB_\mathcal{F},{\bf 0}, {\bf 1})$ has a minority polymorphism. In this case we construct the ternary pseudo \red{weak} near unanimity polymorphism of $(D_{\mathcal F},U)$ as follows. Consider the digraph 
    with domain $V^3$ and edge set 
    $$\big \{((u_1,u_2,u_3),(v_1,v_2,v_3)) \mid (u_1 ,v_1),(u_2,v_2),(u_3,v_3) \in U,~|\{i \mid (u_i,v_i) \in E\}| \in \{0,2\} \big \}.$$ 
    Since $\bB_\mathcal{F}$ has a minority polymorphism, every finite subgraph $F$ of this graph has an embedding $h$ to $D_{\mathcal F}$,
    and by the homogeneity of $D_{\mathcal F}$ there exist automorphisms $e_1,e_2$
    of $D_{\mathcal F}$
    such that for all $x,y \in V$ with 
    $(x,x,y),(x,y,x),(y,x,x) \in V(F)$ we have $e_1(h(x,x,y) = e_2(h(x,y,x)) = h(y,x,x)$.
    The existence of a ternary pseudo weak near unanimity polymorphism can be shown as  Proposition 6.6 in~\cite{BPP-projective-homomorphisms}.
    The polynomial-time tractability of $\Csp(D_{\mathcal F},U)$ and the
        ${\mathcal F}$-free orientation completion problem now follows from the polynomial-time tractability of $\Csp(\bB_\mathcal{F},{\bf 0}, {\bf 1})$ via Theorem~\ref{thm:reduction-to-boolean}.
\end{proof}


\section{Symmetries}
\label{sect:sym}
Recall that, given a set of tournaments $\mathcal{F}$, we denote by $D_\mathcal{F}$ the
countable universal homogeneous $\mathcal{F}$-free digraph, and by $H_\mathcal{F}$ its 
its underlying graph.
In order to classify the complexity of the $\mathcal F$-free orientation problem,
it will be highly useful to understand the symmetries of $H_\mathcal F$ in terms
of the symmetries of $D_\mathcal F$. Specifically, we use the description
of the automorphism group of $H_\mathcal F$ in terms of the automorphism group of
$D_\mathcal F$ proposed by Agarwal and Kompatscher~\cite{AgarwalKompatscher}.
To provide examples of these symmetries we consider the four non-isomorphic
tournaments on $4$ vertices $T_4$, $TC_4$, $C_3^-$, and $C_3^+$
depicted in~\cref{fig:four-vertices}.

\begin{figure}[ht!]
\centering
\begin{tikzpicture}

  \begin{scope}[xshift = -4.5cm, scale = 0.8]
    \node [vertex] (1) at (-1,2) {};
    \node [vertex] (2) at (1,2) {};
    \node [vertex] (4) at (-1,0) {};
    \node [vertex] (3) at (1,0) {};
    \node (L1) at (0,-1) {$T_4$};
      
    \foreach \from/\to in {1/2, 1/3, 1/4, 2/3, 2/4, 3/4}     
    \draw [arc] (\from) to (\to);
  \end{scope}

  \begin{scope}[xshift = -1.5cm, scale = 0.8]
    \node [vertex] (1) at (-1,2) {};
    \node [vertex] (2) at (1,2) {};
    \node [vertex] (4) at (-1,0) {};
    \node [vertex] (3) at (1,0) {};
    \node (L1) at (0,-1) {$TC_4$};
      
    \foreach \from/\to in {1/2, 1/3, 4/1, 2/3, 2/4, 3/4}     
    \draw [arc] (\from) to (\to);
  \end{scope}

  \begin{scope}[xshift = 1.5cm, scale = 0.8]
    \node [vertex] (1) at (-1,2) {};
    \node [vertex] (2) at (1,2) {};
    \node [vertex] (4) at (-1,0) {};
    \node [vertex] (3) at (1,0) {};
    \node (L1) at (0,-1) {$C_3^-$};
      
    \foreach \from/\to in {1/2, 1/3, 1/4, 2/3, 4/2, 3/4}     
    \draw [arc] (\from) to (\to);
  \end{scope}

  \begin{scope}[xshift = 4.5cm, scale = 0.8]
    \node [vertex] (1) at (-1,2) {};
    \node [vertex] (2) at (1,2) {};
    \node [vertex] (4) at (-1,0) {};
    \node [vertex] (3) at (1,0) {};
    \node (L1) at (0,-1) {$C_3^+$};
      
    \foreach \from/\to in {2/1/, 3/1, 4/1, 2/3, 4/2, 3/4}     
    \draw [arc] (\from) to (\to);
  \end{scope}

\end{tikzpicture}
\caption{The four non-isomorphic oriented tournaments on $4$ vertices}
\label{fig:four-vertices}
\end{figure}
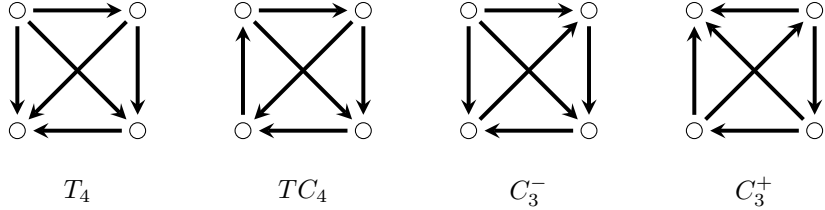

If $D$ is a digraph, then we denote by $-D$ the digraph obtained by flipping the
orientation of all arcs of $D$, and we call it the \textit{flip} of $D$. It is
not hard to notice that if $D$ is a $T_k$-free digraph,  then $-D$ is also $T_k$-free. 
The following example shows that there are finite sets of tournaments
that are not preserved by flips. 

\begin{example}
Note that $-(C_3^-)$ is isomorphic to $C_3^+$.
Hence, $C_3^-$ is a $\{T_4, C_3^+\}$-free oriented graph whose flip is not 
$\{T_4, C_3^+\}$-free.
\end{example} 

 There is a second source of possible symmetries between 
$\mathcal{F}$-free oriented graphs. Given  a vertex $a\in V(D)$, the \emph{switch of
$D$ with respect to $a$} is the oriented graph obtained by switching the orientation
of all arcs incident to $a$. We denote the resulting  oriented graph by $sw_a(D)$. 

\begin{example}
The digraph \red{$TC_4$} is the switch of $T_4$ with respect
$v$, where $v$ is any vertex of $T_4$ with positive in- and out-degree. 
\end{example} 

We say that a class of digraphs $\mathcal{C}$ is \emph{preserved by switches} (resp., flips) if for every $D\in \mathcal{C}$
and each $a\in V(D)$ it is the case that $\mathcal{C}$ contains a digraph isomorphic to  $sw_a(D)$ (resp., $-D$). 
It is evident that the class of $\mathcal{F}$-free digraphs is preserved by switches (resp., flips)
if and only if $\mathcal{F}$ is preserved by switches (resp., flips). 
For instance, the previous example shows that the class of $T_4$-free graphs is preserved by flips but not by switches.

Since $H_\mathcal F$ is the underlying graph of $D_\mathcal F$, every automorphism of
$D_\mathcal{F}$ is an automorphism of $H_\mathcal{F}$. 
Agarwal and Kompatscher~\cite{AgarwalKompatscher} gave a description of 
the possible automorphism groups of $H_{\mathcal F}$ in terms of the automorphism group
of $D_{\mathcal F}$, and a few additional permutations that we introduce in the following paragraphs.


For any set $X$, 
we denote by $\Sym(X)$ the permutation group of all \red{permutations of $X$}. If
$-D_\mathcal{F} \cong D_\mathcal{F}$, we denote by $\mi$ any isomorphism
$\mi \colon D_\mathcal{F}\to -D_\mathcal{F}$. In other words, $\mi
\colon V(D_\mathcal{F})\to V(D_\mathcal{F})$ is a bijection such that $(x,y)\in 
E(D_\mathcal{F})$ if and only if $(\mi(y),\mi(x))\in E(D_\mathcal{F})$.
Similarly, if $sw_a(D_\mathcal{F})\cong D_\mathcal{F}$ for some $a\in V(D_\mathcal{F})$,
we denote by $\sw$ any isomorphism $\sw \colon D_\mathcal{F}\to sw_a(D_\mathcal{F})$. That is, 
$\sw \colon V(D_\mathcal{F})\to V(D_\mathcal{F})$ is a bijection such that $(a,x)\in 
E(D_\mathcal{F})$ if and only if $(\sw(x),\sw(a))\in E(D_\mathcal{F})$, 
$(x,a)\in  E(D_\mathcal{F})$ if and only if $(\sw(a),\sw(x))\in E(D_\mathcal{F})$, 
and whenever $x\neq a\neq y$, there is an edge $(x,y)\in  E(D_\mathcal{F})$ if and
only if $(\sw(x),\sw(y))\in E(D_\mathcal{F})$. 

\begin{lemma}\label{lem:flip}
 The following statements are equivalent for a finite set of finite tournaments $\mathcal{F}$.
 \begin{enumerate}
     \item $\mathcal{F}$ is preserved by flips.
     \item The class of $\mathcal{F}$-free digraphs is preserved by flips. 
     \item $D_\mathcal{F}$ is isomorphic to $-D_\mathcal{F}$.
     \item $\mi \colon V(D_\mathcal{F})\to V(D_\mathcal{F})$ exists and $\mi \in
     \Aut(H_\mathcal{F})$.
 \end{enumerate}
\end{lemma}
\begin{proof}
The first two items are clearly equivalent. 
It is immediate to notice that if $\mi \colon V(D_\mathcal{F})\to
V(D_\mathcal{F})$ exists, then $\mi$ defines an automorphism of $H_\mathcal{F}$.
By the definition of $\mi \colon
V(D_\mathcal{F})\to V(D_\mathcal{F})$, the last two items are clearly equivalent. 
\red{Since a finite digraph embeds into $D_{\mathcal F})$ if and only if it is ${\mathcal F}$-free, if $D_{\mathcal F}$ is isomorphic to 
$-D_{\mathcal F}$, then the class of ${\mathcal F}$-free graphs is closed under flips, so item three implies item two. 
Conversely, if the class of finite digraphs that embeds into $D_{\mathcal F}$ is closed under flips, then by the homogeneity of $D_{\mathcal F}$ we can construct an isomorphism between $D_{\mathcal F}$ and $-D_{\mathcal F}$ inductively.} 
\end{proof}

\begin{lemma}\label{lem:switch}
 The following statements are equivalent for a finite set of finite tournaments $\mathcal{F}$.
 \begin{enumerate}
     \item $\mathcal{F}$ is preserved by switches.
     \item The class of $\mathcal{F}$-free digraphs is preserved by switches.
     \item $D_\mathcal{F}$ is isomorphic to $sw_a(D_\mathcal{F})$ for some $a\in
     V(D_\mathcal{F})$.
     \item $D_\mathcal{F}$ is isomorphic to $sw_a(D_\mathcal{F})$ for each $a\in
     V(D_\mathcal{F})$.
     \item $\sw \colon V(D_\mathcal{F})\to V(D_\mathcal{F})$ exists and $\sw \in
     \Aut(H_\mathcal{F})$.
 \end{enumerate}
\end{lemma}
\begin{proof}
The first two statements are clearly equivalent, and the last two are equivalent by the definition of $\sw$.
\red{Now suppose that the class of ${\mathcal F}$-free digraphs is preserved by switches. 
Then for any $a \in V(D_{\mathcal F})$ an isomorphism between $D_{\mathcal F}$ and $\sw_a(D_{\mathcal F})$ can be constructed inductively by the homogeneity of $D_{\mathcal F}$.
Hence, 2. implies 4. Trivially, 4. implies 3., and 3. implies 2 follows from the definitions.} 
\end{proof}

As anticipated, the following 
 statement asserts that $\Aut(H_\mathcal{F})$ is either $\Aut(D_\mathcal{F})$ or
the smallest closed supergroup of  $\Aut(D_\mathcal{F})$ that contains $\mi$, $\sw$, or both. 
The following statement is an adaptation of the third statement of Theorem 2.2 in~\cite{AgarwalKompatscher}
to the notation of the present work.

\begin{theorem}[Theorem 2.2(iii) in \cite{AgarwalKompatscher}]
\label{thm:AK}
    Let $\mathcal F$ be a finite set of finite tournaments. 
    Then, $H_\mathcal F$ is the Rado graph, $H_\mathcal F$ is a Henson graph, or $H_\mathcal F$ is not homogeneous. In the last case, the automorphism group of $H_{\mathcal F}$ equals one of the permutation groups from the following list.
    \begin{enumerate} 
        \item $\Aut(D_{\mathcal F})$.
        \item $\langle \Aut(D_{\mathcal F}) \cup \{\mi\} \rangle$.
        \item $\langle \Aut(D_{\mathcal F}) \cup \{\sw\} \rangle$.
        \item $\langle \Aut(D_{\mathcal F}) \cup \{\mi,\sw\} \rangle$.
    \end{enumerate}  
\end{theorem}

Our approach for proving dichotomy for the $\mathcal{F}$-free orientation problem
will follow a case distinction on $\Aut(H_\mathcal{F})$ according to the
cases listed above. For this, it will be convenient to describe for which
sets $\mathcal F$ the graph $H_\mathcal F$ is the Rado graph or a Henson graph.

\begin{lemma}\label{lem:Rado-Henson}
The following statements hold for a finite set of finite tournaments $\mathcal F$.
\begin{enumerate}
    \item \red{$\Csp(H_\mathcal F)$ is the class of all loopless graphs} if and only if $\mathcal F$ contains no transitive tournament.
    \item \red{$\Csp(H_\mathcal F)$ is the class of $K_n$-free graphs}  if and only if 
    \begin{itemize}
        \item there is no $\mathcal F$-free tournament with $n$ vertices,
        \item $\mathcal F$ contains a transitive tournament, and 
        \item $n_\mathcal F = n$. 
        \end{itemize}
\end{enumerate}
\end{lemma}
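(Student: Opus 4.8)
The plan is to prove both statements by relating the CSP of $H_{\mathcal F}$ to the Fra{\"i}ss\'e-theoretic structure of $D_{\mathcal F}$, via the fact (stated in \cref{sect:MT}) that a finite graph $G$ has a homomorphism to $H_{\mathcal F}$ if and only if $G$ has an ${\mathcal F}$-free orientation. So $\Csp(H_{\mathcal F})$ is exactly the class of finite graphs admitting an ${\mathcal F}$-free orientation, and both parts reduce to understanding which graphs admit such an orientation.

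For part (1): if ${\mathcal F}$ contains no transitive tournament, then every tournament in ${\mathcal F}$ has a directed cycle; hence, as already observed in \cref{sect:MT}, every finite graph $G$ has an ${\mathcal F}$-free orientation (orient the edges along an arbitrary linear order of $V(G)$, which avoids all directed cycles, hence all of ${\mathcal F}$). Thus $\Csp(H_{\mathcal F})$ contains all loopless graphs. Since $H_{\mathcal F}$ itself is loopless, $\Csp(H_{\mathcal F})$ is contained in the class of loopless graphs; this gives the equality. Conversely, if ${\mathcal F}$ does contain a transitive tournament $T_{n_{\mathcal F}}$, then by \cref{obs:kandn} $k_{\mathcal F}$ is finite, so $K_{k_{\mathcal F}+1}$ has no ${\mathcal F}$-free orientation and $K_{k_{\mathcal F}+1}\notin\Csp(H_{\mathcal F})$, so $\Csp(H_{\mathcal F})$ is not the class of all loopless graphs. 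This proves the contrapositive of the remaining direction.

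For part (2): first suppose the three bulleted conditions hold. Since $\mathcal F$ contains a transitive tournament and there is no $\mathcal F$-free tournament on $n=n_{\mathcal F}$ vertices, the second \cref{obs:kandn}-type observation (the one stating ``if there is no $\mathcal F$-free tournament with $n_{\mathcal F}$ vertices, then the ${\mathcal F}$-free orientation problem is equivalent to finding an $n_{\mathcal F}$-element clique'') gives that a finite graph $G$ admits an ${\mathcal F}$-free orientation if and only if $G$ is $K_n$-free. Hence $\Csp(H_{\mathcal F})$ is exactly the class of $K_n$-free graphs. Conversely, suppose $\Csp(H_{\mathcal F})$ is the class of $K_n$-free graphs. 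Then $K_{n-1}\in\Csp(H_{\mathcal F})$ but $K_n\notin\Csp(H_{\mathcal F})$, i.e.\ $K_{n-1}$ has an ${\mathcal F}$-free orientation and $K_n$ does not; hence $k_{\mathcal F}=n-1$, so in particular $k_{\mathcal F}$ is finite, which by part (1) forces ${\mathcal F}$ to contain a transitive tournament, and moreover forces $n_{\mathcal F}\le n$ (since some $T_{n_{\mathcal F}}$ must be a bound and $K_n$ is not ${\mathcal F}$-free orientable while $T_{n_{\mathcal F}}\subseteq K_{n_{\mathcal F}}$). On the other hand $K_{n-1}=K_{n_{\mathcal F}-1}$ being ${\mathcal F}$-free orientable and $n_{\mathcal F}\ge 2$ together with \cref{obs:kandn} give $k_{\mathcal F}\ge n_{\mathcal F}-1$, i.e.\ $n_{\mathcal F}\le n$; combined with the requirement that no graph on $n$ vertices beyond the clique obstruction is excluded, one checks $n_{\mathcal F}=n$ exactly (if $n_{\mathcal F}<n$ then $K_{n-1}$ would already contain $T_{n_{\mathcal F}}$, contradicting that $K_{n-1}$ is ${\mathcal F}$-free orientable only when there is an ${\mathcal F}$-free tournament on $\ge n_{\mathcal F}$ vertices). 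Finally, since $K_n\notin\Csp(H_{\mathcal F})$ while by the definition of $n_{\mathcal F}=n$ we have $T_{n-1}$ is ${\mathcal F}$-free, the only way $K_n$ fails to have an ${\mathcal F}$-free orientation is that no orientation of $K_n$ is ${\mathcal F}$-free, i.e.\ there is no ${\mathcal F}$-free tournament on $n$ vertices. This establishes all three bulleted conditions.

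The main obstacle I expect is the bookkeeping in the converse direction of part (2): carefully disentangling the interplay between $n_{\mathcal F}$, $k_{\mathcal F}$, and the clique numbers so as to deduce $n_{\mathcal F}=n$ and the nonexistence of an ${\mathcal F}$-free tournament on $n$ vertices from the sole hypothesis that $\Csp(H_{\mathcal F})$ is the class of $K_n$-free graphs. A cleaner route, which I would prefer in the final write-up, is to observe directly that $\Csp(H_{\mathcal F})$ is a monotone (minor-closed under subgraphs) class characterised by its obstructions; if it equals the $K_n$-free graphs, its unique minimal obstruction is $K_n$, and then purely combinatorially $K_{n-1}$ has an ${\mathcal F}$-free orientation while $K_n$ does not, from which the three conditions follow immediately by definition of $n_{\mathcal F}$ and the Erd\H{o}s-type observation used above. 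This avoids any delicate inequality chase.
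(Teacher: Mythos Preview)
Your treatment of part (1) and of the forward direction of part (2) is correct and matches the paper. The gap is in the converse of part (2), specifically in the deduction $n_{\mathcal F}\ge n$.

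From the hypothesis that $\Csp(H_{\mathcal F})$ is the class of $K_n$-free graphs you correctly extract that $K_{n-1}$ has an ${\mathcal F}$-free orientation while $K_n$ does not; this gives $k_{\mathcal F}=n-1$, that there is no ${\mathcal F}$-free tournament on $n$ vertices, that ${\mathcal F}$ contains a transitive tournament, and (via \cref{obs:kandn}) that $n_{\mathcal F}\le k_{\mathcal F}+1=n$. But none of this yields $n_{\mathcal F}\ge n$. Your parenthetical ``if $n_{\mathcal F}<n$ then $K_{n-1}$ would already contain $T_{n_{\mathcal F}}$'' does not work: $K_{n-1}$ is undirected, and the relevant question is whether \emph{every} orientation of it contains $T_{n_{\mathcal F}}$, which is false in general (e.g.\ $\overrightarrow{C_3}$ is a $T_3$-free orientation of $K_3$). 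The ``cleaner route'' you propose has the same defect: knowing only that $K_{n-1}$ is orientable and $K_n$ is not pins down $k_{\mathcal F}$, not $n_{\mathcal F}$.

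What is actually needed is to show that if $n_{\mathcal F}<n$ then some $K_n$-free graph fails to lie in $\Csp(H_{\mathcal F})$, contradicting the hypothesis. The paper does this via a genuine Ramsey-theoretic ingredient (Folkman): for $k=n_{\mathcal F}+1$ there is a $K_k$-free graph $G$ such that every $2$-edge-colouring of $G$ contains a monochromatic $K_{k-1}$. Colouring the edges of an arbitrary orientation $G'$ according to whether they agree with a fixed linear order of $V(G)$, the monochromatic $K_{n_{\mathcal F}}$ forces a copy of $T_{n_{\mathcal F}}$ in $G'$; hence $G$ has no ${\mathcal F}$-free orientation, yet $G$ is $K_{n_{\mathcal F}+1}$-free and therefore $K_n$-free. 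This step is not a bookkeeping matter and cannot be replaced by inequality chasing between $k_{\mathcal F}$ and $n_{\mathcal F}$.
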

\begin{proof}
    If $\mathcal F$ contains no transitive tournament, then \red{every graph $G$} admits an $\mathcal F$-free orientation
    (orient the edges of \red{$G$} according to any linear ordering of \red{$V(G)$}), and so it follows that
    \red{$\Csp(H_\mathcal F)$} is the \red{class of all loopless graphs}. 
    If $\mathcal F$ contains a transitive tournament, then it follows from
    \cite{erdosMTA9} that there is a complete graph that does not admit an $\mathcal F$-free orientation, and
    thus \red{$\Csp(H_\mathcal F)$ does not contain all loopless graphs.}
    
     With similar arguments as in the \red{first itemized statement}, one can notice that if $\mathcal F$ contains a transitive
    tournament and there is no $\mathcal F$-free \red{tournament on $n_\mathcal F$ vertices}, then \red{$\Csp(H_\mathcal F)$ is the 
    class of $K_n$-free graphs for $n = n_\mathcal F$}.
    \red{Conversely, if $\Csp(H_\mathcal F)$ is the class of $K_n$-free graphs, then there is no $\mathcal F$-free 
    tournament on $n$ vertices. In particular, $\mathcal F$ contains a transitive tournament on at most $n$ vertices, i.e., 
    $n_\mathcal F \le n$. All that is left to prove is that $n_\mathcal F \ge n$. Proceeding by contradiction, suppose that 
    $n_\mathcal F  < n$.} It suffices to show that there is a graph $G$ with no complete subgraph
    on $n_\mathcal F +1$ vertices such that $G$ does not admit an $\mathcal F$-free orientation, i.e., \red{$\Csp(H_\mathcal F)$
    does not contain all $K_n$-free graphs (a contradiction).}
    It is well-known that for every positive integer $k\ge 3$, there is a $K_k$-free graph $G$ such that
    every $2$-edge-colouring of $G$ yields a monochromatic complete graph on $k-1$ vertices (see,  e.g.,~\cite{folkmanSIAM18}). 
    Let $G$ be such a graph for $k = n_\mathcal F +1$. Consider any orientation $G'$ of $G$, and
    any linear ordering $\le$ of $V(G)$. Now, colour an edge $xy$ of $G$ with blue if $x\le y$
    and $(x,y)\in E(G')$ or if $y \le x$ and $(y,x)\in E(G')$; otherwise, colour $xy$ with red. 
    By the choice of $G$, there must \red{be} $k$ vertices $v_1,\dots, v_k$ that induce a monochromatic
    clique. It is not hard to notice that $v_1,\dots, v_k$ must induce a transitive tournament on
    $G'$, so $G'$ contains a transitive tournament on \red{$n_\mathcal F$} vertices. Since this holds
    for any orientation $G'$ of $G$, we conclude that $G$ does not admit an $\mathcal F$-free orientation.
    Thus, $G$ is \red{a $K_n$-free graph that does not belong to $\Csp(H_\mathcal F)$.}
    This concludes the proof. 
\end{proof}

Finally, it will also be convenient to have an alternative description
of $\Aut(H_\mathcal{F})$ if it contains the action $\sw$. Let $P \subseteq V^3$
be the ternary relation that contains all triples $(i,j,k) \in V^3$ such that 
$U(i,j)$, $U(j,k)$, $U(i,k)$, and $|\{(i,j),(j,k),(i,k)\} \cap E| \text{ is even}$. 

\begin{theorem}\label{thm:P} 
    For a finite set of finite tournaments $\mathcal{F}$ the following equalities hold.
    \begin{enumerate}
        \item $ \langle \Aut(D_{\mathcal F}) \cup \{\sw\} \rangle = \Aut(V;P)$, and
        \item $\langle \Aut(D_{\mathcal F}) \cup \{-,\sw\} \rangle = \langle \Aut(V;P)
    \cup \{-\} \rangle$.
    \end{enumerate}
\end{theorem}
\begin{proof}
First observe that $P$ is preserved by $\sw$; this implies the inclusions $\subseteq$ in the two statements. 
Now suppose for contradiction that there exists $\alpha \in \Aut(V;P) \setminus \langle \Aut(D_{\mathcal F}) \cup \{\sw\} \rangle$. 
Then Theorem~\ref{thm:AK} implies that $\Aut(H_{\mathcal F})$ contains $\mi$, which is a contradiction because $\mi$ clearly does not preserve $P$. 
\end{proof} 


\red{An $\omega$-categorical structure is \emph{model complete} if its automorphism group is dense in the self-embeddings; equivalently, if in the structure every first-order formula is equivalent to an existential formula (see, e.g.,~\cite{Book}).}  

\begin{theorem}\label{thm:mc-core}
For all finite sets ${\mathcal F}$ of finite tournaments, $H_{\mathcal F}$ is a model-complete core. 
\end{theorem}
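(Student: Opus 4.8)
The plan is to show that $H_{\mathcal F}$ is a core and that it is model-complete, treating each case of Theorem~\ref{thm:AK} separately. For the core property: if $H_{\mathcal F}$ is the Rado graph or a Henson graph, then it is well known to be a model-complete core, so we may assume $H_{\mathcal F}$ is not homogeneous. In particular $\mathcal F$ contains a transitive tournament, so $n_{\mathcal F} \ge 3$ (the case $n_{\mathcal F}=2$ makes $H_{\mathcal F}$ edgeless, which is homogeneous), and Lemma~\ref{lem:HF-core} already gives that $H_{\mathcal F}$ is a core. So the real content is model-completeness, and by the definition recalled just above the theorem it suffices to show that $\Aut(H_{\mathcal F})$ is dense in the monoid of self-embeddings of $H_{\mathcal F}$; equivalently, every self-embedding of $H_{\mathcal F}$ agrees on every finite tuple with some automorphism.

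First I would record the easy case: if $\Aut(H_{\mathcal F}) = \Aut(D_{\mathcal F})$, then an embedding $e\colon H_{\mathcal F}\to H_{\mathcal F}$ need not a priori preserve the orientation relation $E$ of $D_{\mathcal F}$, so the plan is to first reduce to that situation. Concretely, for a self-embedding $e$ of $H_{\mathcal F}$ and a finite tuple $a$, we want an automorphism of $H_{\mathcal F}$ agreeing with $e$ on $a$. Since $D_{\mathcal F}$ is homogeneous and finitely bounded, the image $e(a)$ together with the $E$-structure it inherits from $D_{\mathcal F}$ is an $\mathcal F$-free oriented graph on the same underlying graph as the substructure of $D_{\mathcal F}$ induced on $a$; by homogeneity of $D_{\mathcal F}$ there is a self-embedding of $D_{\mathcal F}$ realizing the map $a \mapsto e(a)$ provided the two induced $\{E,U\}$-structures are isomorphic — which they are, since both are $\mathcal F$-free orientations of the same finite graph and, crucially, $\Aut(H_{\mathcal F})$ acts on orientations exactly via the generators $\mi$ and $\sw$ listed in Theorem~\ref{thm:AK}. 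So the heart of the argument is: given two $\mathcal F$-free orientations $O_1, O_2$ of the same finite graph $G$ that arise as induced substructures of $D_{\mathcal F}$, there is an element of the group generated by $\Aut(D_{\mathcal F})$, $\mi$, $\sw$ taking one to the other locally. This is essentially built into Theorem~\ref{thm:AK}: whichever of the four groups equals $\Aut(H_{\mathcal F})$, it contains, for every finite tuple, an automorphism of $H_{\mathcal F}$ mapping $a$ to $e(a)$, simply because $e$ restricted to $a$ is a partial isomorphism of $H_{\mathcal F}$ and $H_{\mathcal F}$, being the underlying graph of a homogeneous structure together with the (interpretable) relation $P$, is homogeneous as an $\{U,P\}$-structure — hence every partial isomorphism of $H_{\mathcal F}$ extends to an automorphism.

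So I would actually organize the proof around Theorem~\ref{thm:P}: in the non-homogeneous case $\Aut(H_{\mathcal F})$ is either $\Aut(V;P)$ or $\langle \Aut(V;P)\cup\{\mi\}\rangle$ (the cases without $\sw$ collapse to $\mi$ alone or trivial, which I would handle by a direct Fraïssé-style extension argument, or note they do not actually occur when $H_{\mathcal F}$ is non-homogeneous — this needs checking against Agarwal–Kompatscher). In the case $\Aut(H_{\mathcal F}) = \Aut(V;P)$: the structure $(V;U,P)$ is $\omega$-categorical and, I claim, homogeneous — its age is finitely bounded (bounds coming from $\mathcal F$ via the tournament coding, plus the consistency condition that $P$ be the parity relation on triples) and it has amalgamation inherited from $D_{\mathcal F}$. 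Homogeneity of $(V;U,P)$ immediately yields that every self-embedding is locally an automorphism, i.e.\ model-completeness. When $\mi$ is also present, the same works with $(V;U,P)$ replaced by the structure whose automorphism group is $\langle \Aut(V;P)\cup\{\mi\}\rangle$; one checks this is again the automorphism group of a homogeneous finitely bounded structure (intuitively, $P$ together with a relation recording "same $\mi$-orbit pattern").

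The main obstacle I anticipate is verifying that the relevant expansions — $(V;U,P)$ and its $\mi$-variant — are genuinely homogeneous, i.e.\ that their ages have the amalgamation property. Amalgamation for $D_{\mathcal F}$ is automatic from Theorem~\ref{thm:Fraisse} (the class of $\mathcal F$-free oriented graphs is closed under unions), but after passing to the reduct $H_{\mathcal F}$ and re-expanding by $P$ one must argue that $P$ carries exactly the information needed: a finite $\{U,P\}$-structure in the age corresponds to a finite graph together with a choice, per triangle, of parity class, and one must show any such consistent choice is realized by some $\mathcal F$-free orientation and that two such can be amalgamated. This is where Theorem~\ref{thm:AK} does the real work — it is precisely the statement that $\Aut(H_{\mathcal F})$ is no larger than these groups, which bounds which partial maps are partial automorphisms — so I would lean on it rather than re-proving the amalgamation by hand. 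Once homogeneity of the appropriate expansion is in place, model-completeness is immediate, and combined with Lemma~\ref{lem:HF-core} (plus the easy Rado/Henson cases) this gives the theorem.
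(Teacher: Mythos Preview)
Your reduction for the core property is fine and matches the paper (Lemma~\ref{lem:HF-core} plus the homogeneous Rado/Henson cases). The gap is in the model-completeness part: you are trying to derive it from the \emph{statement} of Theorem~\ref{thm:AK}, but that theorem classifies $\Aut(H_{\mathcal F})$ only, and says nothing about self-embeddings. Concretely, your key step is to note that a suitable expansion (say $(V;U,P)$ in case~3) is homogeneous, and conclude that every self-embedding of $H_{\mathcal F}$ is locally an automorphism. But a self-embedding $e$ of $H_{\mathcal F}=(V;U)$ only preserves $U$ and $\neg U$; there is no a priori reason it preserves $P$, so $e$ restricted to a finite tuple need not be a partial isomorphism of $(V;U,P)$, and homogeneity of that expansion gives you nothing. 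Indeed, ``$e$ preserves $P$'' is essentially equivalent to what you are trying to prove, so the argument is circular. The same circularity appears in cases~1 and~2, which you correctly flag as unhandled: knowing $\Aut(H_{\mathcal F})=\Aut(D_{\mathcal F})$ does not by itself force a $U$-embedding to preserve $E$.

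The paper does not attempt a direct argument. It observes that the Agarwal--Kompatscher proof of Theorem~\ref{thm:AK} proceeds via \emph{canonical functions}, a technique that analyses arbitrary self-maps (not just permutations) in the local closure, and that this analysis already shows every self-embedding of $H_{\mathcal F}$ lies in the closure of $\Aut(H_{\mathcal F})$; model-completeness is thus a by-product of the \emph{proof} of Theorem~\ref{thm:AK}, not of its statement. If you want a self-contained argument you would need to reproduce that canonical-function analysis for embeddings, or otherwise show directly that every self-embedding of $H_{\mathcal F}$ preserves the relevant orbit-defining relations ($E$, or $P$, depending on the case)---which is exactly the missing step in your sketch.
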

\begin{proof}
    The fact that $H_\mathcal{F}$ is core follows from \cref{lem:HF-core}, and the
    model completeness of $H_\mathcal{F}$ is a side product of the proof of \red{Theorem 2.2} 
    in~\cite{AgarwalKompatscher} (\red{via \emph{canonical functions}; also see~\cite{RandomMinOps} for another use of this technique where the extra consequences for model completeness are made explicit}). 
\end{proof}

\section{The Orientation Problem}
\label{sect:dicho-o}
We prove the complexity dichotomy for the $\mathcal F$-free orientation problem following
a similar idea as we proved the dichotomy for the $\mathcal F$-free orientation completion problem:
we show that for each finite set of finite tournaments $\mathcal F$, there is a Boolean structure
whose CSP is polynomial-time equivalent to the $\mathcal F$-free orientation problem. We will
do so by a case distinction over the automorphism group of $H_\mathcal{F}$, and repeatedly use the
following lemma (see~\cite{Book}).

\begin{lemma}\label{lem:mc-core-orbits}
If $\mathfrak C$ is an $\omega$-categorical model-complete core, then all orbits of $k$-tuples of $\Aut(\mathfrak C)$ are primitively positively definable in ${\mathfrak C}$. 
\end{lemma}

Throughout the remaining of this section, let ${\mathcal F}$ be a fixed finite set of finite tournaments.

\subsection{The Standard Case}

 We begin by considering the case where $\Aut(H_{\mathcal F}) = \Aut(D_{\mathcal F})$.   
     
    \begin{proposition}\label{prop:standard-def-s}
    Let $\mathcal F$ be a finite set of finite tournaments with  $n_{\mathcal F} \geq 3$. If $\Aut(H_{\mathcal F}) = \Aut(D_{\mathcal F})$, then the relation $E$ has a primitive positive definition in $H_{\mathcal F}$. 
    \end{proposition}
    \begin{proof}     
    Note that $E$ consists of one orbit of pairs in $\Aut(H_{\mathcal F})$. 
    Since $H_{\mathcal F}$ is a model-complete core by \cref{thm:mc-core}, it follows that the relation $E$ has a primitive positive definition in $H_{\mathcal F}$ by Lemma~\ref{lem:mc-core-orbits}. 
    \end{proof}

\begin{corollary}\label{cor:standard-pp-interpret}
    Let $\mathcal F$ be a finite set of finite tournaments with $n_{\mathcal F} \geq 3$. If $\Aut(H_{\mathcal F}) = \Aut(D_{\mathcal F})$, then 
    there exists a primitive positive interpretation of $\bB_{\mathcal F}$ in $H_{\mathcal F}$, and  there exists a polynomial-time reduction 
    from $\Csp(\bB_{\mathcal F})$ to the ${\mathcal F}$-free orientation problem. 
\end{corollary}
\begin{proof}
   Since $E$ has a primitive positive definition in $H_{\mathcal F}$ by~\cref{prop:standard-def-s}, the statement is an immediate consequence of~\cref{lem:interpret-B}. The polynomial-time reduction follows from~\cref{lem:pp-interpret-reduce}. 
\end{proof}


\subsection{The Flipping Case}

We write $N$ for the binary relation on $V$ defined as 
\[
N := \{(x,y) \in V^2 \mid \neg U(x,y) \wedge x \neq y\}.
\]
Let $O$ be the arity four relation on $V$  defined as
    \[
    O := \{(x,y,u,v) \in \red{V^4} \mid S_4(x,y,u,v) \wedge N(x,u) \wedge N(x,v) \wedge N(y,u) \wedge N(y,v) \}
    \]
    where $S_4$ is the relation defined in~\cref{eq:S4}. 
    
    \begin{lemma}\label{lem:flip-def-s}
        Let $\mathcal F$ be a finite set of finite tournaments with  $n_{\mathcal F} \geq 3$. If
        $\Aut(H_{\mathcal F}) = \langle \Aut(D_{\mathcal F}) \cup \{-\} \rangle$, then the relation $S_4$ 
        has a primitive positive definition in $H_{\mathcal F}$. 
    \end{lemma}
    \begin{proof}
     Note that the relation $O$ consists of one orbit of pairs in $\Aut(H_{\mathcal F})$. 
    Since $H_{\mathcal F}$ is a model-complete core by \cref{thm:mc-core}, the relation $O$ has a primitive positive definition in $H_{\mathcal F}$  by Lemma~\ref{lem:mc-core-orbits}.  
    We claim that $S_4$ has the following primitive positive definition.
    \begin{align*}
	\exists a,b \big (O(x,y,a,b) \wedge O(a,b,u,v) \big )
    \end{align*}
   	If $(x,y,u,v)$ satisfies this formula and $a,b$ are witnesses for the existentially quantified variables, then 
	 $(x,y) \in E$ if and only if
	 $(a,b) \in E$, which in turn is the case if and only if $(u,v) \in E$. Hence, the given formula implies $S_4(x,y,u,v)$. Conversely, if $(x,y) \in E$ and $(u,v) \in E$, then \red{by the homogeneity of $D_{\mathcal F}$} we may pick 
	$(a,b) \in E$ such that
	$(p,q) \in N$ for all $p \in \{x,y,u,v\}$ and $q \in \{a,b\}$, and hence $(x,y,u,v)$ satisfies the given formula.   
    \end{proof}

\begin{corollary}\label{cor:flip-reduction}
    Let $\mathcal F$ be a finite set of finite tournaments with $n_{\mathcal F} \geq 3$.
    If $\Aut(H_{\mathcal F}) = \langle \Aut(D_{\mathcal F}) \cup \{-\} \rangle$, then $\bB_{\mathcal F}$
    has a primitive positive interpretation in $H_{\mathcal F}$,
    and   there exists a polynomial-time reduction from 
    $\Csp(\bB_{\mathcal F})$ to 
    the ${\mathcal F}$-free orientation problem. 
\end{corollary}
\begin{proof}
   The first statement follows directly from~\cref{lem:flip-def-s} via~\cref{lem:interpret-B}. 
    The second statement then follows via~\cref{lem:pp-interpret-reduce}. 
\end{proof}

\subsection{The Switching Case}
So far, we have solely worked with the Boolean structure $\bB_\mathcal F$.
Now, when $\Aut(H_{\mathcal F})$ contains $\sw$, 
we consider an auxiliary 
Boolean structure which we define in the following paragraphs.
If $T$ is a tournament with vertex set $\{1,\dots,n\}$, then $c_T
\in \{0,1\}^{k \choose 3}$
is defined as follows. The entries of $c_T$ will be indexed by 3-element subsets $\{i,j,k\}$
of $\{1,\dots,n\}$, written as $(c_T)_{ijk}$.  For all $\{i,j,k\} \in {V(T) \choose 3}$ with
$i < j < k$ we have that $(c_T)_{ijk} = 0$ if and only if $|\{(i,j),(j,k),(i,k)\} \cap E(T)|$
is even. Equivalently, $c_T$ is defined by the following equation over $\mathbb Z_2$
\begin{align}
(c_T)_{ijk} = (b_T)_{ij} + (b_T)_{ik} + (b_T)_{jk}.
\label{eq:cT}
\end{align}


Recall that the tuples $b_T$ fully \red{determine} the labeled tournament $T$.  Note that there can be different
tournaments $T$ and $T'$ such that $c_T = c_{T'}$. Nonetheless, we argue that the tuples $c_T$ determine
whether $T$ is $\mathcal F$-free whenever $\mathcal F$ is preserved by switch. 

\begin{observation}\label{obs:c_T}
Let $\mathcal F$ be a finite set of tournaments preserved by switch, and $T, T'$ a pair of tournaments.
If $c_T = c_{T'}$, then $T$ is ${\mathcal F}$-free if and only if $T'$ is ${\mathcal F}$-free.
\end{observation}
\begin{proof}
This is a consequence of~\cref{lem:switch,thm:P}.
\end{proof}


\begin{definition}
The structure $\bC_{\mathcal F}$ has domain $\{0,1\}$ and the signature which contains for every $k \leq m_{\mathcal F}$ the 
relation symbol $Q_k$ of arity ${k \choose 3}$ which denotes in $\bC_{\mathcal F}$ the relation consisting of all $c \in  \{0,1\}^{k \choose 3}$ such that
there exists an $\mathcal F$-free tournament $T$ with $c_T = c$.
\end{definition}

We proceed to observe that \red{if $\sw \in \Aut(H_{\mathcal F})$, then} $\bB_\mathcal F$ and $\bC_\mathcal F$ are mutually
primitively positively definable. To do so, we will use the following lemma which is similar to~\cref{lem:minority-BF}.

\begin{lemma}\label{lem:minority-sw}
    Let $\mathcal F$ be a finite set of finite tournaments with 
    $\sw \in \Aut(H_{\mathcal F})$. If
    $\bC_{\mathcal F}$ does not interpret $K_3$ primitively positively,
    then $\bC_{\mathcal F}$ is preserved by the Boolean minority operation or a constant operation. 
\end{lemma}
\begin{proof}
If $\bC_{\mathcal F}$ is not NP-hard, it falls into one of the Schaefer's cases \red{(Theorem~\ref{thm:schaefer})}:
$\bC_\mathcal F$ is preserved by a constant operation, by $\min$, by $\max$, by $\majority$, or by $\minority$. If the first
or last case holds, then there is nothing to be shown. 
We show that if $\bC_\mathcal F$ is preserved by $\min$, by $\max$, or by $\majority$, then 
$\bC_{\mathcal F}$ is also preserved by the constant $0$ operation. 
It suffices to show that for every $n \leq m_{\mathcal F}$, the relation $Q_n$ is either empty or contains
the tuple with all entries $0$. For each $t \in Q_n$, define a tournament $T(t)$ on $\{1,\dots,m\}$ by setting
$(i,j) \in E(T(t))$ if $(t_i,t_j,t_{m+1}) \in P$, and setting $(j,i) \in E(T(t))$ if $(t_i,t_j,t_{m+1}) \not\in P$.
Denote by $\mathcal T$ the set of all tournaments on $\{1,\dots,m\}$ obtained in this way, and notice that
$P_{n-1}(\bB_\mathcal T)$ is preserved by min, max, or majority, because $Q_n$ is preserved by min, max, or majority. 
Thus, by \cref{lem:collapse}, either $P_{n-1}(\bB_\mathcal T) = \varnothing$ or $P_{n-1}(\bB_\mathcal T) = \{0,1\}^{n -1 \choose 2}$.
If the former case holds, then $Q_n$ is empty and the claim follows; if the latter holds, then
$P_{n-1}(\bB_\mathcal T)$ contains both constant tuples, i.e., $T_{n-1}\in \mathcal T$. 
Let $t \in Q_n$ be such that $T(t) = T_{n-1}$, i.e., $t_{ijn} = 0$ for all $1\le i < j \le n-1$. Since
$t = c_T$ for some $\mathcal F$-free tournament $T$, we have that $t_{ijk} = t_{ijn}  + t_{ikn} + t_{jkn} = 0$
for all $1\le i \le j \le k \le n-1$. Hence, $t$ is the constant $0$-tuple, and $t\in Q_n$. The claim now follows.
\end{proof}

Let $R_4 \subseteq \{0,1\}^4$
be the $4$-ary Boolean relation consisting of all the  tuples  $(i,j,k,l)$ such that $i + j + k + l = 0 \mod 2$.

\begin{lemma}\label{lem:BF-CF-T4}
Let $\mathcal F$ be a finite set of finite tournaments. If $\mathcal F$ is preserved by switch, 
then each pair of the following structures are mutually primitively positively definable
\[
    \bB_\mathcal F,~(\bB_\mathcal F,R_4),~(\bC_\mathcal F, R_4),~\bC_\mathcal F.
\]
\end{lemma}

\begin{proof}
We first see that $\bB_\mathcal F$ and $(\bB_\mathcal F, R_4)$ are mutually
primitively positively definable, and we only argue the non-trivial direction. By~\cref{thm:ppdef-pol},
it suffices to show that $R_4$ is preserved by all polymorphisms of $\bB_\mathcal F$. 
By \cref{lem:minority-BF}, if $\bB_\mathcal F$ has a polymorphism $f$ which is not a projection, then
it is generated by the minority, or by constant operations, i.e., $f$ is obtained by composing
projections with the minority or the constant operation (see e.g.,~\cite{post}).
Since $R_4$ is preserved by projections, by the minority, and by the constant operation, 
it follows inductively that $R_4$ is preserved by compositions of these operations. 
Therefore, $R_4$ is preserved by all polymorphisms of $\bB_\mathcal F$ and thus, it has a primitive positive definition in
$\bB_\mathcal F$. 
With similar arguments, and using \cref{lem:minority-sw}, one can prove that
$\bC_\mathcal F$ and $(\bC_\mathcal F,  R_4)$ are mutually primitively positively definable. 
To conclude the proof we show that $(\bB_\mathcal F,R_4)$ and
$(\bC_\mathcal F, R_4)$ primitively positively define one another.
It follows immediately from \cref{eq:cT}
that $Q_n^{\bB_\mathcal F}$ is
primitively positively defined in $(\bB_\mathcal F, R_4)$ by
\[
    \exists b_{12},\dots, b_{n-1,n} \left (P_n(b_{12},\dots, b_{n-1,n})
    \land\bigwedge_{1\le i < j < k\le n}R_4(b_{ij},b_{il},b_{jk}, c_{ijk}) \right).
\]
Finally, consider the ${k \choose 2}$-ary relation $P'_n(b_{12},\dots, b_{n-1,n})$ 
defined by the formula
\[
    \exists c_{123},\dots, c_{n-2,n-1,n} \left (Q_n(c_{123},\dots, c_{n-2,n-1,n})
    \land \bigwedge_{1\le i < j < k\le n}R_4(b_{ij},b_{ik},b_{jk}, c_{ijk}) \right).
\]
Notice that, for a tournament $T$ the formula $b_T \in P'_n$ 
if and only if there is an $\mathcal F$-free tournament $T'$ such 
that $c_T = c_{T'}$. Since $\sw\in \Aut(H_\mathcal F)$, by
\cref{obs:c_T} we conclude that $T$ is also $\mathcal F$-free, and
so $b_T\in P_n^{\bB_\mathcal F}$. Conversely, 
if $b_T\in P_n^{\bB_\mathcal F}$, then the tuple
$c_T$ is a witness to the fact that $b_T \in P'_n$.
\end{proof}

The following statement is an immediate implication of this lemma, \cref{lem:pp-interpret-reduce}, and \cref{thm:reduction-to-boolean}.

\begin{proposition}\label{prop:F-to-CF}
    Let $\mathcal F$ be a finite set of finite tournaments. If $\sw\in \Aut(H_\mathcal F)$, 
    then there is a  polynomial-time reduction from the $\mathcal F$-free orientation problem to 
    $\Csp(\bC_\mathcal F)$.
\end{proposition}

The proof of the following theorem is similar to the proof of Lemma~\ref{lem:pp-definition-S}. 
We use the ternary relation $P$ introduced before \cref{thm:P} to now define
\begin{align*}
    S_6 := \big \{(a,b,c,u,v,w) \in V^6 \mid \; & U(a,b) \wedge U(b,c) \wedge U(a,c) \wedge U(u,v) \wedge U(v,w) \wedge U(u,w) \\
    & \text{ and } P(a,b,c) \Leftrightarrow P(u,v,w) \big \} .
\end{align*}

\begin{lemma}\label{lem:S6-interpret}
For every finite set of finite tournaments $\mathcal F$, the Boolean structure
${\mathfrak C}_{\mathcal F}$ has a primitive positive interpretation in $(H_{\mathcal F}, S_6)$. 
\end{lemma}
\begin{proof}
Our interpretation $I$ has dimension three, is defined on all triples $(a,b,c)$ that induce a $K_3$ in $H_{\mathcal F}$, and is given by $I(a,b,c) := 0$ if $(a,b,c) \in P$, and $I(a,b,c) := 1$ otherwise. The domain formula $\top_I(a,b,c)$ is $U(a,b) \wedge U(b,c) \wedge U(a,c)$. The interpreting formula ${=_I}(a,b,c,u,v,w)$ for equality is $S_6(a,b,c,u,v,w)$. For $n \leq k_{\mathcal F}$, the interpreting formula $$(R_n)_I(a_{123},b_{123},c_{123},\dots,a_{n-2,n-1,n},b_{n-2,n-1,n},c_{n-2,n-1,n})$$ 
for $R_n$ is 
$$\exists x_1,\dots,x_n \bigwedge_{\{i,j\} \in {\red{[n]} \choose 2}} U(x_i,x_j) \wedge \bigwedge_{ \{i,j,k\} \in {\red{[n]} \choose 3} } S_6(a_{ijk},b_{ijk},c_{ijk},x_i,x_j,x_k).$$
It is straightforward to verify that $S_6(a,b,c,u,v,w)$ if and only if $I(a,b,c) = I(u,v,w)$. 
Now suppose that $(H_{\mathcal F},S_6) \models (R_n)_I(a_{123},\dots,c_{n-2,n-1,n})$, and let $d_1,\dots,d_n$ be witnesses for the existentially quantified variables in this formula. 
Note that $d_1,\dots,d_n$ must induce a $K_n$ in $H_{\mathcal F}$; let $T$ be the tournament induced in $D_{\mathcal F}$. Then $c_T \in R_n$ by the definition of $R_n$, which means that 
$$(I(a_{123},b_{123},c_{123}),\dots,I(a_{n-2,n-1,n},b_{n-2,n-1,n},c_{n-2,n-1,n})) \in R_n$$ by the properties of $S_6$. All of the implications in this argument can be \red{reversed}, which completes the proof. 
\end{proof}

To conclude the proof, it now suffices to show that $S_6$ has a primitive positive definition
in $H_\mathcal F$ when $\sw\in Aut(H_\mathcal F)$. Again, we consider the cases $-\in \Aut(H_\mathcal F)$
and $-\not\in\Aut(H_\mathcal F)$ separately. 

\begin{lemma}\label{lem:sw-fl}
    Let $\mathcal F$ be a finite set of finite tournaments. If $\Aut(H_{\mathcal F}) =
    \langle \Aut(D_{\mathcal F}) \cup \{-,\sw\} \rangle$,  then there exists a primitive positive definition of $S_6$ in $H_{\mathcal F}$.  
\end{lemma}

\begin{proof}
Notice that if $n_\mathcal F =3$, then $T_3\in \mathcal F$, and since $\overrightarrow{C_3}$
can be obtained as a switch of $T_3$, the triangle does not admit an $\mathcal F$-free
orientation. The latter claims holds trivially if $n_\mathcal F \le 2$. So, if $n_\mathcal F\le 3$,
then $S_6 = \varnothing$, and thus it is primitive positive defined by  formula $\bot$. Otherwise, if $n_\mathcal F\ge 4$,
we proceed similarly to~\cref{lem:flip-def-s}, but instead of $O$ using the relation 
    \begin{align*}
    (P(x_1,x_2,x_3) \Leftrightarrow (P(y_1,y_2,y_3)) & \wedge \bigwedge_{i,j\in \{1,2,3\}} N(x_i,y_j) \\
    & \wedge U(x_1,x_2) \wedge U(x_2,x_3) \wedge U(x_1,x_3) \\
    & \wedge U(y_1,y_2) \wedge U(y_2,y_3) \wedge U(y_1,y_3) 
    \end{align*} which consists of just one orbit of 6-tuples in 
    the group $\Aut(H_{\mathcal F})$, and hence has a primitive positive definition in the model-complete core structure $H_{\mathcal F}$. 
\end{proof}

Finally, we consider the most technical case, namely, when $\Aut(H_{\mathcal F}) = \langle \Aut(D_{\mathcal F}) \cup \{\sw\} \rangle$.

\begin{lemma}\label{thm:sw} 
    Let $\mathcal F$ be a finite set of finite tournaments.
    If $\Aut(H_{\mathcal F}) = \langle \Aut(D_{\mathcal F}) \cup
    \{\sw\} \rangle$, then $S_6$ has a  primitive positive definition 
    in $H_{\mathcal F}$. 
\end{lemma}
\begin{proof}
 With the same arguments as in the proof of \cref{lem:sw-fl}, we see that
if $n_\mathcal F\le 3$, then $S_6 $ is primitive positive defined by  formula
$\bot$. Suppose that  $n_\mathcal F\ge 4$, and notice that in this case, any orientation
of $K_3$ is $\mathcal F$-free, \red{a} fact which we use below. 
Now, since $\Aut(H_{\mathcal F}) = \langle \Aut(D_{\mathcal F}) \cup\{\sw\} \rangle$, it follows from Theorem~\ref{thm:P}
that the relation $P$ consists of one orbit of triples of $\Aut(H_{\mathcal F})$. Hence, $P$ has
a primitive positive definition in $H_{\mathcal F}$, because $H_{\mathcal F}$ is a model-complete core and by
Lemma~\ref{lem:mc-core-orbits}. 
The same holds for 
\begin{align*}
    Q & := \big \{(a,b,c) \in V^3 \mid U(a,b) \wedge U(b,c) \wedge U(a,c) \wedge \neg P(a,b,c) 
    \big \} 
\end{align*}
instead of $P$. 
Therefore, it suffices to prove that  $S_6$ 
is primitive positive definable in $(V;P,Q)$. 

Let $T \in {\mathcal F}$, 
and $\phi$ be a conjunction 
of all atomic $\{P,Q\}$-formulas 
of the form $P(i,j,k)$ for $\{i,j,k\} \in {\{1,\dots,k\} \choose 3}$ such that $(c_T)_{ijk} = 0$,
and of the form $Q(i,j,k)$ such that $(c_T)_{ijk} = 1$.  
Clearly, $\phi$ is unsatisfiable in $(V;P,Q)$. 
Let $\psi$ be a maximal satisfiable subset of the conjuncts of $\phi$.
Since $T_3$ and $\overrightarrow{C_3}$ are $\mathcal F$-free, each conjunct of $\phi$ is satisfiable and so 
$\psi$ has at least one conjunct $\chi_1$. Let $\psi'$ be the remaining conjuncts of $\psi$, i.e., $\psi' := \psi \setminus \{\chi_1\}$. 
Let $\chi_2$ be any conjunct of $\phi$ which is not in $\psi$. Note that $\psi'$  implies $\chi_1 \Rightarrow \neg \chi_2$. We may assume that $\chi_1$ is of the form $P(i,j,k)$ and $\chi_2$ is of the form $Q(i,j,k)$. 
To see that this is without loss of generality, first suppose that 
otherwise all conjuncts in $\phi\setminus \psi$ are of the form $P(i,j,k)$. 
Clearly, we may assume that 
two of the variables $a,b \in \{1,\dots,k\}$ 
  of $\chi_2$
  are smallest among all the variables of $\psi$. 
Let $\tilde T$ be an isomorphic copy obtained from $T$ by exchanging the labels $a$ and $b$; then the construction of $\phi$ above, carried out with $\tilde T$ instead of $T$, has the desired property. The case that all conjuncts of $\psi$ are of the form $Q(i,j,k)$ can be treated similarly. 

Note that $\chi_1$ and of $\chi_2$ cannot have the same set of variables;
so we may assume without loss of generality that $\chi_1 \Rightarrow \chi_2$ is of the form
$P(a,b,c) \Rightarrow P(a',b',c')$ where 
$a$ might be the same variable as $a'$, and
$b$ might be the same variable as $b'$, but $c$ and $c'$ are different variables. 
Let $\psi''$ be the primitive positive formula obtained from $\psi'$ by existentially quantifying all variables except for $a,b,c,a',b',c'$, and let $\eta$
be the formula
\begin{align*}
    \exists a',b',c',u',v',w' \big ( \psi''(a,b,c,a',b',c') \wedge \psi''(b',c',a',v',w',u') \wedge \psi''(w',u',v',w,u,v)  \big ). 
\end{align*}
First note that if $(a,b,c) \in P$,
then the first conjunct of $\eta$ implies that $(a',b',c') \in P$, the second that
$(u',v',w') \in P$, and the third that 
$(u,v,w) \in P$.
 Conversely, 
$\eta(a,b,c,u,v,w)$ implies that if $(u,v,w) \in P$, then $(a,b,c) \in P$. 

We claim that the following formula $\nu(a,b,c,u,v,w)$ defines $S_6$: 
\begin{align*}
    \exists a',b',c',u',v',w' \big ( \eta(a,b,c,a',b',c') \wedge \eta(a',b',c',u',v',w') \wedge \eta(u',v',w',u,v,w)  \big ). 
\end{align*}
It is easy to see that $\nu(a,b,c,u,v,w)$ implies
$S_6(a,b,c,u,v,w)$. 

Now suppose that $(a,b,c,u,v,w) \in S_6$. 
First consider the case that $(a,b,c) \in P$ and $(u,v,w) \in P$. The case that $(a,b,c) \in Q$ and
$(u,v,w) \in Q$ can be treated similarly. 
Let $G$ be the canonical database of the primitive positive $\{E\}$-formula obtained from
replacing the $\{P,Q\}$-atoms of $\nu(a,b,c,u,v,w)$ by their primitive positive definition in $H_{\mathcal F}$.
Let $D$ be an ${\mathcal F}$-free orientation of $G$. 
We assume that the tournament $\{a,b,c\}$ induces the same tournament in $D_{\mathcal F}$ and in $D$, and that 
$\{u,v,w\}$ induces the same tournament in $D_{\mathcal F}$ and in $D$.
This is without loss of generality, because 
otherwise we may repeatedly compose $e$ with $\sw_a$ for $a \in \{a,b,c,u,v,w\}$ to obtain such an orientation. 
 
We now add directed edges to $D$ such that
$\{a,b,c,u,v,w\}$ induces the same digraph in $D_{\mathcal F}$ and in $D$. Note that the resulting digraph is still ${\mathcal F}$-free, 
because every tournament that embeds into $D$ must either embed into $\{a,b,c,u,v,w\}$, into
$\{a,b,c,a',b',c'\}$, into $\{a',b',c',u',v',w'\}$, or into $\{u',v',w',u,v,w\}$, because all edges of $D$ are  covered by one of these subsets. 
Hence, $D$ has an embedding $e$ into $D_{\mathcal F}$, and by homogeneity we may assume that $e(a) = a$, $e(b) = b$, and $e(c) = c$, $e(u) = u$, $e(v) = v$, and $e(w) = w$. This shows that $\nu(a,b,c,u,v,w)$ holds in $H_{\mathcal F}$. 
\end{proof}

\begin{corollary}\label{cor:sw-pp-interpret}
    Let $\mathcal F$ be a finite set of finite tournaments. 
    If $\Aut(H_\mathcal F)\in [\langle \Aut(D_{\mathcal F}) \cup \{\sw\} \rangle,$
    $\langle \Aut(D_{\mathcal F}) \cup \{-,\sw\} \rangle]$, then 
    there exists a primitive positive interpretation of $\bB_{\mathcal F}$ in $H_{\mathcal F}$,
    and  there exists a polynomial-time reduction  from $\Csp(\bB_{\mathcal F})$ to the ${\mathcal F}$-free orientation problem.
\end{corollary}
\begin{proof}
   By~\cref{thm:AK}, either $\Aut(H_{\mathcal F}) = \langle \Aut(D_{\mathcal F}) \cup \{-,\sw\} \rangle$
   or $\Aut(H_\mathcal F) = \langle \Aut(D_{\mathcal F}) \cup \{\sw\} \rangle$; the primitive positive definition
   of $S_6$ in $H_\mathcal F$ now follows by~\cref{lem:sw-fl,thm:sw}, respectively.
   Thus, by~\cref{lem:S6-interpret}, $\bC_\mathcal F$ has a primitive positive interpretation in $H_\mathcal F$ and so, 
   by~\cref{lem:BF-CF-T4} and composing interpretations, we conclude that $\bB_\mathcal F$ has a primitive positive
   interpretation in $H_\mathcal F$. 
   Finally, the polynomial time reduction follows from from~\cref{lem:pp-interpret-reduce}.
\end{proof}

\subsection{Complexity Classification}
Finally, we put all these cases together and obtain the following result. 

\begin{theorem}\label{thm:main}
    Let ${\mathcal F}$ be a finite set of finite tournaments. 
    Then exactly one of the following two cases applies.
    \begin{itemize}
        \item $K_3$ has a primitive positive  interpretation in ${\mathfrak B}_{\mathcal F}$ and in $H_{\mathcal F}$. In this case, $\Csp(H_{\mathcal F})$ and the
        ${\mathcal F}$-free orientation problem are NP-complete.  
        \item $\bB_\mathcal F$ has a constant operation or the minority operation as a polymorphism, and $H_{\mathcal F}$ \red{is homomorphically equivalent to a structure with} a ternary pseudo 
        \red{weak} near unanimity polymorphism.
        In this case, $\Csp(H_{\mathcal F})$ and the 
        ${\mathcal F}$-free orientation problem are in P. 
    \end{itemize}
\end{theorem}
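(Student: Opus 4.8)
The plan is to combine the Boolean classification from \cref{sect:dicho-oc} with the description of $\Aut(H_{\mathcal F})$ from \cref{sect:sym}, running a case distinction that parallels the proofs of \cref{thm:or-comp-classification} and of the results in \cref{sect:dicho-o}. First, by Schaefer's theorem (\cref{thm:schaefer}) together with \cref{lem:minority-BF}, either $K_3$ has a primitive positive interpretation in $\bB_{\mathcal F}$, or $\bB_{\mathcal F}$ has a constant operation or the Boolean minority operation as a polymorphism; by the same reasoning as in the first paragraph of the proof of \cref{thm:or-comp-classification}, these alternatives are mutually exclusive (a structure with a constant or a minority polymorphism cannot primitively positively interpret $K_3$). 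This already shows that at most one of the two cases in the statement can hold, so it remains to match each alternative to the corresponding complexity and polymorphism statements for $H_{\mathcal F}$; I would do this by splitting according to \cref{thm:AK}.

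Suppose first that $H_{\mathcal F}$ is the Rado graph or a Henson graph. By \cref{lem:Rado-Henson}, in the Rado case ${\mathcal F}$ contains no transitive tournament, so $T_k$ is ${\mathcal F}$-free for all $k$; in the Henson case there is no ${\mathcal F}$-free tournament on $n_{\mathcal F}$ vertices, so by \cref{obs:kandn} $T_k$ is ${\mathcal F}$-free for $k<n_{\mathcal F}$, while no tournament on at least $n_{\mathcal F}$ vertices is ${\mathcal F}$-free, because every such tournament has a subtournament on $n_{\mathcal F}$ vertices which embeds a member of ${\mathcal F}$. In both cases every relation $P_k$ of $\bB_{\mathcal F}$ is either empty or contains the two constant tuples, so $\bB_{\mathcal F}$ has a constant polymorphism and in particular does not interpret $K_3$ primitively positively; the ${\mathcal F}$-free orientation problem is then trivial (Rado case) or amounts to detecting an $n_{\mathcal F}$-clique (Henson case), hence is in P; and $H_{\mathcal F}$ has a ternary pseudo weak near unanimity polymorphism by \cref{expl:Rado-pwnu}. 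Thus the second case of the statement applies.

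Now suppose $H_{\mathcal F}$ is not homogeneous. Then $n_{\mathcal F}\geq 3$, and by \cref{lem:Rado-Henson} there is an ${\mathcal F}$-free tournament on $n_{\mathcal F}$ vertices, so $P_{n_{\mathcal F}}$ is nonempty but misses the all-ones tuple (which would encode $T_{n_{\mathcal F}}\in{\mathcal F}$); hence $\bB_{\mathcal F}$ has no constant polymorphism and the Boolean dichotomy above reduces to: either $K_3$ interprets in $\bB_{\mathcal F}$, or $\bB_{\mathcal F}$ has the minority polymorphism. By \cref{thm:AK} the group $\Aut(H_{\mathcal F})$ is one of four permutation groups, and in each of them one of \cref{cor:standard-pp-interpret}, \cref{cor:flip-reduction}, \cref{cor:sw-pp-interpret} provides a primitive positive interpretation of $\bB_{\mathcal F}$ in $H_{\mathcal F}$. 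In the first subcase, composing interpretations yields a primitive positive interpretation of $K_3$ in $H_{\mathcal F}$, so $\Csp(H_{\mathcal F})$, equivalently the ${\mathcal F}$-free orientation problem, is NP-hard by \cref{cor:pp-interpret-hard} and hence, being in NP, NP-complete; this is the first case of the statement. In the second subcase, $\Csp(\bB_{\mathcal F})$ is in P by Schaefer's theorem, so the ${\mathcal F}$-free orientation problem is in P by \cref{thm:reduction-to-boolean}, and it remains to produce a ternary pseudo weak near unanimity polymorphism of $H_{\mathcal F}$. For this I would mimic the last paragraph of the proof of \cref{thm:or-comp-classification}, working with the graph on vertex set $V^3$ whose edges are the componentwise triples of $U$-edges of $H_{\mathcal F}$: every finite subgraph of it admits an ${\mathcal F}$-free orientation, obtained by orienting each edge according to the parity over $\mathbb{Z}_2$ of the directions of the three corresponding edges in an arbitrary ${\mathcal F}$-free orientation of each coordinate projection inside $D_{\mathcal F}$; this orientation is ${\mathcal F}$-free because every relation $P_\ell$ of $\bB_{\mathcal F}$ is preserved by the minority operation. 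A compactness argument then produces a polymorphism $f$, and the homogeneity of $D_{\mathcal F}$ together with the fact that $H_{\mathcal F}$ is a model-complete core (\cref{thm:mc-core}) allow us to adjust $f$ by automorphisms $e_1,e_2,e_3$ so that the identities of \cref{eq:pwnu} hold, as in Proposition~6.6 of \cite{BPP-projective-homomorphisms}. Since a pseudo weak near unanimity polymorphism obstructs a primitive positive interpretation of $K_3$, we land in the second case of the statement.

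The hard part will be this last construction in the non-homogeneous, minority subcase: checking carefully that the componentwise-parity orientation is ${\mathcal F}$-free — in particular aligning the labellings of the induced tournaments so that the parity genuinely corresponds to applying the Boolean minority operation to the tuples $b_T$ — and that the compactness-plus-canonization step yields honest endomorphisms witnessing \cref{eq:pwnu}. The remaining points (exhaustiveness and mutual exclusivity of the two cases, and the fact that $\bB_{\mathcal F}$ carries a constant polymorphism precisely in the homogeneous cases) are routine.
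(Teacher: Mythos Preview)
Your proposal is correct and follows essentially the same route as the paper: the same case split via \cref{thm:AK} and \cref{lem:Rado-Henson}, the same use of \cref{lem:minority-BF} to reduce Schaefer's cases to constant/minority, and the same use of \cref{cor:standard-pp-interpret}, \cref{cor:flip-reduction}, \cref{cor:sw-pp-interpret} to pass the interpretation of $K_3$ from $\bB_{\mathcal F}$ to $H_{\mathcal F}$.

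The one place you work harder than necessary is the minority subcase. You propose to rebuild the ternary pseudo weak near unanimity polymorphism of $H_{\mathcal F}$ from scratch (parity orientations, compactness, canonisation). The paper avoids this entirely: since $\bB_{\mathcal F}$ has the minority polymorphism, so does $(\bB_{\mathcal F},{\bf 0},{\bf 1})$, and then \cref{thm:or-comp-classification} already yields a ternary pseudo weak near unanimity polymorphism $f$ of $(D_{\mathcal F},U)$. Because $H_{\mathcal F}$ is a reduct of $(D_{\mathcal F},U)$, the same $f$ (with the same witnessing endomorphisms) is automatically a pseudo weak near unanimity polymorphism of $H_{\mathcal F}$. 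This one-line observation replaces your entire last paragraph of construction.
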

\begin{proof}
It is well-known that the two cases are mutually disjoint: if \red{$K_3$ has a primitive positive interpretation in $H_{\mathcal F}$, and $H'$ is homomorphically equivalent to $H_{\mathcal F}$, then
$H'$ primitively positively interprets a structure which is homomorphically equivalent to 
$K_3$~\cite{wonderland}. 
But then, $H'$ cannot have a ternary pseudo weak near unanimity polymorphism (see, e.g., Corollary 10.3.5 in~\cite{Book}).}

We prove that one of the two cases applies by a case distinction on \red{$\Csp(H_\mathcal F)$} and on $\Aut(H_{\mathcal F})$ according to \cref{thm:AK}. 
If \red{$\Csp(H_\mathcal F$)} is the \red{class of all loopless graphs}  (in particular, if ${\mathcal F}$ is empty), then, by \cref{lem:Rado-Henson},
$\mathcal F$ contains no transitive tournament, so for all $n \in \{2,\dots, m_{\mathcal F}\}$ 
we have that
$\red{P_k}^{{\mathfrak B}_{\mathcal F}}$ 
contains both constant ${ \red{k} \choose 2}$-tuples and so, 
${\mathfrak B}_{\mathcal F}$ has a constant polymorphism. Moreover, \red{$H_{\mathcal F}$ is homomorphically equivalent to 
the Rado graph} (Example~\ref{expl:Rado}), which clearly has a pseudo near unanimity polymorphism (Example~\ref{expl:Rado-pwnu})
and the statement is trivial.


If \red{$\Csp(H_\mathcal F)$} is \red{the class of $K_n$-free graphs for some $n\ge 2$}, then, by \cref{lem:Rado-Henson}, $\mathcal F$ contains a
transitive tournament, there is no $\mathcal F$-free tournament on $n_\mathcal F$ vertices, and $\red{n} = n_\mathcal F$.
So, $\red{P_k}^{{\mathfrak B}_{\mathcal F}} = \emptyset$ for all $\red{k}\in \{n_\mathcal F, \dots, m_{\mathcal F}\}$, 
and $\red{P_k}^{{\mathfrak B}_{\mathcal F}}$ contains both constant ${ \red{k} \choose 2}$-tuples for all other $\red{k} < n_\mathcal F$. 
Hence, ${\mathfrak B}_{\mathcal F}$ again has a constant polymorphism. Moreover,  $H_{\mathcal F}$ \red{is homomorphically equivalent to} the Henson graph
$H_{n_{\mathcal F}}$ (Example~\ref{expl:Henson}), which
also has a pseudo near unanimity polymorphism as was mentioned earlier (Example~\ref{expl:Rado-pwnu}). 
The ${\mathcal F}$-free orientation problem is in P, because it suffices to check whether the give
input graph contains $K_{n_\mathcal F}$.

Otherwise, if $\Csp(H_\mathcal F)$ is \red{neither} the \red{class of all loopless graphs}  nor \red{the class of $K_n$-free graphs
for $n\ge 2$}, then \red{$H_\mathcal F$ is neither the Rado graph nor a Henson graph, and so} Theorem~\ref{thm:AK} implies that
$\Aut(H_\mathcal F) \in [\Aut(D_\mathcal F), \langle \Aut(D_\mathcal F) \cup\{-,\sw\}\rangle]$. \red{Also, }
Lemma~\ref{lem:Rado-Henson} implies that $\mathcal F$ contains a transitive tournament, and that there is at least one
$\mathcal F$-free oriented tournament on $n_\mathcal F$ vertices. In particular, this implies that $\bB_\mathcal F$ does not have a
constant polymorphism, since  $P_{n_\mathcal F}^{\bB_\mathcal F}$ is neither empty, nor contains constant tuples. 
Hence, by \cref{lem:minority-BF}, $\bB_\mathcal F$ either has the minority operation as polymorphism, or it primitively positively interprets
$K_3$. If the former holds, then the minority operation is a polymorphism of $(\bB_\mathcal F, \bf{0},\bf{1})$ as well,
and thus, $(D_\mathcal F,U)$ has a pseudo \red{weak} near unanimity polymorphism $f$ by \cref{thm:or-comp-classification}.
Clearly, $f$ is also a pseudo near  unanimity polymorphism of $H_\mathcal F$. The fact that the $\mathcal F$-free
orientation problem and $\Csp(H_\mathcal F)$ are in P follows from the polynomial-time tractability of
$\Csp(\bB_\mathcal F)$ and \cref{thm:reduction-to-boolean}.
Finally, suppose that $\bB_\mathcal F$ primitively positively interprets $K_3$. Since
$\Aut(H_\mathcal F) \in [\Aut(D_\mathcal F), \langle \Aut(D_\mathcal F) \cup\{-,\sw\}\rangle]$,
 then, by~\cref{thm:AK}, it must be the case that either $\Aut(H_\mathcal F) = \Aut(D_\mathcal F)$, 
 $\Aut(H_\mathcal F) =  \langle \Aut(D_\mathcal F) \cup\{-\}\rangle$, or $\Aut(H_\mathcal F)\in [\langle
\Aut(D_{\mathcal F}) \cup \{\sw\} \rangle, \langle \Aut(D_{\mathcal F}) \cup \{-,\sw\} \rangle]$. So, in each case
by Corollaries \ref{cor:standard-pp-interpret}, \ref{cor:flip-reduction}\red{,} and~\ref{cor:sw-pp-interpret},
respectively, there is a 
primitive positive interpretation of $\bB_\mathcal F$ in $H_\mathcal F$.  Finally, by the well-known
fact that primitive positive interpretations compose, we conclude that $K_3$ has a primitive positive interpretation in
$H_\mathcal F$, and $\Csp(H_\mathcal F)$ and the $\mathcal F$-free orientation problem are NP-complete by
\cref{lem:pp-interpret-reduce}.
 \end{proof}


\section{Examples and Applications}
\label{sect:examples}

If the reader is not familiar with constraint satisfaction theory, they
might find~\cref{thm:or-comp-classification,thm:main} not transparent enough. 
In order to address a broader audience, we first describe the minority
operation in terms of tournaments, and then, using this description, we
propose simpler versions of~\cref{thm:or-comp-classification,thm:main}. 
Some readers might find these versions more natural, and others might find
it redundant; the latter can skip the following subsection.

We conclude this section by providing some examples and applications
of \cref{thm:or-comp-classification,thm:main} to certain natural instances of
the $\mathcal F$-free orientation and orientation completion problems.  

\subsection{Minority and Tournaments}

Consider three tournaments $T^1$, $T^2$, and $T^3$ with vertex set $[n]$. 
The \textit{minority operation} maps the triple $(T^1,T^2,T^3)$ to a
tournament with vertex set $[n]$ which we denote by $\minority(T^1,T^2,T^3)$ and the edge
set is defined as follows. For $1\le i < j \le n$ there is an edge $(i,j)$ in
$\minority(T^1,T^2,T^3)$ if either $(i,j)\in E(T^1)\cap E(T^2)\cap E(T^3)$
or if $(i,j)$ is an edge in exactly one of the tournaments $T^1$, $T^2$, and $T^3$. 
Intuitively, when deciding whether there is an edge $(i,j)$ in  $\minority(T^1,T^2,T^3)$,
we first check if the three tournaments agree on $(i,j)$, and otherwise, 
we take the minority vote.
We extend these operation to take as input triples of tournaments $(T^1,T^2,T^3)$
with different sizes of vertex sets. Suppose that $V(T^1) = [n_1]$,  $V(T^2) = [n_2]$,
and $V(T^3) = [n_3]$, where  $n_1 \le n_2 \le n_3$. We define
$\minority(T^1,T^2,T^3): = \minority(T^1,T^2[n_1],T^3[n_1])$.

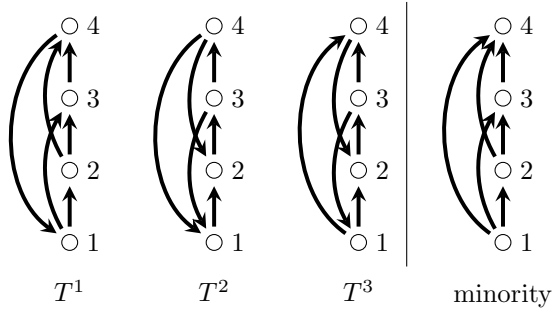
\begin{figure}[ht!]
\centering
\begin{tikzpicture}[scale = 0.8]

  \begin{scope}[xshift = 0cm, yshift = -6cm, scale = 0.8]
    \node [vertex, label=0:{$1$}] (1) at (-6,0) {};
    \node [vertex, label=0:{$2$}] (2) at (-6,1.5) {};
    \node [vertex, label=0:{$3$}] (3) at (-6,3) {};
    \node [vertex, label=0:{$4$}] (4) at (-6,4.5) {};
    \node (L1) at (-6,-1) {$T^1$};
      
     \foreach \from/\to in {1/2, 2/3, 3/4}     
    \draw [arc] (\from) to (\to);
    \draw [arc] (2) to [bend left] (4);
    \draw [arc] (1) to [bend left] (3);
    \draw [arc] (4) to [bend right = 55] (1);
  
    \node [vertex, label=0:{$1$}] (1) at (-3,0) {};
    \node [vertex, label=0:{$2$}] (2) at (-3,1.5) {};
    \node [vertex, label=0:{$3$}] (3) at (-3,3) {};
    \node [vertex, label=0:{$4$}] (4) at (-3,4.5) {};
    \node (L1) at (-3,-1) {$T^2$};
      
    \foreach \from/\to in {1/2, 2/3, 3/4}     
    \draw [arc] (\from) to (\to);
    \draw [arc] (4) to [bend right] (2);
    \draw [arc] (3) to [bend right] (1);
    \draw [arc] (4) to [bend right = 55] (1);

    \node [vertex, label=0:{$1$}] (1) at (0,0) {};
    \node [vertex, label=0:{$2$}] (2) at (0,1.5) {};
    \node [vertex, label=0:{$3$}] (3) at (0,3) {};
    \node [vertex, label=0:{$4$}] (4) at (0,4.5) {};
    \node (L1) at (0,-1) {$T^3$};
      
    \foreach \from/\to in {1/2, 2/3, 3/4}     
    \draw [arc] (\from) to (\to);
    \draw [arc] (4) to [bend right] (2);
    \draw [arc] (3) to [bend right] (1);
    \draw [arc] (1) to [bend left = 55] (4);

    \draw (1,-0.5) -- (1,5);

    \node [vertex, label=0:{$1$}] (1) at (3,0) {};
    \node [vertex, label=0:{$2$}] (2) at (3,1.5) {};
    \node [vertex, label=0:{$3$}] (3) at (3,3) {};
    \node [vertex, label=0:{$4$}] (4) at (3,4.5) {};
    \node (L1) at (3,-1.1) {$\minority$};
      
    \foreach \from/\to in {1/2, 2/3, 3/4}     
    \draw [arc] (\from) to (\to);
    \draw [arc] (2) to [bend left] (4);
    \draw [arc] (1) to [bend left] (3);
    \draw [arc] (1) to [bend left = 55] (4);
  \end{scope}
 
\end{tikzpicture}
\caption{An illustration of the minority operation acting on a triple $(T^1,T^2,T^3)$ of tournaments
isomorphic to $TC_4$, which yields a tournament isomorphic $T_4$.}
\label{fig:operations2}
\end{figure}

We say that a set of tournaments $\mathcal{T}$ is \textit{preserved by the minority operation}
if for every three tournaments $T^1,T^2,T^3$ with vertex set $[n_1]$, $[n_2]$, and $[n_3]$,
respectively, whenever each $T^i$, with $i\in\{1,2,3\}$, is isomorphic to some tournament in $\mathcal T$,
then  $\minority(T^1, T^2, T^3)$ is isomorphic to some tournament in $\mathcal{T}$.
For instance, the illustration of this operation in \cref{fig:operations2}
shows if a set $\mathcal T$ contains $TC_4$, but not $T_4$, then it is not preserved by the minority
operation.

Clearly, the previously defined  operation  on tournaments is a translation of
the minority operation on Boolean relational structures (Theorem~\ref{thm:schaefer}). 
We will use these translations to propose a classification of the complexity of the
$\mathcal{F}$-free orientation completion problem. Given a non-empty finite set of
finite tournaments $\mathcal{F}$, we denote by $\mathcal{F}_f$ the set of  $\mathcal{F}$-free
tournaments on at most $m_\mathcal{F}$ vertices.

\begin{lemma}\label{lem:operations}
    Let $\mathcal F$ be a finite set of finite tournaments. The Boolean structure
    $\bB_\mathcal{F}$ is preserved by the minority Boolean operation if
    and only if $\mathcal{F}_f$ is preserved by the minority operation.
\end{lemma}
\begin{proof}
By definition of $P_n(\bB_\mathcal F)$, it follows that for each $n\le m_\mathcal F$ the
relation $P_n$ is preserved by the minority operation if and only if for any three
$\mathcal F$-free tournaments $T^1, T^2, T^3$ with vertex set $[n]$, it is the case that
$\minority(T^1, T^2, T^3)$ is $\mathcal F$-free. Thus, it suffices to prove that the latter
condition holds if and only if $\mathcal{F}_f$ is preserved by the minority operation. One implication
is trivial, we prove the remaining one by contraposition. Suppose $\mathcal{F}_f$ is not 
preserved by minority, and let $T^1, T^2, T^3\in \mathcal{F}_f$ such that 
$\majority(T^1, T^2, T^3)\not\in \mathcal{F}_f$. Without loss of generality, assume that 
$T^1$ has the minimum number of vertices $n$ amongst these three tournaments. Since $\mathcal{F}_f$
is closed under taking subtournaments, $T^2[n]$ and $T^3[n]$ belong to 
$\mathcal{F}_f$. The claim follows since $\minority(T^1, T^2[n], T^3[n])$ is equal 
(by definition) to  $\minority(T^1, T^2, T^3)$ which does not belong to $\mathcal{F}_f$.
\end{proof}

Now, we translate \cref{thm:or-comp-classification,thm:main} in terms of the minority operation
and tournaments. 

\begin{corollary}\label{cor:or-completion-class}
    For every finite set of finite tournaments $\mathcal F$ one of the following cases holds.
    \begin{enumerate}
        \item $\mathcal F_f$ is preserved by the minority operation.  In this case, the
        \red{$\mathcal F$}-free orientation completions of a digraph $D$ correspond
        to the solution space of a system of linear equations over $\mathbb Z_2$
        (constructed from $D$ and $\mathcal F$). 
        \item Otherwise, $\mathcal F$-free orientation completion problem is NP-complete.
    \end{enumerate}
    In the first case, the $\mathcal F$-free orientation completion problem is in P.
\end{corollary}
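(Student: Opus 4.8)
The plan is to obtain \cref{cor:or-completion-class} as a graph-theoretic rephrasing of \cref{thm:or-comp-classification}: \cref{lem:operations} converts the Boolean polymorphism appearing there into the combinatorial minority operation, while \cref{thm:reduction-to-boolean} together with \cref{lem:minority-leqs} supplies the explicit linear system. First I would set aside the degenerate case $\mathcal F = \varnothing$ (for which $\mathcal F_f$ was not even defined): every oriented graph is then $\mathcal F$-free, so every digraph admits an $\mathcal F$-free orientation completion and the problem is trivially in P; hence from now on assume $\mathcal F \neq \varnothing$, so that $\mathcal F_f$ and $m_{\mathcal F} \ge 2$ make sense.

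Next I would reconcile the two Boolean structures occurring in the cited statements. The Boolean minority operation is idempotent, so it preserves the singleton relations $\mathbf 0 = \{0\}$ and $\mathbf 1 = \{1\}$; therefore $(\bB_{\mathcal F},\mathbf 0,\mathbf 1)$ has the minority operation as a polymorphism if and only if its reduct $\bB_{\mathcal F}$ does. By \cref{lem:operations}, the latter holds if and only if $\mathcal F_f$ is preserved by the combinatorial minority operation. Now \cref{thm:or-comp-classification} states that exactly one of the following occurs: either $K_3$ has a primitive positive interpretation in $(\bB_{\mathcal F},\mathbf 0,\mathbf 1)$ and the $\mathcal F$-free orientation completion problem is NP-complete, or $(\bB_{\mathcal F},\mathbf 0,\mathbf 1)$ has the minority polymorphism and the problem is in P. By the previous two sentences the second alternative is equivalent to ``$\mathcal F_f$ is preserved by the minority operation'', i.e.\ to Case~1 of the corollary, and the first alternative is its negation, i.e.\ Case~2. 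This already yields the P versus NP-complete dichotomy, and in particular that the problem is in P in Case~1.

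It remains to describe, in Case~1, the set of $\mathcal F$-free orientation completions of a digraph $D$ as an affine subspace. Assuming $\mathcal F_f$ preserved by the minority operation, the above gives that each relation $P_n^{\bB_{\mathcal F}}$, for $2 \le n \le m_{\mathcal F}$, is preserved by the Boolean minority operation, hence by \cref{lem:minority-leqs} is the solution space of a fixed system $\Sigma_n$ of linear equations over $\mathbb Z_2$ in the variables indexed by the $2$-subsets of $\{1,\dots,n\}$. Given $D$ with vertices $v_1,\dots,v_N$, run the reduction of \cref{thm:reduction-to-boolean}(2): it introduces variables $x_{i,j}$ (for $i<j$), posts a constraint $P_\ell(x_{i_1,i_2},\dots)$ for every induced semicomplete subdigraph on $\ell \le m_{\mathcal F}$ vertices, and posts $\mathbf 1(x_{i,j})$ or $\mathbf 0(x_{i,j})$ for each already-oriented edge. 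Replacing each $P_\ell$-constraint by the corresponding substitution instance of $\Sigma_\ell$, each $\mathbf 1(x_{i,j})$ by $x_{i,j}=1$, and each $\mathbf 0(x_{i,j})$ by $x_{i,j}=0$, produces in polynomial time a linear system $\mathcal L(D)$ over $\mathbb Z_2$. Only the variables $x_{i,j}$ with $v_iv_j$ an edge of the underlying graph of $D$ occur in $\mathcal L(D)$, and reading a $\{0,1\}$-assignment of these as the orientation that makes $v_iv_j$ point from $v_i$ to $v_j$ exactly when $x_{i,j}=1$, the solutions of $\mathcal L(D)$ are in bijection with the $\mathcal F$-free orientation completions of $D$; this is immediate from the construction underlying \cref{thm:reduction-to-boolean}, and it uses that every tournament in $\mathcal F$ has at most $m_{\mathcal F}$ vertices, so that an orientation completion is $\mathcal F$-free iff all of its induced subtournaments on at most $m_{\mathcal F}$ vertices are $\mathcal F$-free — which is what the $\Sigma_\ell$'s enforce. (Solvability of $\mathcal L(D)$ can of course also be decided by Gaussian elimination, consistently with \cref{thm:or-comp-classification}.)

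I do not expect a genuine obstacle: the substance is entirely contained in \cref{thm:or-comp-classification,thm:reduction-to-boolean,lem:operations,lem:minority-leqs}. The only points needing care are the idempotency remark bridging $\bB_{\mathcal F}$ and $(\bB_{\mathcal F},\mathbf 0,\mathbf 1)$, and the observation that the reduction of \cref{thm:reduction-to-boolean} posts only constraints lying in the minority clone (the relations $P_\ell$ and the singletons $\mathbf 0,\mathbf 1$), so that it literally outputs a linear system whose \emph{entire} solution space — not merely its non-emptiness — records the $\mathcal F$-free orientation completions of $D$.
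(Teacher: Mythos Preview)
Your proposal is correct and follows the same route as the paper, which simply says the corollary ``follows directly from \cref{thm:or-comp-classification,lem:operations}''. You supply considerably more detail than the paper does---in particular the idempotency observation bridging $\bB_{\mathcal F}$ and $(\bB_{\mathcal F},\mathbf 0,\mathbf 1)$, the handling of the degenerate case $\mathcal F=\varnothing$, and the explicit construction of the linear system via \cref{thm:reduction-to-boolean} and \cref{lem:minority-leqs}---all of which the paper leaves implicit.
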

\begin{proof}
    It follows directly from \cref{thm:or-comp-classification,lem:operations}.
\end{proof}

\begin{corollary}\label{cor:or-classification}
    For every finite set of finite tournaments $\mathcal F$ one of the following cases holds.
    \begin{enumerate}
        \item $\mathcal F$ contains no transitive tournament. In this case, every graph admits
        an $\mathcal F$-free orientation.
        \item $\mathcal F_f$ is preserved by the minority operation.  In this case, the
        \red{$\mathcal F$}-free orientations of a graph $G$ correspond
        to the solution space of a system of linear equations over $\mathbb Z_2$
        (constructed from $G$ and $\mathcal F$).
        \item Otherwise, the $\mathcal F$-free orientation completion problem is NP-complete.
    \end{enumerate}
    In cases 1 and 2, the $\mathcal F$-free orientation problem is in P.
\end{corollary}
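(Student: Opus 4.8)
The plan is to deduce the corollary from Theorem~\ref{thm:main}, Corollary~\ref{cor:or-completion-class}, Lemma~\ref{lem:operations}, Lemma~\ref{lem:minority-leqs}, and the reduction of Theorem~\ref{thm:reduction-to-boolean}, organising the argument around two dichotomies: whether $\mathcal F$ contains a transitive tournament, and whether $\mathcal F_f$ is closed under the minority operation. Together these make the three itemised cases exhaustive: if $\mathcal F$ has no transitive tournament we land in case~1; if it does have one but $\mathcal F_f$ is minority-closed we land in case~2; and in the remaining situation (a transitive tournament in $\mathcal F$, and $\mathcal F_f$ not minority-closed) we land in case~3.

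For case~1, I would simply recall the observation from Section~\ref{sect:MT}: if every tournament in $\mathcal F$ contains a directed cycle --- equivalently, if $\mathcal F$ contains no transitive tournament --- then orienting the edges of any finite graph along an arbitrary linear order of its vertices yields an $\mathcal F$-free orientation. Hence every graph admits an $\mathcal F$-free orientation, and the orientation problem is trivially in~P.

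For case~2, I would first apply Lemma~\ref{lem:operations} to rephrase the hypothesis as ``$\bB_{\mathcal F}$ is preserved by the Boolean minority operation''. By Lemma~\ref{lem:minority-leqs}, each relation $P_n^{\bB_{\mathcal F}}$ is then the solution set of an $\mathbb{Z}_2$-linear system, and a description of this system can be computed from $\mathcal F$. Given a graph $G$, the reduction of Theorem~\ref{thm:reduction-to-boolean}(1) produces an instance of $\Csp(\bB_{\mathcal F})$ with a variable $x_{i,j}$ for each potential edge and one constraint $P_\ell(\dots)$ for each clique of $G$ on $\ell \le m_{\mathcal F}$ vertices; substituting the linear descriptions of the $P_\ell$ turns this instance into a single $\mathbb{Z}_2$-linear system $L_{G,\mathcal F}$. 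The key point to verify is that the solutions of $L_{G,\mathcal F}$, read off on the variables indexing edges of $G$, are exactly the $\mathcal F$-free orientations of $G$: since every tournament in $\mathcal F$ has at most $m_{\mathcal F}$ vertices, an oriented graph fails to be $\mathcal F$-free precisely when some clique of $G$ on at most $m_{\mathcal F}$ vertices is oriented outside the corresponding $P_\ell$. Solving $L_{G,\mathcal F}$ by Gaussian elimination then puts the orientation problem in~P and establishes the claimed correspondence.

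For case~3, it suffices to observe that ``$\mathcal F_f$ is not preserved by the minority operation'' is precisely the condition under which Corollary~\ref{cor:or-completion-class} (equivalently, Theorem~\ref{thm:or-comp-classification} combined with Lemma~\ref{lem:operations}) declares the $\mathcal F$-free orientation \emph{completion} problem NP-complete; no further argument is needed. I do not anticipate a genuine obstacle: the work is essentially bookkeeping --- checking exhaustiveness of the three cases and, in case~2, carefully matching satisfying assignments of $L_{G,\mathcal F}$ with $\mathcal F$-free orientations of $G$ while noting that $L_{G,\mathcal F}$ is polynomial-time constructible from $G$ and $\mathcal F$. If one wanted the sharper statement for the orientation problem itself inside case~3, Theorem~\ref{thm:main} would supply it (it is in~P exactly in the Henson-type cases, where $\bB_{\mathcal F}$ has a constant polymorphism, and NP-complete otherwise), but this refinement goes beyond the corollary as stated.
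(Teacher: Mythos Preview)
Your proposal is correct and takes essentially the same route as the paper, which records only ``Immediate implication of Theorem~\ref{thm:main} and Lemma~\ref{lem:operations}''; you simply unpack that one line, handling case~1 via the acyclic-orientation observation, case~2 via Lemma~\ref{lem:operations}, Lemma~\ref{lem:minority-leqs} and the reduction of Theorem~\ref{thm:reduction-to-boolean}, and case~3 via Corollary~\ref{cor:or-completion-class}. One small point in your favour: since item~3 of the statement asserts NP-completeness of the \emph{completion} problem, your reference to Corollary~\ref{cor:or-completion-class} is the directly apposite one---Theorem~\ref{thm:main}, which the paper cites, speaks only about the orientation problem, and there are $\mathcal F$ in case~3 (e.g.\ when $\bB_{\mathcal F}$ has a constant polymorphism but no minority polymorphism) for which the orientation problem is in~P while completion is NP-complete.
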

\begin{proof}
    Immediate implication of \cref{thm:main,lem:operations}.
\end{proof}

We can already illustrate these corollaries using~\cref{fig:four-vertices}. 

\begin{example}\label{ex:T4-TC4}
Notice that by the minority operation depicted in~\cref{fig:operations2}, if $\mathcal F$
contains $T_4$ and $TC_4$ \red{red} $\mathcal F$-free, then $\mathcal F_f$ is not preserved by the 
minority operation. Thus, in any such case, the $\mathcal F$-free orientation
and the $\mathcal F$-free orientation completion problems are NP-complete.
\end{example}

\subsection{Sets with $T_3$ or $\overrightarrow{C_3}$}

Now, we consider the special cases when $\mathcal F$ contains some small
tournament. Specifically, when $\mathcal F$ contains at least one tournament
on at most $3$ vertices. The cases when $\mathcal F$ contains $T_1$ is trivial, 
and it is also not hard to settle the case when  $\mathcal F$ contains $T_2$.

Recall that given a digraph $D$, we write $U$ to \red{denote} the symmetric closure
of $E$. It is not hard to notice that an oriented graph $D$ can be described as a
$\mathbb{Z}_2$-colouring $c\colon U\to \mathbb{Z}_2$ such that for
every pair $ij\in U$ the equality $c(i,j) + c(j,i) = 1$ is satisfied. 
Equivalently, orientation completions  of a digraph $D$
are in one to one correspondence with solutions to the linear equation
$x_{ij} + x_{ji} = 1$ over $\mathbb{Z}_2$, where $i,j$ ranges over
adjacent vertices of $G$, and $x_{ij} = 1$ for each $(i,j)\in E$ such that
$(j,i)\not\in E$. Now, notice that every hamiltonian oriented
path of $\overrightarrow{C_3}$ has an even number of forward edges, but
there are hamiltonian oriented paths of $T_3$ with different parity. Thus,
the $T_3$-free orientation completions of $D$ are in one-to-one correspondence
to the solutions of the system  with variables $x_{ij}$ for
$ij\in U$ and linear equations 
\begin{align*}
x_{ij} & = 1 \text{ where } (i,j)\in E \text{ and } (j,i)\not\in E, \\
x_{ij} + x_{ji} & = 1 \text{ where } ij \in U, \\
 x_{ij} + x_{jk} & = 0 \text{ where } ij, jk, ik \in U.
 \end{align*}
 We denote by Sys$_3(D)$ the previously described system of linear equations. 
For the following statement, we assume that the input digraph of
the $\mathcal F$-free orientation problem is given with vertex set $[n]$
for $n = |V(D)|$. Clearly, this assumption is done without loss of
generality.

\begin{corollary}\label{cor:3-vertices-or-com}
    Let $\mathcal F$ be a finite set of finite tournaments. If the smallest
    tournament in $\mathcal F$ has exactly three vertices, then one of the following
    holds.
    \begin{enumerate}
        \item $\mathcal F$ contains both $T_3$ and $\overrightarrow{C_3}$.
        In this case, a digraph $D$ admits an $\mathcal F$-free orientation completion
        if and only if $D$ contains no semicomplete digraph on $3$ vertices. 
        \item $\mathcal F$ contains a $T_3$ but not $\overrightarrow{C_3}$ . In this case, 
        the $\mathcal F$-free orientations of a digraph $D$ correspond to the solution 
        space of Sys$_3(D)$.
         \item $\mathcal F$ contains a $\overrightarrow{C_3}$ but not $T_3$. 
        In this case, the $\mathcal F$-free orientation completion problem is
        NP-complete.
    \end{enumerate}
    In cases 1 and 2, the $\mathcal F$-free orientation completion problem is in P.
\end{corollary}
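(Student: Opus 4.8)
The plan is to split the analysis into three cases according to which tournaments on three vertices lie in $\mathcal{F}$: since the smallest tournament in $\mathcal{F}$ has exactly three vertices, $\mathcal{F}$ contains $T_3$, contains $\overrightarrow{C_3}$, or contains both, and these correspond respectively to the three items of the statement. The elementary fact I will use repeatedly is classical: every tournament on at least four vertices contains $T_3$. (Briefly: in a triangle, if a vertex has two out-neighbours, or two in-neighbours, one of the three triples is transitive; hence in a tournament with no transitive triple every vertex has out- and in-degree at most one, which is impossible once $n\ge 4$.) In particular, if $T_3\in\mathcal{F}$ then an oriented graph is $\mathcal{F}$-free if and only if it is $T_3$-free, and the only $\mathcal{F}$-free tournaments have at most three vertices. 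This already disposes of Case~1, where $T_3,\overrightarrow{C_3}\in\mathcal{F}$: then no orientation of $K_3$ is $\mathcal{F}$-free and $\mathcal{F}$ contains both tournaments on $m=3$ vertices, so the first item of \cref{lem:forced-edges} holds, and as observed in the proof of that lemma it follows that a digraph $D$ has an $\mathcal{F}$-free orientation completion if and only if $D$ contains no semicomplete subdigraph on three vertices, which is decidable in polynomial time.

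For Case~2, where $T_3\in\mathcal{F}$ and $\overrightarrow{C_3}\notin\mathcal{F}$, the fact above says that an oriented graph is $\mathcal{F}$-free precisely when it is $T_3$-free, i.e.\ when every triangle of its underlying graph is oriented cyclically. I will then check directly that the $\mathcal{F}$-free orientation completions of $D$ are exactly the $\mathbb{Z}_2$-solutions of Sys$_3(D)$: the equations $x_{ij}=1$ (for the non-symmetric edges) together with the equations $x_{ij}+x_{ji}=1$ put orientation completions $D'$ of $D$ in bijection with the solutions of these two families; and, given such a solution, the equations $x_{ij}+x_{jk}=0$ ranging over all ordered triples with $ij,jk,ik\in U$ are equivalent to $x_{ij}=x_{jk}=x_{ki}$ on every triangle $\{i,j,k\}$ of $u(D)$, which is exactly the statement that this triangle is oriented cyclically in $D'$. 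Hence Sys$_3(D)$ describes precisely the $T_3$-free orientation completions of $D$, so the $\mathcal{F}$-free orientation completion problem reduces to solving a linear system over $\mathbb{Z}_2$ and is in P.

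For Case~3, where $\overrightarrow{C_3}\in\mathcal{F}$ and $T_3\notin\mathcal{F}$, the plan is to show that $\mathcal{F}_f$ is not preserved by the minority operation and invoke \cref{cor:or-completion-class}. Since every tournament in $\mathcal{F}$ has at least three vertices and $T_3\notin\mathcal{F}$, the tournament $T_3$ is $\mathcal{F}$-free, so $T_3\in\mathcal{F}_f$; and $\overrightarrow{C_3}\in\mathcal{F}$, so $\overrightarrow{C_3}\notin\mathcal{F}_f$. It then suffices to exhibit three labelled copies of $T_3$ on $\{1,2,3\}$ whose minority is a directed triangle: the linear orders $1<2<3$, $2<1<3$, and $1<3<2$ correspond under the coding of \cref{fig:3-vertices} to the tuples $(1,1,1)$, $(0,1,1)$, $(1,1,0)$, whose coordinatewise sum modulo $2$ is $(0,1,0)$, a cyclic orientation of $K_3$. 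Thus $\mathcal{F}_f$ is not preserved by the minority operation, and \cref{cor:or-completion-class} gives that the $\mathcal{F}$-free orientation completion problem is NP-complete (membership in NP being clear).

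The only step requiring genuine care is Case~2: one must verify that Sys$_3(D)$ captures exactly the $T_3$-free orientation completions, and in particular that the tournaments of size at least four possibly present in $\mathcal{F}$ impose no additional constraints on the completion. The latter is exactly where the classical fact that every tournament on at least four vertices contains $T_3$ is needed, so that $\mathcal{F}$-freeness collapses to $T_3$-freeness, a condition purely on triangles.
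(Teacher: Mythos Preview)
Your proof is correct and follows essentially the same route as the paper: Case~1 is immediate, Case~2 reduces $\mathcal{F}$-freeness to $T_3$-freeness via the classical fact that every tournament on at least four vertices contains $T_3$ and then reads off the linear-system description, and Case~3 exhibits three labellings of $T_3$ whose minority is a directed triangle to conclude NP-completeness. The only cosmetic difference is that you cite \cref{cor:or-completion-class} (the tournament-level reformulation) where the paper invokes \cref{thm:or-comp-classification} directly on $P_3^{\bB_{\mathcal F}}$, which amounts to the same thing.
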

\begin{proof}
The first case is immediate to see. To prove the second case, notice
that every tournament on \red{at least} $4$ vertices contains a transitive triangle and so, 
an orientation completion $D'$ of $D$ is $\mathcal F$-free if and only if
it is  $T_3$-free. Thus, the second case follows from the two paragraphs
preceding this corollary. Finally, the case when 
$\overrightarrow{C_3}\in \mathcal{F}$ but $T_3 \not\in \mathcal F$
follows by noticing that $P_3^{\bB_{\mathcal F}}$ is not preserved by minority:
the tuples $(1,1,0)$, $(0,1,1)$, and $(1,1,1)$ correspond to three permutations
of $T_3$ (i.e., three $\mathcal F$-free tournaments), while
$\minority((1,1,0),(0,1,1),(1,1,1)) = (0,1,0)$ which corresponds to a directed
triangle (i.e., a non-$\mathcal F$-free tournament). Thus, the $\mathcal F$-free
orientation completion problem in NP-complete by~\cref{thm:or-comp-classification,}.
\end{proof}

Notice that if $G$ is a graph with vertex set $[n]$, then Sys$_3(G)$ does not
contain equations of the form $x_{ij} = 1$.

\begin{corollary}\label{cor:3-vertices-or}
    Let $\mathcal F$ be a finite set of finite tournaments. If the smallest
    tournament in $\mathcal F$ has exactly three vertices, then one of the following
    holds.
    \begin{enumerate}
        \item $\mathcal F$ contains both $T_3$ and $\overrightarrow{C_3}$.
        In this case, a graph $G$ admits an $\mathcal F$-free orientation 
        if and only if the $G$ is $K_3$-free. 
        \item $\mathcal F$ contains a $T_3$ but not $\overrightarrow{C_3}$ . 
        In this case, the $\mathcal F$-free orientations of a graph $G$
        correspond to the solution  space of Sys$_3(G)$.
         \item $\mathcal F$ contains $\overrightarrow{C_3}$ but not $T_3$. 
        In this case, every graph admits an $\mathcal F$-free orientation.
    \end{enumerate}
    In each of these cases, the $\mathcal F$-free orientation completion problem is in P.
\end{corollary}
\begin{proof}
    Cases 1 and 3 are trivial, and case 2 follows as a particular instance
    of the second case in \cref{cor:3-vertices-or-com}.
\end{proof}

It is also possible to have an ad-hoc reduction from an NP-complete
problem to the $\overrightarrow{C_3}$-orientation completion problem. 
The most natural problem in this scenario is not-all-equal $3$-SAT:
the input is a $3$-SAT instance; and a solution is where at least one
variable per clause is false, and one is true. As a sanity check, one
can easily verify that the gadget $(D,x_0,x_1,y_0,y_1,z_0,z_1)$ in
\cref{fig:gadget-NAE} yields a reduction from not-all-equal $3$-SAT
to the $\overrightarrow{C_3}$-free orientation. To see this,
simply notice that $(x_0,x_1)$ and $(a,b)$ force each other, and
symmetrically, $(y_0, y_1)$ and $(b,c)$ force each other, and  $(z_0,z_1)$
and $(c,a)$ force each other.
Since the triangle $abc$ must be oriented transitively, it must be the case
that at least one of the edge $x_0x_1$, $y_0y_1$, $z_0z_1$ is oriented from 
$0$ to $1$, but not the three of them. Moreover, any transitive orientation
of $abc$ extends to a $\overrightarrow{C_3}$-free orientation of $D$. With
 these arguments, one can notice that \red{given an instance} $\phi$ of not-all-equal
$3$-SAT, there is a digraph $D$ such that $D$ admits a $\overrightarrow{C_3}$-free
orientation completion if and only if $\phi$ is a yes instance
to not-all-equal $3$-\red{SAT} (and $D$ can be constructed in polynomial time
with respect to $D$). Actually, the reader familiar with primitive positive interpretations 
can notice that the gadget $(D,x_0,x_1,y_0,y_1,z_0,z_1)$ can be translated
into a primitive positive interpretation of $(\{0,1\},\{0,1\}^3\setminus\{(0,0,0),(1,1,1)\})$ 
in $D_\mathcal F$ --- which is guaranteed to exist by \cref{thm:or-comp-classification}
and the well-known fact that $K_3$ primitively positively interprets any finite structure~\cite{Book}, and that 
primitive positive interpretations compose. 

\begin{figure}[ht!]
\centering
\begin{tikzpicture}

  \begin{scope}[xshift = 0cm]
    \node [vertex, label = -27:{$a$}] (a) at (150:1) {};
    \node [vertex, label = 200:{$b$}] (b) at (30:1) {};
    \node [vertex, label = above:{$c$}] (c) at (-90:1) {};

    \node [vertex, label = above:{$x_0$}] (x0) at (-cos{30}, 3.7) {};
    \node [vertex, label = above:{$x_1$}] (x1) at (cos{30},3.7) {};
    \node [vertex] (1) at (-cos{30}-2*sin{120},2.1) {};
    \node [vertex] (2) at (-cos{30},2.1) {};
    \node [vertex] (3) at (cos{30},2.1) {};
    \node [vertex] (4) at (cos{30}+2*sin{120},2.1) {};
      
    \foreach \from/\to in {a/b, b/c, c/a, x0/x1, x0/3, x1/1, x0/4,
    1/2, 3/4, 2/a, b/3}     
    \draw [edge] (\from) to (\to);

    \foreach \from/\to in {1/x0, x1/2, x0/3, 4/x1, a/1, 2/b, b/4, 3/a}     
    \draw [arc] (\from) to (\to);

    \node [vertex, label = left:{$z_0$}] (z0) at ({300-76.8265}:3.8) {};
    \node [vertex, label = left:{$z_1$}] (z1) at (196.8264:3.8) {};
    \node [vertex] (1) at ({300-38.9483}:3.34066) {};
    \node [vertex] (2) at ({300-67.5891}:2.27156) {};
    \node [vertex] (3) at (187.5891:2.27156) {};
    \node [vertex] (4) at (158.9483:3.34066) {};
      
    \foreach \from/\to in {z0/z1, z0/3, z1/1, z0/4, 1/2, 3/4, 2/c, a/3}     
    \draw [edge] (\from) to (\to);

    \foreach \from/\to in {1/z0, z1/2, z0/3, 4/z1, c/1, 2/a, a/4, 3/c}     
    \draw [arc] (\from) to (\to);
    
    \node [vertex, label = right:{$y_0$}] (y0) at ({60-76.8265}:3.8) {};
    \node [vertex, label = right:{$y_1$}] (y1) at (316.8264:3.8) {};
    \node [vertex] (1) at ({60-38.9483}:3.34066) {};
    \node [vertex] (2) at ({60-67.5891}:2.27156) {};
    \node [vertex] (3) at (307.5891:2.27156) {};
    \node [vertex] (4) at (278.9483:3.34066) {};
      
    \foreach \from/\to in {y0/y1, y0/3, y1/1, y0/4, 1/2, 3/4, 2/b, c/3}     
    \draw [edge] (\from) to (\to);

    \foreach \from/\to in {1/y0, y1/2, y0/3, 4/y1, b/1, 2/c, c/4, 3/b}     
    \draw [arc] (\from) to (\to);

    \node (L1) at (0,-4) {$(D,x_0,x_1,y_0,y_1,z_0,z_1)$};
    
  \end{scope}
  
\end{tikzpicture}
\caption{A gadget for reducing not-all-equal $3$-\red{SAT} to 
the $\overrightarrow{C_3}$-free orientation completion. Equivalently, 
the interpreting formula for the primitive positive interpretation of
$(\{0,1\},\{0,1\}^3\setminus\{(0,0,0),(1,1,1)\})$ in  $(D_\mathcal F, U)$
for $\mathcal F = \{\overrightarrow{C_3}\}$.}
\label{fig:gadget-NAE}
\end{figure}
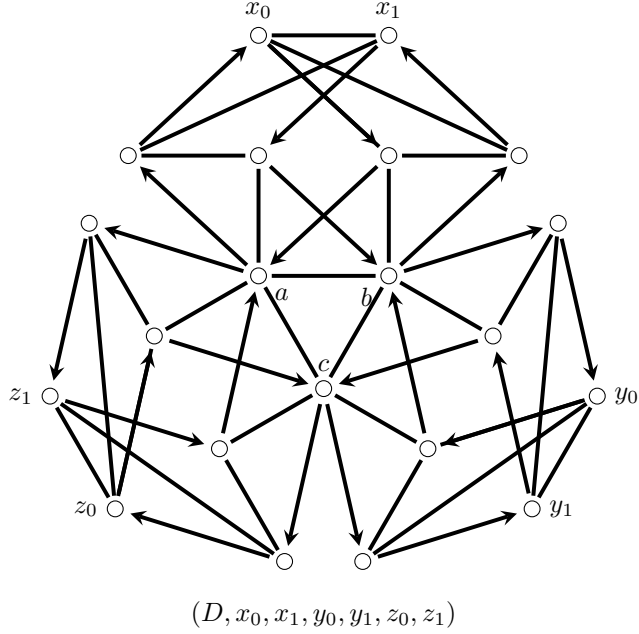

\subsection{Tournaments on four vertices}
In the previous subsection we considered all cases where  $\mathcal F$ contains
at least one tournament on at most $3$ vertices. In this subsection, we apply
our main results to classify the complexity of the $\mathcal F$-free orientation
and orientation completion problem if all tournaments in $\mathcal F$ have exactly
$4$ vertices.
Similarly as in the previous subsection, we introduce three families of systems
of linear equations over $\mathbb Z_2$, whose solution space will encode
$\mathcal F$-free orientation completions for certain sets $\mathcal F$.

Recall that in \cref{fig:four-vertices} we depicted all four tournaments
on $4$ vertices, up to isomorphism. Notice that if $T_4$ is described by
the linear ordering of $[4]$, then the cycle $1342$ has $3$ forward edges
and $1$ backward edge, i.e., $x_{12} + x_{23} + x_{34} + x_{41} = 1$ 
according to the coding described in the previous subsection. Similarly, 
it is not hard to find a hamiltonian oriented cycle $ijkl$ in $TC_4$  such that 
$x_{ij} + x_{jk} + x_{kl} + x_{li} = 1$. On the contrary, it is evident
that every hamiltonian cycle $ijkl$ of $C_3^+$ and of $C_3^-$ have an even number
of forward edges, i.e., $x_{ij} + x_{jk} + x_{kl} + x_{li} = 0$. Thus, similar
as the set up of Sys$_3(D)$, there is a system of linear equations Sys$_4(D)$
whose solution space correspond to the $\{T_4,TC_4\}$-free orientation completions
of a digraph $D$. 

It is even simpler to verify that each vertex in $C_3^+$ has an even number
of outneighbours, while for $T_4$, $TC_4$, and $C_3^-$, there is at least
one vertex with an odd number of outneighours. Again, this leads to a system
of linear equations Sys$_+(D)$ with variables $x_{ij}$ where $ij\in U(D)$,
such that the solution space (over $\mathbb Z_2$) of Sys$_+(D)$ corresponds to
the $\{T_4,TC_4,C_3^-\}$-free orientation completions of $D$. Moreover, notice
that $C_3^-$ is the flip of $C_3^+$, thus an oriented graph $D'$ 
is $\{T_4,TC_4,C_3^-\}$-free if and only $-D$  is $\{T_4,TC_4,C_3^+\}$-free.
This means that if $x$ is a solution to Sys$_+(D)$ and we interpret
$x_{ij} = 1$ as an edge $(j,i)$ (instead of $(i,j)$), we see that
the solution space of Sys$_+(D)$ also encondes the $\{T_4,TC_4,C_3^+\}$-free
orientation completions of $D$.

\begin{corollary}\label{cor:4-vertices-or-com}
    If $\mathcal F$ is a non-empty set of tournaments on $4$ vertices, then one of the following
    holds.
    \begin{enumerate}
        \item $\mathcal F$ contains all tournaments on $4$ vertices, up to isomorphism. 
        In this case, a digraph $D$ admits an $\mathcal F$-free orientation completion
        if and only if it does not contain a semicomplete digraph on $4$ vertices.
        \item  $\mathcal F$ contains both $T_4$ and $TC_4$. In this case, 
        the $\mathcal F$-free orientation completions of a digraph $D$
        correspond to the solution space of Sys$_4(D)$ or of Sys$_+(D)$. 
        \item $\mathcal F$ contains at most one of $T_4$ or $TC_4$. In this case,
         the $\mathcal F$-free orientation completion is NP-complete. 
    \end{enumerate}
    In cases 1 and 2, the $\mathcal F$-free orientation completion problem is in P.
\end{corollary}
\begin{proof}
The first case is immediate, and the second one is argued in the paragraph preceding this statement. 
To prove item 3, first notice that the case when $T_4\in \mathcal F$ and $TC_4\not\in \mathcal F$,
follows from \cref{ex:T4-TC4}. Otherwise, suppose that $T_4\not\in \mathcal F$, and so, $T_4$
is $\mathcal F$-free. Since $\mathcal F$ contains at least one tournament on $4$ vertices, and 
$T_4$ is $\mathcal F$-free,  it follows from \cref{lem:Tn-collapse},  that $P_n$ is not preserved
by the Boolean minority operation. The hardness of the $\mathcal F$-free orientation completion
problem now follows from \cref{thm:or-comp-classification}.
\end{proof}

\begin{corollary}\label{cor:4-vertices-or}
    If $\mathcal F$ is a set of tournaments on $4$ vertices, then one of the following
    holds.
    \begin{enumerate}
        \item $\mathcal F$ contains all tournaments on $4$ vertices, up to isomorphism. 
        In this case, a graph $G$ admits an $\mathcal F$-free orientation 
        if and only if $G$ is $K_4$-free.
         \item  $\mathcal F$ contains both $T_4$ and $TC_4$. In this case, 
        the $\mathcal F$-free orientations of a graph $G$ correspond to the solution
        space of Sys$_4(G)$ or of Sys$_+(G)$.
        \item $\mathcal F$ does not contain $T_4$. In this case, 
        every graph admits an $\mathcal F$-free orientation.
         \item $\mathcal F$ contains $T_4$ but not $TC_4$.
        In this case, the $\mathcal F$-free orientation problem is NP-complete.
    \end{enumerate}
    In cases 1-3, the $\mathcal F$-free orientation completion problem is in P.
\end{corollary}
\begin{proof}
Cases 1 and 2 follow as particular instances of cases 1 and 2 from \cref{cor:4-vertices-or-com}. 
Case 3 is trivial, and the fourth one follows from~\cref{ex:T4-TC4}.
\end{proof}

\subsection{Transitive Tournaments}
We conclude our series of examples by considering the cases where  
$\mathcal F$ does not contain a transitive tournament, and where 
$\mathcal F$ only contains transitive tournaments. 

\begin{proposition}\label{thm:or-comp-transitive}
    Let $\mathcal F$ be a non-empty finite set of finite tournaments. If $\mathcal F$
    does not contain any transitive tournament, then the following statements hold.
    \begin{enumerate}
        \item The $\mathcal F$-free orientation completion problem is NP-complete. 
        Moreover, this problem remains NP-hard even when the input is restricted
        to digraphs with no semicomplete subdigraph with $m_\mathcal F+1$ vertices.
        \item The $\mathcal F$-free orientation problem is trivial and in P.
    \end{enumerate}
\end{proposition}
\begin{proof}
    We only prove the first statement as the second one is evident. Let  $\mathcal F'$ 
    be the set obtained from $\mathcal F$ by adding all tournament\red{s} on $m_\mathcal F +1$ vertices.
    It is not hard to notice that the $\mathcal F$-free orientation completion problem restricted
    to digraphs \red{with} no semicomplete digraph on $m_\mathcal F +1$ vertices is polynomial time
    equivalent to the $\mathcal F'$-free orientation completion problem: on the one hand, 
    if $D$ is a digraph with no semicomplete graph on $m_\mathcal F+1$ vertices, then 
    an orientation completion $D'$ of $D$ is $\mathcal F$-free if and only if $D'$ is 
    $\mathcal F'$-free; on the other \red{hand}, if $D$ is an input to the $\mathcal F'$-free
    orientation completion, then one can first verify (in polynomial-time) if $D$ contains
    a semicomplete digraph on $m_\mathcal F + 1$-vertices (if it does, reject), and \red{then}
    solve the $\mathcal F$-free orientation completion problem. 
    Finally, the fact that the $\mathcal F'$-free orientation completion problem is NP-complete,
    follows from the observation that $T_{m_\mathcal F}$ is $\mathcal F'$-free, but  since
    $\mathcal F$ is non-empty, there must be at lest one tournament $T$ on $m_\mathcal F$ vertices
    that is \red{not} $\mathcal F$-free. Thus, $T$ is not $\mathcal F'$-free but $T_{m_\mathcal F}$ is
    $\mathcal F'$-free, hence, by \cref{lem:collapse} we conclude that $P_{m_\mathcal F'}$ cannot
    be preserved by the minority operation. The hardness of the $\mathcal F'$-free orientation
    completion problem now follows from \cref{thm:or-comp-classification}, and so, the
    $\mathcal F$-free orientation completion problem is NP-complete even when the input
    is restricted so digraphs with no semicomplete digraph on $m_\mathcal F+1$ vertices.
\end{proof}

We highlight that \cref{thm:or-comp-transitive} provides several instances
of sets $\mathcal{F}$ such that the $\mathcal{F}$-free orientation problem
is in $P$, while the $\mathcal{F}$-free orientation completion problem
is $\NP$-complete. It turns out that in ``almost'' any other case, the
orientation and orientation completion problems are equivalent. 

\begin{theorem}\label{thm:or-eq-or-comp}
    For a finite set of non-empty finite tournament $\mathcal F$ one of the following statements
    holds.
    \begin{enumerate}
        \item $\mathcal F$ contains a transitive tournament and at least one
        tournament with $n_\mathcal F$ vertices is $\mathcal F$-free. In this case, the
        $\mathcal F$-free orientation and the $\mathcal F$-free orientation completion problems
        are polynomial time equivalent.
        \item $\mathcal F$ contains a transitive tournament and all tournaments
        in $\mathcal F$ have at least $n_\mathcal F$ vertices. In this case, the
        $\mathcal F$-free orientation and the $\mathcal F$-free orientation completion problems
        are polynomial time equivalent.
        \item Otherwise, the $\mathcal F$-free orientation  completion problem is NP-complete, and the
        $\mathcal F$-free orientation problem is in P.
    \end{enumerate}
\end{theorem}
\begin{proof}
    In case 1, it follows from~\cref{thm:or-comp-CSP-no-Tk}, that the $\mathcal F$-free
    orientation completion problem an $\Csp(\bB_\mathcal F)$ are polynomial time equivalent. 
    And thus, since $\Csp(\bB_\mathcal F)$ and the $\mathcal F$-free orientation completion
    problem are polynomial-time equivalent, the claim follows. To prove the second statement, 
    notice that if $\mathcal F$ does not contain all tournaments on $n_\mathcal F$ vertices, 
    then we are in case 1. Now, if $\mathcal F$ contains all tournaments on $n_\mathcal F$
    vertices, then the $\mathcal F$-free orientation corresponds to finding $n_\mathcal F$
    complete graphs in an input graph $G$, and the $\mathcal F$-free orientation completion
    problem corresponds to finding semicomplete graphs on $n_\mathcal F$ vertices in the input
    digraph $D$. Thus, both problems are in P, and we conclude that the second statement holds. 
    
    Now we prove the third statement. If $\mathcal F$ does not contain a transitive tournament, 
    the claim follows from \cref{thm:or-comp-transitive}. Otherwise, $\mathcal F$ contains
    a transitive tournament, there is no $\mathcal F$-free tournament on $n_\mathcal F$ vertices,
    and $\mathcal F$ contains some tournament on less than $n_\mathcal F$ vertices. On the one hand, 
    this means that the $\mathcal F$-free orientation problem reduces to determining if the input
    graph is $K_{n_\mathcal F}$-free, and thus it is in P. On the other \red{hand}, if $\mathcal F'$
    is obtained from $\mathcal F$ by removing all tournament on $n_\mathcal F$ vertices, 
    then the $\mathcal F'$-free orientation completion problem in NP-complete by \cref{thm:or-comp-transitive}.
    Moreover, it \red{remains} NP-complete when restricted to input digraphs with no semicomplete
    digraph on $m_{\red{\mathcal F'}}+1$ vertices. This problem is clearly equivalent to the restriction of the
    $\mathcal F$-free orientation completion problem to digraphs with no semicomplete
    digraph on $m_{\red{\mathcal F'}}+1$ vertices. Thus, the (general) $\mathcal F$-free orientation
    completion problem must be NP-complete as well. 
\end{proof}

On the opposite side of the cases \red{considered} in \cref{thm:or-comp-transitive} is the case
where $\mathcal F$ only contains transitive tournaments. In this case, by \cref{thm:or-eq-or-comp},
the $\mathcal F$-free orientation completion and the $\mathcal F$-free orientation problems
are polynomial-time equivalent. Also, notice that these cases boil down to the case when
$\mathcal F = \{T_k\}$ for some integer $k$ (larger forbidden transitive tournaments are redundant).

\begin{theorem}\label{thm:Tk}
    The following statements hold for a positive integer $k\ge 4$.
    \begin{enumerate}
        \item The $T_k$-free orientation completion problem is NP-complete. 
        This problem remains NP-complete when the input is restricted to digraphs with no semicomplete
        subdigraph with $k+1$ vertices.
        \item The $T_k$-free orientation problem is NP-complete. 
        This problem remains NP-complete when the input is restricted to $K_{k+1}$-free graphs.
    \end{enumerate}
\end{theorem}
\begin{proof}
    Clearly, it suffices to prove the second part of statements 1 and 2. 
    Let $\mathcal F_k$ be the set of tournaments that contains $T_k$ and all tournaments
    on $k+1$ vertices. With similar arguments as in the proof of \cref{thm:or-comp-transitive},
    one can notice that the $\mathcal F_k$-free orientation (resp.\ completion) problem is
    polynomial-time equivalent to the $T_k$-free orientation (resp.\ completion) problem when the
    input is restricted to  $K_{k+1}$-free graphs (resp.\ digraphs with no subdigraph with
    $k+1$ vertices). Moreover, by the second statement of~\cref{thm:or-eq-or-comp}, the 
    $\mathcal F_k$-free orientation and the $\mathcal F_k$-free orientation completion problems
    are polynomial-time equivalent. Therefore, to prove the whole theorem, if suffices to
    prove the $\mathcal F_k$-free orientation completion problem is NP-complete for $k\ge 4$.
    To do so, notice that if $D$ is a digraph and $D'$ is obtained from $D$ by adding a universal
    sink, then $D$ can be completed to a  $\mathcal F_k$-free oriented graph if and only if
    $D'$ can be completed to a $\mathcal F_{k+1}$-free oriented graph. Thus, the $\mathcal F_k$-free
    orientation completion problem reduces in polynomial-time to the  $\mathcal F_{k+1}$-free orientation
    completion problem. Hence, we conclude the proof by showing that the $\mathcal F_4$-free orientation
    completion problem is NP-complete. Clearly, $T_4\in \mathcal F_4$ and $TC_4$ is $\mathcal F_4$-free
    so, it follows from~\cref{ex:T4-TC4} that the $\mathcal F_4$-free orientation completion
    problem is NP-complete. Both statements now follow.
\end{proof}


\section{Conclusion and Outlook}


From a structural perspective, a family of graph obstructions to the class of graphs
that admit a $T_3$-free orientation was described in \cite{GuzmanProForbiddenPatterns}.
In light of the hardness of the $T_4$-free orientation problem (\cref{cor:4-vertices-or}),
it might be hard to extend such a description for the class of graphs that admit a $T_4$-free
orientation, but it could be interesting to understand the structure
of graphs that admit a  $\{T_4,TC_4\}$-free orientation, and of those that admit a
$\{T_4,TC_4, C_3^+\}$-free orientation.

From a computational complexity point of view, a first natural extension of this work would be
to classify the complexity of the $\mathcal F$-free orientation (completion) problem
if $\mathcal F$ is any finite set of oriented graphs. In general, this might
not be equivalent to a (possibly infinite) CSP, but if $\mathcal F$ consists of connected oriented graphs, and it is closed
under homomorphisms, then the class of graphs that admit an $\mathcal F$-free
orientation corresponds to the CSP of some (possibly infinite) graph $G$
(see e.g.,~\cite{Book}). 
Such a restriction on the forbidden set $\mathcal F$ is also a particular instance
of the larger class of problems that can be expressed in the logic MMSNP$_2$. 
Some of the techniques we used to classify the computational complexity of the ${\mathcal F}$-free orientation problem might be useful to prove a complexity dichotomy 
for the class of problems  that can be expressed in the logic 
MMSNP$_2$. One would have to overcome the following obstacles.
\begin{itemize}
    \item For problems expressible in MMSNP$_2$, the finite structures we work with instead of $\bB_{\mathcal F}$ and $\bC_{\mathcal F}$ will in general have more than two elements, which means that we cannot use lemmata that explicitly rely on the Schaefer's cases, such as Lemma~\ref{lem:collapse}. 
    \item For the ${\mathcal F}$-free orientation problem, the concept of \emph{force} that we introduced in Section~\ref{sect:dicho-oc} is particularly pleasant, since flipping arguments corresponds to Boolean complementation. 
    The combinatorics of forcing will be more involved in the general case. 
    \item For the structures needed to formulate problems MMSNP$_2$ as CSPs, there is no known generalisation of the result of Kompatscher and Agarval~\cite{AgarwalKompatscher}, which was crucial in our proof. 
\end{itemize}
However, our hope is that a combination of ideas from the present paper with more recent results in the theory of constraint satisfaction, e.g., from~\cite{MottetPinskerSmooth,MottetPinskerCores,BodirskyBodorUIP,MMSNP-Journal} can eventually lead to a proof of a complexity dichotomy for all of MMSNP$_2$. 
\red{After the submission of the present article, 
our result has been reproved with the theory of smooth approximations by Feller and Pinsker~\cite{FellerP} and 
generalised to larger fragments of MMSNP$_2$ by Bitter and Mottet~\cite{BitterM}.}



\subsection*{Acknoweledgements}
\red{We thank the anonymous referees for their valuable suggestions and Roman Feller for pointing out a mistake in a preprint of this article.}  

\bibliographystyle{abbrv}
\bibliography{global.bib}

\end{document}